\documentclass[leqno,12pt]{amsart} 
\setlength{\textheight}{23cm}
\setlength{\textwidth}{16cm}
\setlength{\oddsidemargin}{0cm}
\setlength{\evensidemargin}{0cm}
\setlength{\topmargin}{0cm}
\usepackage{amssymb, amsmath, color, amstext, tikz}
\usepackage[english]{babel}
\usepackage{appendix}
\newtheorem{letterthm}{Theorem}

\newtheorem{theo}{Theorem}[section]

\newtheorem{prop}[theo]{Proposition}
\theoremstyle{definition} 
\newtheorem{defi}[theo]{Definition}
\newtheorem{remark}[theo]{Remark}

\newcommand{\AN}{almost normal}
\newcommand{\BGPA}{bipartite graph planar algebra}
\newcommand{\biha}{Bisch-Haagerup}

\newcommand{\CE}{conditional expectation}
\newcommand{\CrP}{crossed product}
\newcommand{\copo}{completely positive}

\newcommand{\FPS}{fixed point space}
\newcommand{\FPPA}{fixed point planar algebra}
\newcommand{\HS}{Hilbert space}
\newcommand{\Hepa}{Hecke pair}
\newcommand{\IFF}{if and only if}

\newcommand{\nofa}{normal faithful}
\newcommand{\NFS}{normal faithful state}
\newcommand{\NFTS}{normal faithful tracial state}
\newcommand{\ONB}{orthonormal basis}
\newcommand{\PA}{planar algebra}
\newcommand{\pode}{positive definite}

\newcommand{\SF}{subfactor}
\newcommand{\SPA}{subfactor planar algebra}
\newcommand{\stre}{$*$-representation}
\newcommand{\SEA}{symmetric enveloping algebra}
\newcommand{\SEI}{symmetric enveloping inclusion}
\newcommand{\SI}{standard invariant}
\newcommand{\SMU}{system of matrix units}
\newcommand{\SOT}{strong operator topology}
\newcommand{\SR}{system of representatives}
\newcommand{\TFAT}{The following assertions are true.}
\newcommand{\TLJ}{Temperley-Lieb-Jones}
\newcommand{\tII}{type II$_1$}
\newcommand{\VNA}{von Neumann algebra}
\newcommand{\WRT}{with respect to}
\newcommand{\Aut}{\text{Aut}}
\newcommand{\AutG}{\text{Aut}(\Ga,\mu)}
\newcommand{\AutP}{\text{Aut}(\Pl)}
\newcommand{\act}{\curvearrowright}
\newcommand{\CAGH}{\C[A;G,H]}
\newcommand{\C}{\mathbb C}
\newcommand{\ep}{\epsilon}
\newcommand{\gO}{\geqslant 0}
\newcommand{\ga}{\gamma}
\newcommand{\Ga}{\Gamma}
\newcommand{\Gogo}{ {G_{o,go} } }
\newcommand{\GGG}{\langle G_o\backslash G/G_o\rangle}
\newcommand{\GrkPb}{Gr_k\Pl\boxtimes Gr_k\Pl}
\newcommand{\J}{\mathcal J}

\newcommand{\lGHr}{\langle G/H\rangle}

\newcommand{\lGGor}{\langle G/G_o\rangle}
\newcommand{\GGo}{\langle G/G_o\rangle}
\newcommand{\lGGogor}{ \langle G / \Gogo \rangle }

\newcommand{\loriar}{\longrightarrow}
\newcommand{\mH}{\mathcal H}
\newcommand{\mN}{\mathcal N}
\newcommand{\mM}{\mathcal M}
\newcommand{\NM}{{N\subset M}}

\newcommand{\ootimes}{\bar\otimes}
\newcommand{\op}{\text{op}}
\newcommand{\cP}{\mathcal P}
\newcommand{\Pl}{\mathcal P}
\newcommand{\Ql}{\mathcal Q}
\newcommand{\R}{\mathbb R}
\newcommand{\spann}{\text{Span}}
\newcommand{\TS}{{T\subset S}}
\newcommand{\UP}{U(\Pl_1^+)}
\newcommand{\Ve}{V^\epsilon}
\newcommand{\vNAGH}{\text{vN}[A;G,H]}
\newcommand{\vNTGGo}{\text{vN}[T(o);G,G_o]}

\newcommand{\WOT}{weak operator topology}

\newcommand{\diagxdag}{\begin{tikzpicture}[baseline = .425cm]\draw (.25,.25)--(.75,.25)--(.75,.75)--(.25,.75)--(.25,.25);\draw (.5,0)--(.5,.25);\draw (.5,.75)--(.5,1);\draw (0,.5)--(.25,.5);\draw (.75,.5)--(1,.5);\node at (.5,.5) { $x^*$ };\node at (.85,.85) {\tiny\$};\end{tikzpicture}}

\newcommand{\diagthree}{\begin{tikzpicture}[baseline=-.125cm]\draw (-.25,-.25)--(.25,-.25)--(.25,.25)--(-.25,.25)--(-.25,-.25);\draw (-.25,.5)--(.25,.5);\draw (-.25,-.5)--(.25,-.5);\draw (-.25,-.125) arc (90 : 270 : .1875);\draw (-.25,.5) arc (90 : 270 : .1875);\draw (.25,.125) arc (-90 : 90 : .1875);\draw (.25,-.5) arc (-90 : 90 : .1875);\node at (0,0) { $x$ };\node at (.5,.375) {{ \scriptsize{k} }};\node at (.5,-.375) {{ \scriptsize{k} }};\end{tikzpicture}}

\newcommand{\diagtaul}{\begin{tikzpicture}[baseline=-.1cm]\draw (-.25,-.25)--(.25,-.25)--(.25,.25)--(-.25,.25)--(-.25,-.25);\draw (0,.25) arc (0 : 180 : .25);\draw (-.5,-.25) arc (-180 : 0 : .25);\node at (0,0) { $x$ };\draw (-.5,-.25)--(-.5,.25);\end{tikzpicture}}
\newcommand{\diagtaur}{\begin{tikzpicture}[baseline=-.1cm]\draw (-.25,-.25)--(.25,-.25)--(.25,.25)--(-.25,.25)--(-.25,-.25);\draw (.5,.25) arc (0 : 180 : .25);\draw (0,-.25) arc (-180 : 0 : .25);\node at (0,0) { $x$ };\draw (.5,-.25)--(.5,.25);\end{tikzpicture}}
\newcommand{\diagEP}{\frac{1}{\delta}\begin{tikzpicture}[baseline=-.1cm]\draw (-.25,-.25)--(.25,-.25)--(.25,.25)--(-.25,.25)--(-.25,-.25);\draw (.625,.25) arc (0 : 180 : .25);\draw (.125,-.25) arc (-180 : 0 : .25);\node at (0,0) { $x$ };\draw (.625,-.25)--(.625,.25);\draw (-.125,.25)--(-.125,.5);\draw (-.125,-.25)--(-.125,-.5);\node at (-.25,.5) {{ \tiny n }};\node at (-.25,-.5) {{ \tiny n }};\end{tikzpicture}}

\newcommand{\diagpkv}{p_v=\begin{tikzpicture}[baseline=-.1cm]\draw (-.25,-.25)--(.25,-.25)--(.25,.25)--(-.25,.25)--(-.25,-.25);\draw (-.5,-.5)--(.5,-.5);\draw (-.5,.5)--(.5,.5);\node at (0,.6) {{ \scriptsize{k} }};\node at (0,-.6) {{ \scriptsize{k} }};\node at (0,0) { $e_v$ };\end{tikzpicture}=j_k(0,0)(e_v)\in S_k.}
\newcommand{\diagxl}{x_l=\begin{tikzpicture}[baseline=-.1cm]\draw (-.5,-.25)--(.5,-.25)--(.5,.25)--(-.5,.25)--(-.5,-.25);\draw (0,.25)--(0,.5);\draw (0,-.25)--(0,-.5);\node at (0,0) { $e_{a,b}$ };\node at (.15,.5) {{ \scriptsize{2n} }};\node at (.15,-.5) {{ \scriptsize{2m} }};\end{tikzpicture}=\begin{tikzpicture}[baseline=-.1cm]\draw (.75,-.25)--(1.25,-.25)--(1.25,.25)--(.75,.25)--(.75,-.25);\draw (-.75,-.25)--(-1.25,-.25)--(-1.25,.25)--(-.75,.25)--(-.75,-.25);\draw (-.5,-.25)--(.5,-.25)--(.5,.25)--(-.5,.25)--(-.5,-.25);\draw (0,.25)--(0,.5);\draw (0,-.25)--(0,-.5);\node at (0,0) { $e_{a,b}$ };\node at (.15,.5) {{ \scriptsize{2n} }};\node at (.15,-.5) {{ \scriptsize{2m} }};\node at (-1,0) { $e_v$ };\node at (1,0) { $e_v$ };\end{tikzpicture}=\begin{tikzpicture}[baseline=-.1cm]\draw (-.5,.125)--(.5,.125)--(.5,.625)--(-.5,.625)--(-.5,.125);\draw (-.5,-.125)--(.5,-.125)--(.5,-.625)--(-.5,-.625)--(-.5,-.125);\draw (0,.625)--(0,.875);\draw (0,-.625)--(0,-.875);\node at (.15,.875) {{ \scriptsize{2n} }};\node at (.15,-.875) {{ \scriptsize{2m} }};\node at (0,.375) { $e_{a_1,b_1}$ };\node at (0,-.375) { $e_{a_2,b_2}$ };\end{tikzpicture}=x_{l_1}\otimes x_{l_2}.}

\newcommand{\diagWtn}{\begin{tikzpicture}[baseline=0cm]\draw (-.5,.-.25)--(.5,-.25)--(.5,.25)--(-.5,.25)--(-.5,-.25);\draw (-.25,.25)--(-.25,.5);\draw (.35,.25) arc (0 : 180 : .175);\node at (-.55,.4) {\tiny{2n-2}};\node at (0,0) { $x$ };\end{tikzpicture}=
\begin{tikzpicture}[baseline=0cm]\draw (-.5,.-.25)--(.5,-.25)--(.5,.25)--(-.5,.25)--(-.5,-.25);\draw (.25,.25)--(.25,.5);\draw (0,.25) arc (0 : 180 : .175);\node at (.55,.4) {\tiny{2n-2}};\node at (0,0) { $x$ };\end{tikzpicture}=0}

\newcommand{\diagWbm}{\begin{tikzpicture}[baseline=0cm]\draw (-.5,.-.25)--(.5,-.25)--(.5,.25)--(-.5,.25)--(-.5,-.25);\draw (-.25,-.25)--(-.25,-.5);\draw (.35,-.25) arc (0 : -180 : .175);\node at (-.55,-.4) {\tiny{2m-2}};\node at (0,0) { $x$ };\end{tikzpicture}=
\begin{tikzpicture}[baseline=0cm]\draw (-.5,.-.25)--(.5,-.25)--(.5,.25)--(-.5,.25)--(-.5,-.25);\draw (.25,-.25)--(.25,-.5);\draw (0,-.25) arc (0 : -180 : .175);\node at (.55,-.4) {\tiny{2m-2}};\node at (0,0) { $x$ };\end{tikzpicture}=0}
\newcommand{\diagWtnbm}{
\begin{tikzpicture}[baseline=0cm]\draw (-.5,.-.25)--(.5,-.25)--(.5,.25)--(-.5,.25)--(-.5,-.25);\draw (-.25,.25)--(-.25,.5);\draw (0,-.25)--(0,-.5);\draw (.35,.25) arc (0 : 180 : .175);\node at (-.55,.4) {\tiny{2n-2}};\node at (0,0) { $x$ };\end{tikzpicture}=
\begin{tikzpicture}[baseline=0cm]\draw (-.5,.-.25)--(.5,-.25)--(.5,.25)--(-.5,.25)--(-.5,-.25);\draw (.25,.25)--(.25,.5);\draw (0,-.25)--(0,-.5);\draw (0,.25) arc (0 : 180 : .175);\node at (.55,.4) {\tiny{2n-2}};\node at (0,0) { $x$ };\end{tikzpicture}=
\begin{tikzpicture}[baseline=0cm]\draw (-.5,.-.25)--(.5,-.25)--(.5,.25)--(-.5,.25)--(-.5,-.25);\draw (.25,-.25)--(.25,-.5);\draw (0,.25)--(0,.5);\draw (0,-.25) arc (0 : -180 : .175);\node at (.55,-.4) {\tiny{2m-2}};\node at (0,0) { $x$ };\end{tikzpicture}=\begin{tikzpicture}[baseline=0cm]\draw (-.5,.-.25)--(.5,-.25)--(.5,.25)--(-.5,.25)--(-.5,-.25);\draw (-.25,-.25)--(-.25,-.5);\draw (0,.25)--(0,.5);\draw (.35,-.25) arc (0 : -180 : .175);\node at (-.55,-.4) {\tiny{2m-2}};\node at (0,0) { $x$ };\end{tikzpicture}=0
}
\newcommand{\diagdblecup}{\begin{tikzpicture}[baseline=0cm]\draw (-.2,0) arc (0 : 180 : .2);\draw (0,0) arc (0 : 180 : .4);\end{tikzpicture}}
\newcommand{\diagdblecap}{\begin{tikzpicture}[baseline=-.35cm]\draw (-.2,0) arc (0 : -180 : .2);\draw (0,0) arc (0 : -180 : .4);\end{tikzpicture}}


\newcommand{\diagmultxy}{xy = \sum_{a=0}^{ \min(2n,2i) } \sum_{ b=0 }^{ \min(2m,2j) }\begin{tikzpicture}[baseline = .425cm]
\draw (.25,.25)--(.75,.25)--(.75,.75)--(.25,.75)--(.25,.25);
\draw (1,.25)--(1.5,.25)--(1.5,.75)--(1,.75)--(1,.25);
\draw (0,.5)--(.25,.5);
\draw (.75,.5)--(1,.5);
\draw (1.5,.5)--(1.75,.5);
\draw (.375,0)--(.375,.25);
\draw (.375,.75)--(.375,1);
\draw (1.375,0)--(1.375,.25);
\draw (1.375,.75)--(1.375,1);
\draw (.625,.75) arc (180 : 0 : .25);
\draw (.625,.25) arc (-180 : 0 : .25);
\node at (.5,.5) { $x$ };
\node at (1.25,.5) { $y$ };
\node at (-.25,.5) {\tiny{2k} };
\node at (2,.5) {\tiny{2k} };
\node at (.875,1.125) {\tiny{a} };
\node at (.875,-.125) {\tiny{b} };
\end{tikzpicture}}

\begin{document}

\title{On fixed point planar algebras}
\maketitle
\begin{center}
{\sc by Arnaud Brothier\footnote{Department of Mathematics, University of Rome Tor Vergata, Via della Ricerca Scientifica, 1 - 00133 Roma, Italy, arnaud.brothier@gmail.com, https://sites.google.com/site/arnaudbrothier/}}
\end{center}

\begin{abstract}\noindent
To a weighted graph  can be associated a \BGPA\ $\Pl$.
We construct and study the \SEI\ of $\Pl.$
We show that this construction is equivariant with respect to the automorphism group of $\Pl.$
We consider subgroups $G$ of the automorphism of $\cP$ such that the $G$-fixed point space $\Pl^G$ is a \SPA.
As an application we show that if $G$ is amenable, then $\Pl^G$ is amenable as a \SPA.
We define the notions of a cocycle action of a \Hepa\ on a tracial \VNA\ and the corresponding \CrP.
We show that a large class of \SEI s of \SPA s can be described by such a \CrP.
\end{abstract}

\section*{Introduction and main results}

The theory of \SF s has been initiated by Jones \cite{Jones_index_for_subfactors}.
Given a \SF\ (an extremal unital inclusion of factors of \tII\ with finite index), Jones associated a combinatorial invariant called the \SI.
It has been axiomatized in the finite depth case as a paragroup by Ocneanu \cite{Ocneanu_quant_group_string_galois}.
Then, it has been axiomatized in the general case as a $\lambda$-lattice by Popa \cite{popa_system_construction_subfactor} and as a \SPA\ by Jones \cite{jones_planar_algebra}.
The reconstruction theorem of Popa shows that any $\lambda$-lattice is the \SI\ of a \SF, see \cite{Popa_Markov_tr_subfactors,popa_system_construction_subfactor,Popa_univ_constr_subfactors}.

Popa studied analytic properties of \SF s and among other defined the notion of amenability in this context \cite{Popa_classification_subfactors_amenable}. 
He gave many characterizations of amenability and defined it for $\lambda$-lattices and thus for \SPA s.
He constructed the so-called \SEI\ $\TS$ associated to a \SF\ $\NM$ \cite{Popa_94_Sym_env_alg} which extends constructions due to Ocneanu, and Longo and Rehren \cite{Ocneanu_quant_group_string_galois,Longo_Rehren_95_Nets_sf}.
The \SEI\ is a \SF\ of type II$_1$ and has finite index if and only if $\NM$ has finite depth.
Let $\Pl$ be the \SPA\ of a \SF\ $\NM$. 
Popa proved in \cite{Popa_symm_env_T} that $\Pl$ is amenable \IFF\ the \SEI\ of $\NM$ is co-amenable, see Section \ref{sec:amenability}.

Given a \SPA\ $\Ql$, one can construct a tower of II$_1$ factors $M_0\subset M_1\subset M_2\subset\cdots$ and a sequence of \SEI s $T_k\subset S_k, k\gO$ \cite{GJS_1,CJS}.
It has been proven that the \SI\ of $M_0\subset M_1$ is the \SPA\ $\Ql$ and the \SEI\ of $M_{k-1}\subset M_k$ is isomorphic to the inclusion $T_k\subset S_k$ for any $k\geqslant 1.$
We call $T_0\subset S_0$ the \SEI\ associated to $\Ql$.

Given a bipartite graph $\Ga$ and a weight $\mu$ on its edges satisfying certain assumptions we can construct a \PA\ $\Pl$ called a \BGPA\ \cite{Jones_planar_alg_bip_graph,Burstein_BGPA}. See also \cite{Morrison_Walker_embedding}.
The automorphism group $\Aut(\cP)$ of $\cP$ is isomorphic to a semi-direct product $U(\Pl_1^+)\rtimes \Aut(\Ga,\mu)$ where $\Aut(\Ga,\mu)$ is the group of automorphisms of $\Ga$ that preserves the weight $\mu$ and $U(\Pl_1^+)$ is the unitary group of the 1-box space $\Pl_1^+$.
The group $\Aut(\Pl)$ acts on the vertices $V=V^+\cup V^-$ of $\Ga$ where $U(\Pl_1^+)$ acts trivially.
If $G<\Aut(\cP)$ is any subgroup, then the \FPS\ $\Pl^G$ is a \PA. 
If $G$ acts transitively on the even and odd vertices of $\Ga$, then $\Pl^G$ satisfies automatically all the axioms of a \SPA\ except sphericality. 
In particular we obtain that $\Pl^G$ is non-degenerate which is usually the hardest axiom to check for a \SPA, see Section \ref{sec:preliminaries} for definitions.
Note that starting from a bipartite graph $\Ga$ and a vertex-transitive group action $G\act \Ga$, there exists a unique weight $\mu$ such that we can associate to $(\Ga,\mu)$ a \BGPA\ $\Pl$ and such that the \FPS\ $\Pl^G$ is a \SPA, see \cite[Proposition 2.5]{Arano_Vaes_SF}.

In this article, we consider \BGPA s $\Pl$ and their planar subalgebras $\Ql\subset\Pl$.
We investigate the structure of $\Ql$, its \SEI, and the notion of amenability when $\Ql$ is a \SPA\ embedded in $\Pl.$
We start by proving that $\Ql$ is non-amenable if $\Vert\Ga\Vert<\delta$, where $\delta$ is the modulus of $\Ql$.
The proof is elementary and is independent from the rest of the paper.
We extend the construction of \cite{GJS_1,CJS} and associate to a \BGPA\ $\Pl$ a tower of \VNA s $M_0\subset M_1\subset M_2\subset\cdots$ and a sequence of inclusions $T_k\subset S_k, k\gO$.
Note that the construction of the tower was already explained in \cite[Section 4]{GJS_1}.
We prove that $M_i$ and $T_k$ are \VNA s of type II$_1$ with atomic centers and that $S_k$ is a factor of type II for any $i,k\gO.$
We show that the automorphism group $\AutP$ of $\Pl$ acts in a minimal way on those \VNA s and show  that the constructions of $M_i$ and $S_k$ behaves as expected \WRT\ inclusions of planar algebras and group actions.
In particular, if $G<\AutP$ is a subgroup such that $\Pl^G$ is a \SPA, then the inclusion of fixed point spaces $M_0^G\subset M_1^G$ is a \SF\ with \SI\ isomorphic to $\Pl^G$ and the \SEI\ of this \SF\ (resp. of $\Pl^G$) is isomorphic to $M_1^G\vee (M_1^\op)^G\subset S_1^G$ (resp. $M_0^G\vee (M_0^\op)^G\subset S_0^G$).
As an application we prove the following theorem.

\begin{letterthm}\label{theo:introone}
Consider a \BGPA\ $\Pl$ over a weighted graph $(\Ga,\mu)$ and a subgroup $G<\AutP$ such that $\Pl^G$ is a \SPA.
If the group $G$ is amenable (as closed subgroup or a countable discrete group), then the \SPA\ $\Pl^G$ is amenable.
\end{letterthm}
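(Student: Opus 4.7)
The plan is to invoke the Popa characterization recalled in the introduction: the \SPA\ $\Pl^G$ is amenable if and only if its \SEI\ is co-amenable. By the $\AutP$-equivariance of the tower and \SEI\ constructions established in the preceding sections, this \SEI\ is isomorphic to $T_0^G := M_0^G \vee (M_0^\op)^G \subset S_0^G$, the $G$-fixed point subalgebra of the ambient inclusion $T_0 := M_0 \vee M_0^\op \subset S_0$ associated to the \BGPA\ $\Pl$ itself. It therefore suffices to produce a $T_0^G$-hypertrace on $S_0^G$, meaning a tracial state on $S_0^G$ which is $T_0^G$-central.

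The first step is to construct a $T_0$-hypertrace $\varphi$ on the ambient $S_0$ at the \BGPA\ level. This is where the standing hypothesis $\delta=\Vert\Ga\Vert$ enters: the weight $\mu$ giving rise to $\Pl$ is, up to normalisation, the Perron-Frobenius eigenvector of the adjacency operator of $\Ga$ with eigenvalue $\delta$, so the bipartite graph $\Ga$ is amenable in the combinatorial sense and $\ell^2(V)$ carries a sequence of approximately invariant unit vectors for the adjacency operator. Feeding such a sequence into the GJS/CJS-type presentation of $S_0$ produces an approximately $T_0$-central sequence of positive unit-trace elements of $S_0$; any weak-$*$ limit point of the associated states is the desired hypertrace.

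The second step uses the amenability of $G$ to transfer $\varphi$ to the fixed-point subalgebra. Since the tower construction is $G$-equivariant and preserves the trace, $G$ acts on the non-empty convex set of $T_0$-hypertraces on $S_0$. Whether $G$ is a countable discrete group or a closed subgroup of $\AutP$ for the topology of pointwise convergence on $\Pl$, amenability supplies an invariant mean, and the standard barycentre argument yields a $G$-invariant $T_0$-hypertrace $\tilde\varphi$ on $S_0$. Its restriction to $S_0^G$ is then a $T_0^G$-hypertrace on $S_0^G$, giving the co-amenability of the \SEI\ and hence the amenability of $\Pl^G$.

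The main obstacle I anticipate is the first step. The inclusion $T_0 \subset S_0$ is not a factor inclusion in the \BGPA\ framework, so Popa's existing hypertrace constructions for amenable \SF s cannot be cited as a black box; one has to combine the graph-theoretic amenability of $(\Ga,\mu)$ with the explicit diagrammatic model of $S_0$ to produce a genuine state (rather than merely a weight) and to verify $T_0$-centrality. Once this is done, the $G$-averaging in the second step is essentially formal, and handling amenability in both the discrete and closed-subgroup cases is routine.
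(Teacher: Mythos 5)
Your overall strategy (reduce to co-amenability of the \SEI\ via Popa's theorem, then exploit amenability of $G$) points in the right direction, but the proposal has genuine gaps. The most serious is the reduction itself: you propose to produce ``a tracial state on $S_0^G$ which is $T_0^G$-central.'' Since $S_0^G$ is a II$_1$ factor, its unique normal tracial state is automatically central for every subalgebra, so the object you are after exists trivially and cannot characterize co-amenability. A hypertrace witnessing co-amenability of $T_0^G\subset S_0^G$ must live on a larger algebra --- the basic construction $\langle S_0^G, e_{T_0^G}\rangle$, or $B(L^2(S_0^G))\cap (J T_0^G J)'$ --- and be $S_0^G$-central there while restricting to the trace on $S_0^G$. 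As written, both of your steps are aimed at a vacuous target. A second gap: the ``standing hypothesis $\delta=\Vert\Ga\Vert$'' does not appear in the statement; the theorem assumes only that $\Pl^G$ is a \SPA\ and that $G$ is amenable. The equality $\Vert\Ga\Vert=\delta$ is at best a consequence one could extract a posteriori (a Kesten-type statement), but your argument uses it as an input without deriving it, and deriving it from amenability of $G$ is itself nontrivial. Third, you silently identify $(M_0\vee M_0^\op)^G$ with $M_0^G\vee(M_0^\op)^G$; these are different algebras in general, and bridging them is precisely where the amenability of the stabilizer $G_o$ must be used.

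For comparison, the paper's proof avoids hypertraces entirely and works with the approximation form of co-amenability. It factors the \SEI\ as $M^G\vee(M^\op)^G\subset T^G\subset S^G$. For the upper inclusion, amenability of $G$ gives co-amenability of the \AN\ stabilizer $G_o<G$, hence a sequence of positive definite $G_o$-bi-invariant functions $f_n$ on $G$, supported on finitely many cosets and converging pointwise to $1$; Proposition \ref{prop:maps} turns each $f_n$ into a \copo\ trace-preserving $T^G$-bimodular map $\psi_{f_n}$ on $S^G$ with bifinite image, using the double-coset decomposition of $L^2(S^G,tr)$ from Proposition \ref{prop:bimodules}. For the lower inclusion, the isomorphism of $M^G\vee(M^\op)^G\subset T^G$ with $T(o)^{G_o\times G_o}\subset T(o)^{G_o}$ together with co-amenability of the diagonal $G_o<G_o\times G_o$ (Monod--Popa) does the job, and the two co-amenable inclusions are composed. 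If you wish to salvage a hypertrace-style argument you would have to reconstruct essentially these same ingredients, so I recommend redoing the proof along the double-coset and positive-definite-function route.
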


Note that a more general framework has been studied independently by Arano and Vaes from which this theorem follows \cite{Arano_Vaes_SF}.
Recall that a \Hepa\ $(G,H)$ is an inclusion of groups $H<G$ which is \AN, i.e. for any $g\in G$ the group $H\cap gHg^{-1}$ has finite index inside $H$ and $gHg^{-1}.$
We define the notion of a cocycle action of a \Hepa\ on a tracial \VNA\ and the corresponding twisted \CrP\ \VNA.
Note that it has been considered already in the framework of ordinary action on $C^*$-algebras by Palma, see \cite{Palma_HP_I,Palma_HP_II}.
We show that if  $\mu$ is constant on the set of even edges and $G<\AutP$ is a subgroup such that $\Pl^G$ is a \SPA, then the \SEI\ of $\Pl^G$ can be described in the following way.

\begin{letterthm}\label{theo:introtwo}
Consider a \BGPA\ $\Pl$ over a weighted graph $(\Ga,\mu)$ such that $\mu$ is constant on the set of even edges and a subgroup $G<\AutP$.
Assume that $\Pl^G$ is a \SPA.
Denote by $G_o$ the subgroup of $G$ that fixes an even vertex $o$ of $\Ga.$
There exists a II$_1$ factor $A$ and a cocycle action of the \Hepa\ $(G,G_o)$ on $A\ootimes A^\op$ such that $G_o$ acts on $A$ and such that the \SEI\ of $\Pl^G$ is isomorphic to the inclusion $A^{G_o}\ootimes (A^\op)^{G_o}\subset \text{vN}[A\ootimes A^\op;G,G_o]$, where $\text{vN}[A\ootimes A^\op;G,G_o]$ is a  twisted \CrP\ of $A\ootimes A^\op$ by the \Hepa\ $(G,G_o).$ 
\end{letterthm}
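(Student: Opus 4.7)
The plan is to exploit the structural description of the tower stated earlier in the paper: $M_0$ is a tracial \VNA\ with atomic center whose minimal projections $\{p_v:v\in V^+\}$ are indexed by the even vertices of $\Ga$, and $\AutP$ acts minimally on $M_0$ permuting the $p_v$'s as it permutes $V^+$. Since $\Pl^G$ is a \SPA, the zero-box space $(\Pl^G)_0^+$ is one-dimensional, which forces $G$ to act transitively on $V^+$. I would therefore fix an even vertex $o$, set $A:=p_oM_0p_o$ (a II$_1$ factor by the structural results), and note that $G_o$ acts on $A$ by restriction of the action of $G$ on $M_0$. The hypothesis that $\mu$ is constant on the set of even edges is exactly what ensures that the trace of each $p_v$ in the canonical trace of $M_0$ is independent of $v$, so that the $G$-equivariant isomorphisms $p_vM_0p_v\simeq A$ are trace preserving and one obtains a trace-preserving identification $M_0\simeq A\ootimes\ell^\infty(G/G_o)$, and analogously $M_0^\op\simeq A^\op\ootimes\ell^\infty(G/G_o)$.

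Next I would verify that $(G,G_o)$ is a \Hepa. Since $\AutP$ preserves the locally finite graph $\Ga$, for every $v\in V^+$ the $G_o$-orbit of $v$ is contained in the finite sphere of vertices at graph distance $d(o,v)$ from $o$, hence is finite. Applying this with $v=g\cdot o$ gives $[G_o:G_o\cap gG_og^{-1}]<\infty$ for every $g\in G$, which is the \AN\ condition. Choosing a \SR\ $\{g_v:v\in V^+\}$ of $G/G_o$ with $g_o=e$, I would then define partial maps $\alpha_g:A\ootimes A^\op\to A\ootimes A^\op$ for $g\in G$ by transporting the corner $(p_o\otimes p_o)(M_0\ootimes M_0^\op)(p_o\otimes p_o)$ to the $(g\cdot o,g\cdot o)$-corner via the natural action of $g$ on $M_0\ootimes M_0^\op$, then back to the $(o,o)$-corner via $g_{g\cdot o}^{-1}\otimes g_{g\cdot o}^{-1}$. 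The failure of $\alpha_{gh}=\alpha_g\alpha_h$ is measured by an element of $G_o$ acting diagonally on $A\ootimes A^\op$, which is exactly the cocycle datum of a cocycle action of the \Hepa\ $(G,G_o)$.

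With these ingredients in hand I would identify both sides of the \SEI. For the small algebra, a $G$-invariant element of $M_0\simeq A\ootimes\ell^\infty(G/G_o)$ is determined by its value at the $o$-component, which must be $G_o$-invariant, so $M_0^G\simeq A^{G_o}$ and likewise $(M_0^\op)^G\simeq(A^\op)^{G_o}$; since the images of $M_0$ and $M_0^\op$ commute in $S_0$, their join is a tensor product, giving $M_0^G\vee(M_0^\op)^G\simeq A^{G_o}\ootimes (A^\op)^{G_o}$. For the large algebra, $S_0$ is generated by $M_0\vee M_0^\op$ together with the system of Jones projections produced by the \BGPA\ construction; these projections decompose as sums over the vertex set, and after restricting to $G$-fixed points and rewriting everything in terms of the trivialization $M_0\simeq A\ootimes\ell^\infty(G/G_o)$, one recovers exactly the generators and relations defining the twisted \Hepa\ \CrP\ $\text{vN}[A\ootimes A^\op;G,G_o]$ with coefficient algebra $A\ootimes A^\op$ and cocycle as in the previous paragraph.

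The main obstacle is this last step: matching the \BGPA\ presentation of $S_0^G$ with the formal definition of the twisted \Hepa\ \CrP. Because $(G,G_o)$ is only a \Hepa\ and not a group, the \CrP\ is a Schlichting-type object and the cocycle action is only defined relative to a transversal; producing the isomorphism at the purely algebraic level is a bookkeeping exercise in Jones projections versus double-coset functions, but upgrading it to a normal $*$-isomorphism of \VNA s requires a separate argument. A clean route is to exhibit a canonical faithful trace on each side, check agreement on a common weakly dense $*$-subalgebra, and appeal to the uniqueness of the \SEI\ up to trace-preserving isomorphism to conclude.
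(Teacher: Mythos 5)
Your skeleton is broadly right ($A=p_oM_0p_o$, the \Hepa\ property of $G_o<G$ from local finiteness of $\Ga$, and the computation of the small algebra $M_0^G\vee(M_0^\op)^G\simeq A^{G_o}\ootimes(A^\op)^{G_o}$ all match the paper), but the central step --- producing the cocycle action and identifying $S_0^G$ with the twisted \CrP\ --- has a genuine gap. Your $\alpha_g$ transports the $(o,o)$-corner to the $(go,go)$-corner by $\sigma_g$ and back by $\sigma_{g_{go}}^{-1}$, so $\alpha_g=\sigma_k|_{T(o)}$ for some $k\in G_o$, and the defect $\alpha_{gh}^{-1}\alpha_g\alpha_h$ is again $\sigma_{k'}|_{T(o)}$ for some $k'\in G_o$, i.e.\ a (generally outer) \emph{automorphism} of $A\ootimes A^\op$. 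That is an automorphism-valued anomaly, not the unitary-valued map $u:G\times G/G_o\times G/G_o\loriar U(A)$ that the definition of a cocycle action of a \Hepa\ requires; without unitaries the multiplication on $\C[A\ootimes A^\op;G,G_o]$ cannot even be written down. The paper obtains genuine unitaries by a different move, and this is exactly where the hypothesis on $\mu$ enters (not, as you say, to make abstract isomorphisms of II$_1$ factors trace-preserving, which is automatic): $\mu$ constant on even edges forces $Tr(p_v)=Tr(p_o)$ for all $v\in V^+$, hence, $S_0$ being a factor, the $p_v$ are Murray--von Neumann equivalent and there is a \SMU\ $\{\ep_{v,w}\}\subset S_0$ lifting the minimal central projections of $T_0$. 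This trivializes $S_0\simeq T(o)\ootimes B(\ell^2(V^+))$ and $T_0\simeq T(o)\ootimes\ell^\infty(V^+)$ (Theorem \ref{theo:SEIP}); transporting $\sigma_g$ through the trivialization sends $1\otimes e_{v,w}$ to a partial isometry of the form $u_{g,v,w}\otimes e_{gv,gw}$ with $u_{g,v,w}\in U(T(o))$, and these are the cocycle. Proposition \ref{prop:twisted} then identifies $(T(o)\ootimes B(\ell^2(V^+)))^G$ with $\vNTGGo$. Your plan of matching ``Jones projections versus double-coset functions'' never produces these unitaries.

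Two further points are assumed without justification. First, that the \SEI\ of $\Pl^G$ is the inclusion $M_0^G\vee(M_0^\op)^G\subset S_0^G$ at all: this is Proposition \ref{prop:fixedpoint}, whose proof requires showing $S_0^G=S_0(\Ql)$ (the fixed-point \VNA\ equals the \VNA\ generated by the fixed-point \PA), a nontrivial $L^2$-argument; without it, computing fixed points of the block decomposition says nothing about the \SEI\ of $\Ql$. Second, the closing appeal to ``uniqueness of the \SEI\ up to trace-preserving isomorphism'' is not an available tool: one must exhibit an isomorphism of \emph{inclusions}, and matching traces on weakly dense subalgebras of the two large algebras does not by itself carry the small algebra to the small algebra. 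In the paper both issues are resolved at once by the explicit trivialization $\phi$ built from the \SMU.
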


See Theorem \ref{theo:SEIQ} for a slightly more precise statement.
Many \SEI\ of \SF s can be described in that way including diagonal and Bisch-Haagerup \SF s, see Section \ref{sec:examples}.
This theorem can be interpreted as an extension of a theorem of Popa which shows that the \SEA\ $S$ of a diagonal \SF\ is the twisted \CrP\ of a \VNA\ by a group \cite[Section 3]{Popa_classification_subfactors_amenable}.

\section{Preliminaries and a criterion of non-amenability}\label{sec:preliminaries}

A \PA\ is a collection of complex $*$-algebras $\Pl=(\Pl_n^\pm:n\gO)$ on which the operad of shaded planar tangles acts.
See \cite{jones_planar_algebra,jones_planar_algebra_II} for more details.
We follow similar conventions to \cite{CJS} for drawing a shaded planar tangle.
We decorate strings with natural numbers to indicate that they represent a given number of parallel strings.
The distinguished interval of a box is decorated by a dollar sign if it is not at the top left corner.
We do not draw the outside box and will omit unnecessary decorations.
The left and right traces of a \PA\ are the maps $\tau_l:\Pl_n^\pm\loriar\Pl_0^\mp$ and $\tau_r:\Pl_n^\pm\loriar\Pl_0^\pm$ defined for any $n\gO$ such that 
$$\tau_l(x)=\diagtaul \text{ and } \tau_r(x)=\diagtaur \text{ for any } x\in \Pl_n^\pm.$$
Suppose that $\Pl_0^\pm=\C.$
The \PA\ is called spherical if the two traces agree on each element of $\Pl.$
We say that $\Pl$ is non-degenerate if the sesquilinear forms $(x,y)\mapsto \tau_l(xy^*)$ and $(x,y)\mapsto \tau_r(xy^*)$ are positive definite.
A \SPA\ is a \PA\ such that each space $\Pl_n^\pm$ is finite dimensional, $\Pl_0^\pm=\C$, $\Pl$ is spherical, and is non-degenerate.
The modulus of a \SPA\ is the value of a closed loop.
The index of a \SPA\ is the square of its modulus.
Note that all the \SF s considered in this article are extremal. 
This condition corresponds to the sphericality of their associated \SPA.
`
\subsection{Bipartite graph planar algebras}

We refer to \cite{Jones_planar_alg_bip_graph} and \cite{Burstein_BGPA} for more details about \BGPA s of finite and infinite graphs respectively.
Let $\Ga$ be a countable locally finite undirected connected bipartite graph that can have multiple edges between two vertices.
Denote by $V=V^+\sqcup V^-$ its set of vertices, where $V^+$ and $V^-$ are the set of even and odd vertices respectively.
We consider the associated symmetric oriented graph obtained by doubling each edge of $\Ga$ into a pair of oppositely oriented edges.
If $a$ is a path, then we denote by $\bar a, s(a), t(a)$ its associated opposite edge, its source, and its target respectively.

We put $C_0^\pm=V^\pm$, $C_n^\pm,n\geqslant 1$ the set of paths in $\Ga$ of length $n$ that start in $V^\pm$, $C_*^\pm=\bigcup_{n\geqslant 0} C_n^\pm.$
Consider the \HS\ $\ell^2(C_n^\pm)$ with \ONB\ indexed by $C_n^\pm$.
Let $B(\ell^2(C_n^\pm))$ be the space of bounded linear operators on $\ell^2(C_n^\pm)$ with the standard \SMU\ $\{e_{a,b}: a,b\in C_n^\pm\}$.
Define the von Neumann subalgebra $\Pl^\pm_n\subset B(\ell^2(C_n^\pm))$ generated by the operators $e_{a,b}$ where $a,b$ are in $C_n^\pm$ and have the same source and target.
We denote by $ST_n^\pm,n\geqslant 0$ the set of couples $(a,b)$ where $a,b\in C_n^\pm,s(a)=s(b),$ and $t(a)=t(b)$.
We put $ST_*^\pm=\cup_{n\geqslant 0} ST_n^\pm.$
Observe that the \VNA\ $\Pl_0^\pm$ is isomorphic to the abelian atomic \VNA\ of bounded functions $\ell^\infty(V^\pm)$. 
We identify $\Pl_0^\pm$ with $\ell^\infty(V^\pm)$.
We denote by $\{e_v:v\in V^\pm\}$ the set of minimal projections of $\Pl_0^\pm.$

Assume there exists a strictly positive map $\mu:C_1\loriar \R_+^*$ called a weight satisfying that for any paths $a=a_1\cdots a_n, b=b_1\cdots b_l $ of length $n,l\geqslant 1$ such that $s(a_1)=s(b_1), t(a_n)=t(b_l)$ we have that
\begin{equation}\label{equa:mu_inversion}
\mu(a_1)\cdots\mu(a_n)=\mu(b_1)\cdots\mu(b_l) \text{ and}
\end{equation}
\begin{equation}\label{equa:mu_delta}
\text{ there exists } \delta>0 \text{ such that } \sum_{c\in C_1: s(c)=v} \mu(c)=\delta \text{ for any } v\in V.
\end{equation}
Such a pair $(\Ga,\mu)$ is called a weighted graph with modulus $\delta$ or simply a weighted graph.
Note that a weaker notion has been introduced and studied in \cite{DeCommer_Yamashita_duality_II} where the first assumption is replaced by $\mu(a)\mu(\bar a)=1$ for any edge $a$ where $\bar a$ denotes the opposite edge of $a$.

Recall that the degree deg$(v)$ of a vertex $v$ is the number of edges with source $v$ and the degree deg$(\Ga)$ of the graph $\Ga$ is define as $\sup_{v\in V} \text{deg}(v).$
The existence of the weight $\mu$ implies that deg$(\Ga)\leqslant \delta^2.$
Indeed, by \eqref{equa:mu_delta}, we have that $\mu(a)\leqslant \delta$ for any $a\in C_1.$
The equality \eqref{equa:mu_inversion} and the inequality of above implies that $\delta^{-1}\leqslant \mu(a)\leqslant\delta$ for any $a\in C_1$.
By using \eqref{equa:mu_delta} again, we obtain that $\delta=\sum_{a\in C_1:s(a)=v}\mu(a)\geqslant \text{deg}(v) \delta^{-1}$ for any $v\in V.$
Therefore, deg$(\Ga)\leqslant\delta^2.$

We extend $\mu$ on the set of all paths of $\Ga$ by putting $\mu(a_1\cdots a_n)=\mu(a_1)\cdots \mu(a_n)$ where $a_1\cdots a_n$ is the concatenation path of $n$ edges $a_1,\cdots,a_n.$
Up to multiplication by a strictly positive real number, there exists a unique map $\mu_V:V\loriar\R_+^*$ such that $\mu(a)=\mu_V(t(a))/\mu_V(s(a))$ for any $a\in C_1$.
Indeed, let us fix a vertex $o\in V$ and put $\mu_V(o)=1$ and $\mu_V(v)=\mu(a)$ where $a$ is any path such that $s(a)=o$ and $t(a)=v$.
Since $\Ga$ is assumed to be connected, there is always a path from $o$ to $v$.
Condition \eqref{equa:mu_inversion} assures that $\mu_V$ is well defined.
Suppose there is another weight $\nu$ satisfying that $\mu(a)=\nu(t(a))/\nu(s(a))$ for any $a\in C_1$.  We can see that $\nu(v)=\mu_V(v)\nu(o)$ for any $v\in V.$ Hence, $\mu_V$ is unique up to a multiplication by a strictly positive real number.
This map satisfies that $A_\Ga(\mu_V)=\delta\mu_V$, where $A_\Ga$ is the adjacency matrix of the graph $\Ga.$
Following \cite{Jones_planar_alg_bip_graph,Burstein_BGPA}, the data of $(\Ga,\mu_V)$ allows us to define a \PA\ structure on $\Pl_\Ga=\Pl=(\Pl_n^\pm : n\gO)$.
Note that this \PA\ structure only depends on $\Ga$ and $\mu$.
We say that $\Pl$ is the \BGPA\ associated to the weighted graph $(\Ga,\mu).$

The \BGPA\ $\Pl$ is known to satisfy the identity
\begin{equation}\label{equa:traces}
\tau_l(e_{a,b})=\delta_{a,b}e_{t(a)}\mu(\bar a) \text{ and } \tau_r(e_{a,b})=\delta_{a,b}e_{s(a)}\mu(a)
\end{equation}
 for any $(a,b)\in ST_*^\pm$, where $\delta_{a,b}$ is the Kronecker symbol.

Note that condition \eqref{equa:mu_inversion} on $\mu$ assures the existence of $\mu_V$ and implies that $\tau_l$ and $\tau_r$ are tracial.
More precisely, consider the collection of $*$-algebras $(\Pl_n^\pm:n\geqslant 0)$ and the linear functional $\tau_l,\tau_r:\Pl_n^\pm\loriar\Pl_0,n\geqslant 0$ defined on the \SMU\ by equation \eqref{equa:traces}.
Then $\tau_l,\tau_r$ are tracial \IFF\ $\mu(a)=\mu(b)$ for any $(a,b)\in ST_*^\pm.$
Indeed, suppose that $\tau_r$ is tracial.
If $(a,b)\in ST_*^\pm$, then $\tau_r(e_{a,b} e_{b,a})=e_{s(a)}\mu(a)=\tau_r(e_{b,a}e_{a,b})=e_{s(a)} \mu(b).$
Conversely, suppose that $\mu(a)=\mu(b)$ for any $(a,b)\in ST_*^\pm.$
Consider $n\geqslant 1, x=\sum_{(a,b)\in ST_n^\pm} x_{a,b} e_{a,b}, y=\sum_{(a,b)\in ST_n^\pm} y_{a,b} e_{a,b}.$
We have that $\tau_r(xy)=\sum_{(a,b)\in ST_n^\pm} x_{a,b} y_{b,a} \mu(a) e_{s(a)}$ and $\tau_r(yx)=\sum_{(a,b)\in ST_n^\pm} y_{b,a} x_{a,b} \mu(b) e_{s(b)}=\sum_{(a,b)\in ST_n^\pm} x_{a,b} y_{b,a} \mu(a) e_{s(a)}=\tau_r(xy).$
A similar proof shows that $\tau_l$ is also tracial.

\subsection{Amenability}\label{sec:amenability}
All groups that we consider are either countable discrete or locally compact second countable.
This implies that any quotient space by a closed subgroup is countable.
Recall that a locally compact group $G$ is amenable if and only if for any affine action of $G$ on a compact convex subset of a locally convex topological vector space there exists a $G$-fixed point.
Eymard defined and studied amenability of inclusion of groups \cite{Eymard_Coset_amen}.
A closed subgroup $H<G$ is co-amenable if and only if for any affine action of $G$ on a compact convex subset of a locally convex topological vector space that admits a $H$-fixed point there exists a $G$-fixed point.
Remark that if $G$ is an amenable group, then any closed subgroup $H<G$ is co-amenable.
There are alternative definitions of co-amenability when the subgroup $H<G$ is \AN, i.e. for any $g\in G$ the group $H\cap gHg^{-1}$ has finite index inside $H$ and $gHg^{-1}.$
It has been proven in \cite[Theorem 3.8]{Delaroche_AP_Coset_spaces} that a closed \AN\ subgroup $H<G$ is co-amenable if and only if there exists a sequence of positive definite $H$-bi-invariant maps $f_n:G\loriar\C, n\gO$ with support contained in a finite union of right $H$-cosets and that converges pointwise to $1$. We will use later this characterization.

Recall that if $\mN\subset \mM$ is an inclusion of II$_1$ factors and $L^2(\mathcal M,\tau)$ is the GNS \HS\ associated to the unique \nofa\ tracial state of $\mM$ we say that a $\mathcal N$-bimodule $K\subset L^2(\mathcal M,\tau)$ is bifinite if there exists a finite subset $F\subset L^2(\mM,\tau)$ such that $K$ is contained in the closure of Span$(x\cdot\xi ; x\in\mN,\xi\in F)$ and in the closure of Span$(\xi\cdot x : x\in \mN,\xi\in F).$
Similarly to the group case, we say that an inclusion of II$_1$ factors $\mN\subset\mM$ is co-amenable if there exists a sequence of \copo\ $\mN$-bimodular maps $\phi_n:\mM\loriar\mM$ such that their image is a bifinite $\mN$-bimodule and $\lim_{n\rightarrow\infty}\Vert\phi_n(x)-x\Vert_2=0$ for any $x\in \mM.$

Let $\Ql$ be a \SPA\ with loop parameter $\delta$.
Popa defined amenability for \SF s and $\lambda$-lattices \cite{Popa_classification_subfactors_amenable}.
In planar algebraic terms, $\Ql$ is amenable if $\Vert \Ga(\Ql)\Vert=\delta$, where $\Ga(\Ql)$ is the principal graph of $\Ql$ deduced from the Bratteli diagram of the tower of finite dimensional semi-simple $*$-algebras $\Ql_0^+\subset \Ql_1^+\subset\cdots$ and where $\Vert\Ga(\Ql)\Vert$ is the operator norm of the adjacency matrix of $\Ga$ acting on the $\ell^2$-space of the vertices of $\Ga(\Ql).$
If $\Ql$ is a \SPA, then we can associate a sequence of \SEI s $T_k\subset S_k, k\gO$ as constructed in \cite{CJS}.
Those are inclusions of II$_1$ factors. It can be shown that $T_1\subset S_1$ is isomorphic to the \SEI\ of a subfactor having its \SPA\ isomorphic to $\Ql$, see \cite[Theorem 3.3]{CJS}.
Popa proved that $\Ql$ is amenable if and only if the \SEI\ $T_1\subset S_1$ is co-amenable \cite[Theorem 5.3]{Popa_symm_env_T}.
Furthermore, up to a compression and finite index inclusions we have that $T_1\subset S_1$ is isomorphic to $T_0\subset S_0$ \cite[Lemma 4.3]{Brot_Jones_AP_SPA}.
Therefore, $\Ql$ is amenable if and only if $T_0\subset S_0$ is co-amenable.
We will later use this characterization of amenability of \SPA s.

\subsection{A criterion of amenability for \SPA s}\label{sec:amenability}

We provide a criteria of non-amenability for a \SPA\ embedded in a \BGPA.
The proof of this criteria is elementary and is interesting by its own.

\begin{theo}\label{theo:amenable}
Let $(\Ga,\mu,\delta)$ be a weighted graph with a modulus such that $\Vert \Ga\Vert<\delta.$
Consider its associated \BGPA\ $\Pl.$
If $\Ql$ is a \SPA\ that embeds inside $\Pl$, then $\Ql$ is non-amenable.
\end{theo}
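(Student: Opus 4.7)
The plan is to estimate the principal graph $\Lambda := \Ga(\Ql)$ of $\Ql$ via the embedding $\Ql\hookrightarrow\Pl$ and invoke the characterization of amenability recalled in subsection \ref{sec:amenability}: $\Ql$ is amenable if and only if $\Vert\Lambda\Vert=\delta$. Thus it suffices to prove $\Vert\Lambda\Vert\leqslant\Vert\Ga\Vert$, for then $\Vert\Lambda\Vert\leqslant\Vert\Ga\Vert<\delta$ immediately rules out amenability.

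The key observation is that in the BGPA $\Pl$, every matrix unit $e_{a,b}$ of $\Pl_n^+$ satisfies $s(a)=s(b)$. Setting $P_v^n := \sum_{a\in C_n^+,\, s(a)=v} e_{a,a}$ for $v\in V^+$, one obtains an orthogonal decomposition $\Pl_n^+=\bigoplus_{v\in V^+} P_v^n \Pl_n^+ P_v^n$, with each summand isomorphic to $\bigoplus_{w\in V} M_{N(v,w,n)}(\C)$ where $N(v,w,n):=(A_\Ga^n)_{v,w}$ is the number of length-$n$ paths from $v$ to $w$. Consequently
\[
\dim\bigl(P_v^n\Pl_n^+ P_v^n\bigr)=\sum_{w\in V}N(v,w,n)^2=(A_\Ga^{2n})_{v,v}=\Vert A_\Ga^n\delta_v\Vert_2^2\leqslant\Vert\Ga\Vert^{2n},
\]
using that the bound $\mathrm{deg}(\Ga)\leqslant\delta^2$ established in the preliminaries makes $A_\Ga$ a bounded self-adjoint operator on $\ell^2(V)$ of norm $\Vert\Ga\Vert$.

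The embedding $\Ql\hookrightarrow\Pl$ sends $1_\Ql\in\Ql_0^+=\C$ to a projection $p=\sum_{v\in S}e_v$ in $\Pl_0^+=\ell^\infty(V^+)$ for some $S\subset V^+$, whose image in $\Pl_n^+$ under the planar inclusion is $\sum_{v\in S}P_v^n$. Hence $\Ql_n^+\subset\bigoplus_{v\in S}P_v^n\Pl_n^+ P_v^n$ and
\[
\dim\Ql_n^+\;\leqslant\;\sum_{v\in S}(A_\Ga^{2n})_{v,v}\;\leqslant\;|S|\cdot\Vert\Ga\Vert^{2n}.
\]
Since the Bratteli diagram of the tower $\Ql_0^+\subset\Ql_1^+\subset\cdots$ is precisely $\Lambda$ rooted at its distinguished vertex $*$, we have $\dim\Ql_n^+=(A_\Lambda^{2n})_{*,*}$, and a spectral-radius calculation applied to the positive self-adjoint operator $A_\Lambda$ on $\ell^2(V(\Lambda))$ at the cyclic-for-$A_\Lambda^2$ vector $\delta_*$ gives $\Vert\Lambda\Vert^2=\limsup_n(\dim\Ql_n^+)^{1/n}$. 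Combining the bounds yields $\Vert\Lambda\Vert^2\leqslant\Vert\Ga\Vert^2$, finishing the proof.

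The main obstacle I foresee is ensuring the support $S$ of $1_\Ql$ is finite, or more generally, that the actual supporting set $F_n:=\{v\in V^+:P_v^n\Ql_n^+ P_v^n\neq 0\}$ grows at most subexponentially in $n$. This is immediate when $1_\Ql$ maps to a finite-support projection. In general, one uses that the planar inclusion $\Ql_n^+\hookrightarrow\Ql_{n+1}^+$ preserves the starting vertex of each matrix unit, so $F_n\subset F_{n+1}$ is inherited from the starting-vertex support of finitely many ``generators'' of the sub-planar algebra $\Ql$---which is enough for the above $\limsup$ argument to survive.
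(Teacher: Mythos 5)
Your overall strategy --- bounding the exponential growth rate of $\dim\Ql_n^+$ by path counts in $\Ga$ and reading $\Vert\Ga(\Ql)\Vert$ off that growth rate --- is genuinely different from the paper's, which shows that the square \eqref{equa:square} is a non-degenerate commuting square for the conditional expectation $E_\Pl$ and invokes \cite[Remark 5.3.5]{Jones_Sunder_subfactors} to get $\Vert\Ga(\Ql)_n\Vert\leqslant\Vert\Delta_n\Vert\leqslant\Vert\Ga\Vert$. However, as written your argument has a genuine gap exactly where you flagged one, and the repair you propose does not work. A morphism of planar algebras intertwines \emph{all} tangles, including those with no input discs; since $1_{\Ql_n^+}$ is the output of the $n$-strand no-input tangle, the embedding $\Ql\hookrightarrow\Pl$ is automatically unital in every degree. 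Hence $1_{\Ql_0^+}$ goes to the constant function $1\in\ell^\infty(V^+)$ (as the paper itself uses in the proof of Proposition \ref{prop:SQ-factor}), so your set $S$ is all of $V^+$; likewise $F_n=\{v:P_v^n\Ql_n^+P_v^n\neq 0\}=V^+$ for every $n$, because $P_v^n\,1\,P_v^n=P_v^n\neq0$. There is no ``finitely many generators'' escape: the unit alone already has full support. This defeats the bound $\dim\Ql_n^+\leqslant|S|\,\Vert\Ga\Vert^{2n}$ precisely in the only non-vacuous case of the theorem: for a \emph{finite} connected $\Ga$, the strictly positive eigenvector $\mu_V$ with $A_\Ga\mu_V=\delta\mu_V$ forces $\delta=\Vert\Ga\Vert$ by Perron--Frobenius, so the hypothesis $\Vert\Ga\Vert<\delta$ can only occur when $V^+$ is infinite.

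The dimension-counting route can be saved, but by a different observation. Fix a single $v\in V^+$. The projection $P_v^n$ is central in $\Pl_n^+$, and the cut-down $x\mapsto P_v^nx$ is \emph{injective} on $\Ql_n^+$: by \eqref{equa:traces} the value of $\tau_r(xx^*)\in\ell^\infty(V^+)$ at $v$ is $\sum_{a:\,s(a)=v}\mu(a)(xx^*)_{a,a}$, which vanishes exactly when $P_v^nxx^*P_v^n=0$; but for $x\in\Ql_n^+$ the element $\tau_r(xx^*)$ lies in $\Ql_0^+=\C 1$, i.e.\ is constant, so $P_v^nx=0$ forces $\tau_r(xx^*)=0$ and hence $x=0$ by non-degeneracy of $\Ql$. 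This gives $\dim\Ql_n^+\leqslant\dim(P_v^n\Pl_n^+P_v^n)=(A_\Ga^{2n})_{v,v}\leqslant\Vert\Ga\Vert^{2n}$ with no support issue, and the rest of your computation (which is correct: $\dim\Ql_n^+=(A_{\Ga(\Ql)}^{2n})_{*,*}$, and the local spectral radius at any vertex of a connected locally finite graph of bounded degree equals the norm of its adjacency operator) then yields $\Vert\Ga(\Ql)\Vert\leqslant\Vert\Ga\Vert<\delta$. With that one fix your proof is a valid, and arguably more elementary, alternative to the commuting-square argument of the paper.
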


\begin{proof}
Let $\Ga,\mu,\delta,\Ql$ be as above.
Denote by $\Ga(\Ql)$ the principal graph of $\Ql.$
It is sufficient to show that $\Vert\Ga(\Ql)\Vert\leqslant \Vert \Ga\Vert$ since $\Ql$ and $\Pl$ have the same modulus $\delta.$
Let $\Ga(\Ql)_n$ be the Bratteli diagram of the inclusion $\Ql^+_n\subset \Ql_{n+1}^+$ for $n\gO.$
We have the equality 
\begin{equation}\label{equ:am1}
\Vert\Ga(\Ql)\Vert=\lim_{n\rightarrow\infty}\Vert\Ga(\Ql)_n\Vert.
\end{equation}
Let $\Delta_n$ be the Bratteli diagram of the inclusion $\Pl^+_n\subset \Pl_{n+1}^+$.
This graph is equal to the disjoint union of graphs $\Delta_n=\bigsqcup_{v\in V^+} \Delta_n(v)$, where $\Delta_n(v)$ is the subgraph of $\Ga$ with vertices $V_n(v)^\pm=\{ w \in V^\pm: d(v,w)\leqslant n+1 \}$ and edges the one of $\Ga$.
Since $\Delta_n$ is a subgraph of $\Ga$ we have that
\begin{equation}\label{equ:am2}
\Vert \Delta_n\Vert\leqslant \Vert \Ga\Vert, \text{ for any }n\geqslant 0.
\end{equation}
The \PA s $\Ql$ is contained inside $\Pl$.
Fix $n\gO$ and consider the following square of inclusions
\begin{equation}\label{equa:square}
\begin{array}{ccc}
\Pl_{n}^+ &\subset & \Pl_{n+1}^+ \\
\cup & & \cup \\
\Ql_{n}^+ & \subset & \Ql_{n+1}^+
\end{array}.\end{equation}
Consider the normalized tracial operator 
$$tr_m:\Pl_{m}^+\loriar\Pl_0^+,x\longmapsto \frac{1}{\delta^m}\diagtaur \text{ for any } m\gO.$$
Observe that the restriction of $tr_{n+1}$ to $\Pl_n^+$ is equal to $tr_n$.
We equipped $\Pl_n^+,\Ql_n^+$, and $\Ql_{n+1}^+$ with the corresponding restrictions of $tr_{n+1}.$
Note that $$E_\Pl: \Pl_{n+1}^+\loriar \Pl_n^+, x\longmapsto \diagEP$$
is a faithful \CE\ satisfying $tr_{n}\circ E_\Pl=tr_{n+1}.$
The restriction of $E_\Pl$ to $Q_{n+1}^+$ is a trace-preserving \CE\ from $\Ql_{n+1}^+$ onto $\Ql_n^+$.
Therefore, the square of inclusion \eqref{equa:square} is a commuting square of inclusions with respect to the \CE s $E_\Pl$ and $E_\Pl\vert_{\Ql_{n+1}^+}.$
Let $L_n$ be the \VNA\ generated by $\Pl_n^+$ and $\Ql_{n+1}^+$ inside $\Pl_{n+1}^+.$
Denote by $\Lambda_n$ the Bratteli diagram of the inclusion $\Pl_n^+\subset L_n$.
The inclusions $\Ql_n^+\subset \Ql_{n+1}^+\subset L_n$ and $\Pl_n^+\subset L_n$ form a non-degenerate commuting square.
Therefore, $\Vert\Ga(\Ql)_n\Vert= \Vert \Lambda_n\Vert$ by \cite[Remark 5.3.5]{Jones_Sunder_subfactors}.
This implies that 
\begin{equation}\label{equ:am3}
\Vert \Ga(\Ql)_n\Vert\leqslant \Vert \Delta_n\Vert, \text{ for any } n\gO.
\end{equation}
We obtain the result by combining \eqref{equ:am1}, \eqref{equ:am2}, and \eqref{equ:am3}.
\end{proof}

\begin{remark}
The embedding theorem of \cite{Jones_Penneys_embedding_pla_alg,Morrison_Walker_embedding} implies the following.
If $\Ql$ is a non-amenable \SPA, then there exists a \BGPA\ $\Pl$ associated to a weighted graph with a modulus $(\Ga,\mu,\delta)$ such that $\Vert\Ga\Vert<\delta$ and such that $\Ql$ embeds inside $\Pl.$
Therefore, a \SPA\ $\Ql$ is non-amenable if and only if there exists a weighted graph with a modulus $(\Ga,\mu,\delta)$ such that $\Vert\Ga\Vert<\delta$ and such that $\Ql$ embeds in the \BGPA\ associated to $(\Ga,\mu,\delta)$.
\end{remark}

\section{Construction of \VNA s}

\subsection{Von Neumann algebras associated to a weighted graph with a modulus}\label{sec:construction}

We fix a weighted graph $(\Ga,\mu)$ with modulus $\delta$ and consider the \BGPA\ $\Pl=\Pl_\Ga$.
We generalize the construction of \cite{GJS_1,JSW} and \cite{CJS} and provide a tower of \VNA s and a family of \SEI s associated to $\Pl$.
Let $k\geqslant 0$ be a natural number and $\epsilon$ the sign $+$ is $k$ is even and $-$ if $k$ is odd.
For any $n,m\gO,$ let $D_k(n,m)$ be a copy of the vector space $\Pl^+_{n+m+2k}$.

Consider the direct sum
$$Gr_k\Pl\boxtimes Gr_k\Pl:=\bigoplus_{n,m\gO}D_k(n,m)$$
that we equipped with the Bacher product:
$$\diagmultxy$$
where $x\in D_k(n,m), y\in D_k(i,j),$ and there are $2k$ horizontal strands in the middle.

Let $\dag:Gr_k\Pl\boxtimes Gr_k\Pl\loriar Gr_k\Pl\boxtimes Gr_k\Pl$ be the anti-linear involution that sends $D_k(n,m)$ to itself and satisfies
$$x^\dag=\diagxdag\ , \text{ for any } x\in D_k(n,m).$$
Consider the linear map $E:Gr_k\Pl\boxtimes Gr_k\Pl\loriar\Pl_0^\epsilon$ which sends $x\in D_k(0,0)$ to 
$$\delta^{-2k}\diagthree$$
and $0$ to any element in $D_k(n,m)$ with $(n,m)\neq (0,0).$
The vector space $Gr_k\Pl\boxtimes Gr_k\Pl$ endowed with those operation is an associative $*$-algebra with a faithful positive linear map to $\Pl_0^\epsilon$.
We fix a weight $\mu_V:V\loriar\R_+^*$ that satisfies that $\mu(a)=\mu_V(t(a))/\mu_V(s(a))$ for any $a\in C_1.$
Consider the \nofa\ semi-finite weight $\tau_V$ satisfying $\tau_V(e_v)=\mu_V(v)^2$ for any $v\in V.$
Denote by $H_k(n,m)$ the \HS\ equal to the completion of the pre-\HS\ $\{x\in D_k(n,m): \tau_V\circ E(xx^\dag)<\infty\}$ equipped with the inner product $\langle x,y\rangle=\tau_V\circ E(xy^\dag)$, for any $n,m\gO$.
Let $H_k=\bigoplus_{n,m\gO} H_k(n,m)$ be the direct sum of those \HS s.
Consider $\pi_k:Gr_k\Pl\boxtimes Gr_k\Pl\loriar\mathcal L(K_k)$, the left regular representation of $Gr_k\Pl\boxtimes Gr_k\Pl$, where $K_k=H_k\cap Gr_k\Pl\boxtimes Gr_k\Pl$ and  $\mathcal L(K_k)$ is the algebra of linear maps from $K_k$ to $K_k$.

\begin{prop}
For any $x\in Gr_k\Pl\boxtimes Gr_k\Pl$, the linear map $\pi_k(x)$ defines a bounded operator on $H_k$.
\end{prop}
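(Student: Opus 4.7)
The plan is to adapt the Guionnet--Jones--Shlyakhtenko boundedness argument used in \cite{GJS_1,CJS} to the present weighted-graph setting. By linearity and the decomposition $Gr_k\Pl\boxtimes Gr_k\Pl=\bigoplus_{n,m\gO}D_k(n,m)$, it suffices to fix $x\in D_k(n,m)$ and exhibit a constant $C_x>0$ with $\Vert\pi_k(x)y\Vert_{H_k}\leqslant C_x\Vert y\Vert_{H_k}$ for every $y\in K_k$.

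First, I would use the Bacher product rule: for $y\in D_k(i,j)$ the product $xy$ decomposes as the finite sum $\sum_{a=0}^{\min(2n,2i)}\sum_{b=0}^{\min(2m,2j)}(xy)_{a,b}$ with $(xy)_{a,b}\in D_k(n+i-a,m+j-b)$. These summands lie in pairwise orthogonal components of $H_k=\bigoplus_{n',m'}H_k(n',m')$, and the total number of summands is bounded by $(2n+1)(2m+1)$ uniformly in $(i,j)$. Hence, by Pythagoras, it is enough to bound each $\Vert(xy)_{a,b}\Vert_{H_k}$ by a constant (depending on $x$ and $(a,b)$ only) times $\Vert y\Vert_{H_k}$.

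Next, fix $a,b$. The squared norm $\Vert(xy)_{a,b}\Vert_{H_k}^2=\tau_V\circ E\bigl((xy)_{a,b}(xy)_{a,b}^\dag\bigr)$ is the evaluation of a closed planar network built from $x,x^\dag,y,y^\dag$ by stacking them with caps dictated by $(a,b)$, then closing up with the $2k$ through-strands and the boundary traces; each closed loop contributes a factor $\delta$ and each vertex contributes a factor of $\mu_V$. Applying Cauchy--Schwarz to the positive semi-definite form $(u,v)\mapsto\tau_V\circ E(uv^\dag)$ in the outer layer formed by the $y$'s lets me peel off $y$ from $x$, producing an estimate of the form $\Vert(xy)_{a,b}\Vert_{H_k}^2\leqslant C_{x,a,b}\cdot\tau_V\circ E(yy^\dag)=C_{x,a,b}\Vert y\Vert_{H_k}^2$, where $C_{x,a,b}$ is a product of $\tau_V\circ E(xx^\dag)$ and a power of $\delta$ determined by $n,m,k$ only. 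Summing over the at most $(2n+1)(2m+1)$ capping choices then yields the required $C_x$.

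The main obstacle is that $\tau_V$ is a semi-finite weight rather than a finite state, so the Cauchy--Schwarz step cannot be applied globally. I plan to localize it by working in each vertex corner $e_v(Gr_k\Pl\boxtimes Gr_k\Pl)e_v$ for $v\in V^\epsilon$, where the restriction of $\tau_V\circ E$ is a finite tracial functional, and then assemble the pieces. The cancellation needed for the resulting bound $C_{x,a,b}$ to be independent of $v$ is precisely the modular compatibility $\mu(a)=\mu_V(t(a))/\mu_V(s(a))$ from \eqref{equa:mu_inversion}, which makes the $\mu_V$-weights along each strand of the closed network telescope, leaving only the $\delta$-factors and the finite quantity $\tau_V\circ E(xx^\dag)$. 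Once this uniformity is in hand, $\pi_k(x)$ extends by continuity to a bounded operator on $H_k$.
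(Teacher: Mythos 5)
Your reduction to a single $x\in D_k(n,m)$, the splitting of the Bacher product into the at most $(2n+1)(2m+1)$ capping terms $(xy)_{a,b}$, and the observation that each term shifts the grading of $H_k=\bigoplus H_k(n',m')$ by a fixed amount so that it suffices to bound each term separately, all match the argument the paper relies on (its proof is a one-line deferral to \cite[Theorem 3.3]{JSW}). The gap is in the key estimate: the constant $C_{x,a,b}$ cannot be a power of $\delta$ (depending on $n,m,k$ only) times $\tau_V\circ E(xx^\dag)$, because the $L^2$-quantity $\tau_V\circ E(xx^\dag)$ does not control the operator norm of $\pi_k(x)$. Concretely, take $k=0$ and $x=e_v\in D_0(0,0)=\Pl_0^+$; the Bacher product has a single term ($a=b=0$) and $\pi_0(e_v)$ is the nonzero projection $p_v$, so $\Vert\pi_0(e_v)\Vert=1$, while $\tau_V\circ E(e_ve_v^\dag)=\mu_V(v)^2$. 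Since $\mu_V$ is just a positive $\delta$-eigenvector of $A_\Ga$, it need not be bounded below on $V^+$ when $\Ga$ is infinite (e.g.\ the bi-infinite path with $\delta>2$ and $\mu_V(n)=t^{n}$, $t+t^{-1}=\delta$, $t>1$), so your claimed bound $1\leqslant\delta^{p}\mu_V(v)$ fails for vertices far out. Relatedly, the Cauchy--Schwarz step as described does not produce the asserted inequality: peeling a copy of $x$ across the form $\langle u,v\rangle=\tau_V\circ E(uv^\dag)$ leaves you with $\Vert x^\dag(xy)_{a,b}\Vert_{H_k}$ on the right-hand side, and iterating this (Haagerup's trick) converges to the very operator norm you are trying to establish, not to $\Vert x\Vert_{H_k}$.

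The repair is the one used in \cite{GJS_1} and \cite{JSW}: factor each capping term $T_{a,b}$ as an ``annihilation'' map on $y$ (capping off $a$ top and $b$ bottom strings, explicitly bounded with norm a power of $\delta^{1/2}$), followed by multiplication by a partial closure of $x$ \emph{inside the von Neumann algebra} $\Pl_{n+m+2k}^{+}$ --- bounded by the operator norm $\Vert x\Vert_{\Pl_{n+m+2k}^+}$, which is finite for every $x$ since $\Pl_{n+m+2k}^+$ is a von Neumann algebra even when $\Ga$ is infinite --- followed by the inclusion into the target summand, which is isometric up to a power of $\delta^{1/2}$. Your remarks about localizing the semi-finite weight $\tau_V$ to vertex corners and about the telescoping of $\mu_V$ along strands via $\mu(a)=\mu_V(t(a))/\mu_V(s(a))$ are correct and are indeed what makes the $\delta$-factors vertex-independent, but they cannot rescue a constant built from $\Vert x\Vert_{H_k}$ alone; the C$^*$-norm of $x$ in the box space must enter.
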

\begin{proof}
The proof follows the same ideas as \cite[Theorem 3.3]{JSW}.
\end{proof}

Denote by $S_k$ the \VNA\ generated by the image of $Gr_k\Pl\boxtimes Gr_k\Pl$ inside $B(H_k)$.
We continue to denote by $\pi_k$ the representation of $S_k$ on $H_k.$
Let $Gr_k\Pl$ be the subalgebra of $Gr_k\Pl\boxtimes Gr_k\Pl$ generated by $\bigcup_{n\gO}D_k(n,0)$.
We identify the opposite algebra of $Gr_k\Pl$ with the subalgebra of $Gr_k\Pl\boxtimes Gr_k\Pl$ generated by $\bigcup_{m\gO} D_k(0,m)$ that we denote by $Gr_k\Pl^\op.$
Let $\C[V^\epsilon]\subset \ell^\infty(V^\epsilon)$ be the subalgebra generated by the set of projections $\{e_v, v\in V^\epsilon\}$.
The abelian algebra $\C[V^\epsilon]$ is contained in the center of $Gr_k\Pl$ and $Gr_k\Pl^\op$.
Further, the subalgebra of $Gr_k\Pl\boxtimes Gr_k\Pl$ generated by $Gr_k\Pl$ and $Gr_k\Pl^\op$ is isomorphic to the tensor product of $Gr_k\Pl$ and $Gr_k\Pl^\op$ over $\C[V^\epsilon]$.
We denote this subalgebra by $$Gr_k\Pl\otimes_{\C[V^\epsilon]}Gr_k\Pl^\op.$$ 

Denote by $T_k$ and $M_k$ the von Neumann subalgebras of $S_k$ generated by $Gr_k\Pl\otimes_{\C[V^\epsilon]}Gr_k\Pl^\op$ and $Gr_k\Pl$ respectively.
The \VNA\ generated by $Gr_k\Pl^\op$ is isomorphic to the opposite \VNA\ of $M_k$. We denote it by $M_k^\op.$
There exists an inclusion of $*$-algebras $i_k:Gr_k\Pl\otimes_{\C[V^\epsilon]}Gr_k\Pl^\op\loriar Gr_{k+1}\Pl\otimes_{\C[V^\varepsilon]}Gr_{k+1}\Pl^\op$ which consists of adding two horizontal lines in the middle and dividing by $\delta^2,$ where $\varepsilon$ is $+$ if $k$ is odd and $-$ if $k$ is even.
This inclusion induces unital embeddings from $T_k$ into $T_{k+1}$, $M_k$ into $M_{k+1}$, and $M_k^\op$ into $M_{k+1}^\op.$

\subsection{Properties associated to those von Neumann algebras}

For any $n,m\gO$, denote by $j_k(n,m)$ the canonical embedding of $\Pl^+_{n+m+2k}$ into $D_k(n,m)$ viewed as a subspace of $S_k.$
For any vertex $v\in V^\epsilon$, consider the projection $$\diagpkv$$
This projection belongs to $M_k\cap M_k^\op$ and commutes with $T_k$.
Denote by $T_k(v), M_k(v), M_k(v)^\op$ the corners $T_k p_v, M_kp_v,$ and $M_k^\op p_v$ respectively.

\begin{prop}\label{prop:vna}
\TFAT
\begin{enumerate}
\item The \VNA\ $T_k(v)$ is isomorphic to the tensor product $M_k(v)\ootimes M_k(v)^\op$ and $M_k(v)$ is a II$_1$ factor, for any $v\in V^\epsilon.$
\item The corner $p_vS_kp_v$ is equal to $T_k(v)$ for any $v\in V^\epsilon.$
\item The relative commutant $T_k'\cap S_k$ is equal to the center $Z(T_k)$ of $T_k$ and is isomorphic to $\Pl_0^\epsilon.$
The set of minimal central projections of $T_k$ is $\{p_v: v\in  V^\epsilon\}$.
\item The \VNA\ $S_k$ is a type II factor. It is a finite factor if and only if the graph $\Ga$ is finite.
Up to a scalar, the unique \nofa\ semi-finite tracial weight of $S_k$ is defined as $Tr(x)=\sum_{v\in \Ve} \langle \pi_k(x) p_v,p_v\rangle$ for any positive operator $x\in S_k.$
Moreover, $Tr(y)=\tau_V\circ E(y)$ for any positive operator $y\in Gr_k\Pl\boxtimes Gr_k\Pl.$   
\end{enumerate}
\end{prop}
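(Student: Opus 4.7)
The overall strategy is to adapt the arguments of \cite{GJS_1, JSW, CJS} from the subfactor planar algebra setting to the bipartite graph case; the main new feature is the atomic abelian subalgebra $\Pl_0^\epsilon,$ which forces a corner-by-corner analysis through the central projections $p_v.$

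I would first handle (1) and (2) together. Since $p_v$ is central in $M_k,$ the corner $M_k(v)$ is the von Neumann algebra generated by the left-sided diagrams based at $v,$ and its factoriality as a II$_1$ factor follows by adapting the Guionnet-Jones-Shlyakhtenko construction in \cite{GJS_1, JSW}: one produces enough Temperley-Lieb elements together with a trace-preserving rotation action to verify the standard factoriality criterion, the weighted-graph setting only affecting scalar factors. Since $M_k(v)$ and $M_k(v)^\op$ are commuting II$_1$ factors inside $p_v S_k p_v,$ the natural $*$-homomorphism $M_k(v) \odot M_k(v)^\op \to p_v S_k p_v$ extends to a normal injective morphism identifying $T_k(v)$ with $M_k(v) \ootimes M_k(v)^\op.$ For (2), it suffices to show that $p_v j_k(n,m)(y) p_v \in T_k(v)$ for every $y \in \Pl_{n+m+2k}^+$: the key graphical step is to insert a resolution of the identity $\sum_a e_{a,a}$ along the $2k$ middle horizontal strands of the tangle (indexed by length-$2k$ paths $a$ starting at $v$), after which each summand becomes, up to the appropriate weight prescribed by $\mu,$ a product of an element of $D_k(n,0)$ and an element of $D_k(0,m),$ hence an element of $M_k(v) \cdot M_k(v)^\op \subset T_k(v).$

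Granted (1) and (2), the remaining parts are essentially formal. For (3), any $y \in T_k' \cap S_k$ commutes with every $p_v,$ so $y p_v = p_v y p_v \in T_k(v)$ commutes with $T_k(v),$ and is therefore a scalar multiple of $p_v$ since $T_k(v)$ is a tensor product of two II$_1$ factors; summing over $v$ gives $y \in Z(T_k) \cong \Pl_0^\epsilon$ with minimal central projections the $p_v.$ For (4), $Z(S_k) \subset T_k' \cap S_k = Z(T_k),$ and for adjacent vertices $v, w$ an edge $a$ joining them produces via $j_k(1,0)$ a partial isometry in $S_k$ intertwining $p_v$ and $p_w;$ connectedness of $\Ga$ then forces any central element $\sum_v \lambda_v p_v$ to have all coefficients equal, so $Z(S_k) = \C.$ Since the II$_1$ factor $M_k(v)$ is a corner of $S_k,$ the factor $S_k$ is of type II; the formula $Tr(x) = \sum_v \langle \pi_k(x) p_v, p_v\rangle$ defines a \nofa\ semi-finite tracial weight with $Tr(p_v) = \mu_V(v)^2,$ so $Tr(1) = \sum_v \mu_V(v)^2$ is finite \IFF\ $\Ga$ is finite. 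Uniqueness of the trace up to scalar is the standard factoriality argument, and the identity $Tr = \tau_V \circ E$ on positive elements of $Gr_k\Pl \boxtimes Gr_k\Pl$ follows by unpacking the definitions of $E,$ $\tau_V,$ and the vector state on $H_k.$

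The main obstacle is the graphical computation in step (2): the middle-strand resolution of the identity has to be handled carefully so as to cleanly express $p_v j_k(n,m)(y) p_v$ as a linear combination of left-right products carrying the correct scalar weights coming from $\mu_V.$ Once this is in hand, the factoriality input for (1) and then (3) and (4) follow from routine von Neumann algebraic arguments based on the commuting-tensor-product structure of $T_k(v).$
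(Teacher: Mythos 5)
Your plan follows the same route as the paper's proof: establish (1) and (2) on the dense diagrammatic $*$-algebra, then deduce (3) from the corner decomposition and (4) from connectedness of $\Ga$ together with the explicit weight $Tr$. Most steps match the paper's argument. There is, however, one genuine gap, in (1). You assert that because $M_k(v)$ and $M_k(v)^{\op}$ are commuting II$_1$ factors generating $T_k(v)$, ``the natural $*$-homomorphism $M_k(v)\odot M_k(v)^{\op}\to p_vS_kp_v$ extends to a normal injective morphism identifying $T_k(v)$ with $M_k(v)\ootimes M_k(v)^{\op}$.'' Commutation plus generation does not yield this: two commuting II$_1$ factors can generate something other than their tensor product (for instance $M$ and $JMJ$ acting on $L^2(M)$ generate all of $B(L^2(M))$). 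The missing ingredient, which is the actual content of (1) and which the paper verifies diagrammatically, is that the positive functional coming from $E$ factorizes on alternating words, $\varphi(a_1b_1^{\op}\cdots a_nb_n^{\op})=\tau(a_1\cdots a_n)\tau(b_1\cdots b_n)$ for $a_i\in p_vGr\Pl$ and $b_i^{\op}\in p_vGr\Pl^{\op}$; only then does the GNS construction identify $T_k(v)$ with the tensor product. This computation should be stated and checked rather than folded into ``the natural map extends.''

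Two smaller points. In (2), the paper's mechanism (carried out for $k=0$, from which the general case is deduced) is not a resolution of the identity on middle strands --- there are no middle strands when $k=0$ --- but the observation that a matrix unit $e_{a,b}$ survives $p_v(\cdot)p_v$ only if the underlying loop passes through $v$ at position $2n$, in which case the loop, hence the element $x_l$, splits as a product of elements of $D(n,0)$ and $D(0,m)$. Your version for general $k$ is the same idea, but you would need to make precise how the box is first cut into two boxes joined by $2k$ strands before any resolution of the identity can be inserted. In (4), the implication ``$\Ga$ infinite $\Rightarrow Tr(1)=\infty$'' is not immediate from $Tr(1)=\sum_v\mu_V(v)^2$, since a priori this series could converge; the paper rules this out using the bounds $\delta^{-1}\leqslant\mu(a)\leqslant\delta$, which force $\mu_V$ to be either unbounded or bounded below by a positive constant on $V^+$. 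Likewise, traciality of $Tr$ is not automatic from the formula and is checked in the paper on matrix units via the identity \eqref{equa:traces} before invoking uniqueness of the trace on a type II factor.
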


\begin{proof}
We assume that $k=0$ and drop the subscript $k$.
The general case can easily be deduced.

Proof of (1).
Consider a vertex $v\in V^+$. 
It is obvious that $M(v)$ and $M(v)^\op$ commute and generate the \VNA\ $T(v)$.
Following \cite[Corollary 4.12]{JSW}, we obtain that $M(v)$ is a II$_1$ factor.
Let $\tau$ be the \nofa\ tracial state of $M(v).$
Consider the map $E: Gr\Pl\boxtimes Gr\Pl\loriar\Pl_0^+$ defined in Section \ref{sec:construction} and its restriction $\varphi$ to $T(v)\cap Gr\Pl\boxtimes Gr\Pl$ that has values in $\C p_v\simeq \C.$
Observe that $\varphi(a_1b_1^\op\cdots a_n b_n^\op)=\tau(a_1\cdots a_n)\tau (b_1\cdots b_n)$ for any $a_1,\cdots, a_n\in p_vGr\Pl$ and $b_1^\op,\cdots,b_n^\op\in p_vGr\Pl^\op.$
This implies that $T(v)$ is isomorphic to $M(v)\ootimes M(v)^\op.$

Proof of (2).
Consider a loop $l$ of length $2n+2m$ that starts at an even vertex $v.$
It defines a partial isometry $e_{a,b}\in \Pl^+_{n+m}\subset  B(C_{n+m}^+)$, where $a$ is the path equal to the first half of the loop and $b$ the opposite path equal to the second half of the loop.
Consider the corresponding element $x_l:=j(n,m)(e_{a,b})\in S.$
If $p_vx_lp_v=x_l$, then the 2n-th vertex in the loop $l$ is also equal to $v$.
This implies that $l$ is the concatenation of two loops $l_1$ and $l_2$ where $l_1$ is the truncation of $l$ for the 2n-th first edges and $l_2$ is the rest of the loop $l$.
Let $y_i=e_{a_i,b_i}$ be the partial isometry of $\Pl_n^+$ if $i=1$ and $\Pl_m^+$ if $i=2$ such that the concatenation of the path $a_i$ and the opposite of $b_i$ is equal to the loop $l_i$ for $i=1,2$.
We have that $x_l=x_{l_1}\otimes x_{l_2}$, where $x_{l_1}=j(n,0)(e_{a_1,b_1})$ and $x_{l_2}=j(0,m)(e_{a_2,b_2})$, 
$$\text{i.e. } \diagxl$$
Therefore, $p_vD(n,m)p_v$ is contained in $T(v)$ for any $v\in V^+$ and $n,m\gO.$
By density, we obtain that $p_vSp_v=T(v).$

Proof of (3).
Consider $x$ in the relative commutant $T'\cap S$.
For any $v,w\in V^+$ we have that $p_vxp_w=p_vp_wx.$
Therefore, $x$ is in $T$ by (2).
We obtain that $x$ is in the center of $T$.
We conclude by using (1).

Proof of (4).
Consider an element $x$ in the center of $S$.
By (3) this element belongs to the center of $T$.
Hence, there exists a bounded map $f:V^+\loriar\C$ such that $x=\sum_{v\in V^+} f(v)p_v$.
Consider two vertices $v$ and $w$.
There exists a path $c$ of length $n$ in $\Ga$ that starts at $v$ and ends at $w$ since the graph is connected.
Consider the corresponding non-zero projection $e_{c,c}\in \Pl_n^+$ and its image $y:=j(n,n)(e_{c,c})$ in $S$.
Observe that $yx=f(w)y$ and $xy=f(v)y$. Therefore, $f$ is constant and the center of $S$ is trivial.
We deduce that $S$ is a factor.
We have that $p_vSp_v$ is a II$_1$ factor by (1) and (2).
Therefore, $S$ is a type II factor.
Consider the weight $Tr$ of $S$ defined by $Tr(x)=\sum_{v\in V^+}\langle \pi(x)p_v,p_v\rangle,$ where $\langle\cdot,\cdot\rangle$ is the inner product of $H.$
It is clear that this weight is normal, semi-finite, faithful, and sends $p_v$ to $\mu_V(v)^2$ for any $v\in V^+.$
Consider a positive operator $y\in Gr\Pl\boxtimes Gr\Pl.$
Observe that
\begin{align*}
\sum_{v\in V^+} \langle  \pi(y) p_v , p_v \rangle & = \sum_{v\in V^+} \tau_V\circ E(y p_v p_v^*) = \sum_{v\in V^+} \tau_V\circ E(yp_v)\\
& = \tau_V\circ E(y \sum_{v\in V^+} p_v) = \tau_V\circ E(y).
\end{align*}

Consider two loops $l,k$ of length $2n+2m$ and $2t+2s$ in $\Ga$ that start at $v\in V^+$ and $w\in V^+$ respectively.
Consider $x_l$ and $x_k$ the corresponding elements of $S$ given by the inclusion maps $j(n,m)$ and $j(t,s).$
The identity \eqref{equa:traces} of Section \ref{sec:preliminaries} implies that $Tr(x_lx_k)=\delta_{n,t}\delta_{m,s}\mu_V(w)\mu_V(v)=Tr(x_kx_l),$ where $\delta_{n,t}$ is the Kronecker symbol.
We obtain by density that $Tr$ is tracial.
By uniqueness of a \nofa\ tracial weight on a type II factor, we have that $S$ is a finite \VNA\ \IFF\ $Tr(1)<\infty.$
Suppose that $\Gamma$ is finite.
Then $Tr(1)=\sum_{v\in V^+} Tr(p_v)=\sup_{v\in V^+}Tr(p_v) \vert V^+\vert<\infty,$ where $\vert V^+\vert$ is the cardinal of $V^+.$
Hence, $S$ is a II$_1$ factor.
Suppose that $\Gamma$ is infinite.
Assume that $\{\mu_V(v): v\in V^+\}$ is unbounded.
Then, $Tr(1)\geqslant \sup_{v\in V^+} \mu_V(v)^2=\infty.$
Assume that $\{\mu_V(v): v\in V^+\}$ is bounded by $C>0.$
Since $\delta^{-1}\leqslant\mu(a)\leqslant \delta$ and $\mu(a)=\mu_V(t(a))/\mu_V(s(a))$ for any $a\in C_1$, we obtain that $\{\mu_V(v): v\in V^+\}$ is bounded below by a constant $D>0.$
Then, $Tr(1)=\sum_{v\in V^+} Tr(p_v)=\sum_{v\in V^+} \mu_V(v)^2\geqslant D \vert V^+\vert=\infty$.
Therefore, $S$ is a II$_\infty$ factor if $\Gamma$ is infinite.
\end{proof}
We denote by $L^2(S_k,Tr)$ the GNS \HS\ associated to $S_k$ and $Tr$ and identify it with the \HS\ $H_k$.

\subsection{Planar algebras contained in a \BGPA}\label{sec:subalgebras}

Consider a \PA\ $\Ql$ which embeds in the \BGPA\ $\Pl.$
We identify $\Ql$ and its image in $\Pl.$
Consider the $*$-algebras $Gr_k\Ql, Gr_k\Ql^\op,$ and $Gr_k\Ql\boxtimes Gr_k\Ql$ that we identify with subalgebras of $Gr_k\Pl, Gr_k\Pl^\op,$ and $Gr_k\Pl\boxtimes Gr_k\Pl$ respectively.
Denote by $S_k(\Ql)$, $M_k(\Ql)$, and $M_k(\Ql)^\op$ the von Neumann subalgebras of $S_k$ generated by $Gr_k\Ql\boxtimes Gr_k\Ql$,  $Gr_k\Ql$, and $Gr_k\Ql^\op$ respectively.
Let $T_k(\Ql)=M_k(\Ql)\vee M_k(\Ql)^\op$ be the von Neumann subalgebra of $S_k$ generated by $M_k(\Ql)$ and $M_k(\Ql)^\op.$

\begin{prop}\label{prop:irreducible_subfactor}
Let $\J$ be the \TLJ\ \PA\ included in $\Pl$, i.e. the planar subalgebra of $\Pl$ generated by tangles without inner discs.
Then, $S_k(\J)\subset S_k$ is an irreducible subfactor.
Moreover, $T_k(\J)'\cap S_k= Z(T_k)$.
\end{prop}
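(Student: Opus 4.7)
The plan is to deduce the first assertion from the ``Moreover'' claim and to prove the latter by a corner analysis using Proposition \ref{prop:vna}. Since $T_k(\J)\subset S_k(\J)\subset S_k$, the inclusion $S_k(\J)'\cap S_k\subset T_k(\J)'\cap S_k$ is automatic, so once $T_k(\J)'\cap S_k=Z(T_k)\cong\ell^\infty(\Ve)$ is established, the relative commutant of $S_k(\J)$ in $S_k$ is reduced to diagonal operators $\sum_v f(v)p_v$, which can then be pinned down using connectedness of $\Ga$.

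For $T_k(\J)'\cap S_k=Z(T_k)$, the inclusion $\supset$ is immediate: $Z(T_k)\subset T_k'\cap S_k$ by Proposition \ref{prop:vna}(3), and $T_k(\J)\subset T_k$. For the reverse inclusion, pick $x\in T_k(\J)'\cap S_k$ and analyze the corners $p_vxp_w$ for $v,w\in\Ve$. By Proposition \ref{prop:vna}(1)--(2), one has $p_vS_kp_v=T_k(v)\cong M_k(v)\ootimes M_k(v)^{\op}$, and the compressed subalgebra $p_vT_k(\J)p_v$ is generated by $M_k(\J)(v)$ and $M_k(\J)(v)^{\op}$. The core technical input is the irreducibility of the Temperley-Lieb subfactor $M_k(\J)(v)\subset M_k(v)$; this is the analogue for $\J$ of \cite[Theorem 3.3]{JSW} and is the analytic heart of the proof. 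Granting it, Tomita's commutation theorem for factor tensor products gives
\[ (p_vT_k(\J)p_v)'\cap T_k(v)\;=\;\bigl(M_k(\J)(v)'\cap M_k(v)\bigr)\ootimes\bigl(M_k(\J)(v)'\cap M_k(v)\bigr)^{\op}\;=\;\C p_v, \]
so $p_vxp_v\in\C p_v$ for every $v\in\Ve$. For the off-diagonal corners, I would exploit the ``identity-type'' Temperley-Lieb elements $y_n:=j_k(n,0)(1_{n+2k})\in M_k(\J)$: their nonzero components in the decomposition $S_k=\bigoplus_{v,w}p_vS_kp_w$ are supported on pairs $(v,w)$ linked by length-$n$ paths of $\Ga$, so the relation $xy_n=y_nx$ combined with the already established diagonal form on each corner forces $p_vxp_w=0$ for $v\neq w$, yielding $x\in Z(T_k)$.

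For the first assertion, take $x\in S_k(\J)'\cap S_k$. By what we just proved, $x=\sum_v f(v)p_v\in Z(T_k)$. Since $x$ also commutes with the through-strand elements $y_n\in M_k(\J)\subset S_k(\J)$, the same analysis now reads $f(v)=f(w)$ whenever $v$ and $w$ are connected by a path of length $n$ in $\Ga$; by connectedness, $f$ is constant, so $S_k(\J)'\cap S_k=\C$. This simultaneously proves that $S_k(\J)$ is a factor (as $Z(S_k(\J))\subset S_k(\J)'\cap S_k=\C$) and that $S_k(\J)\subset S_k$ is irreducible. The main obstacle is to establish the irreducibility of the TL subfactor $M_k(\J)(v)\subset M_k(v)$ in the present generality of weighted graphs; I would adapt the argument of \cite[Theorem 3.3]{JSW}, expressing arbitrary elements of $M_k(v)$ as limits of words in Jones projections and vertex loops and using the explicit trace formula from Proposition \ref{prop:vna}(4) to check that any element of the relative commutant has trivial pairing with the generators.
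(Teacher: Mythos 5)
Your overall architecture---deduce the irreducibility of $S_k(\J)\subset S_k$ from the computation of $T_k(\J)'\cap S_k$, handle the diagonal corners $p_vxp_v$ by a JSW-type irreducibility statement at each vertex, then use connectedness of $\Ga$---matches the paper's, and the input you defer (irreducibility of the Temperley--Lieb subalgebra in each corner, via the orthogonal/coarse-bimodule method) is indeed the analytic heart; the paper runs it with the even smaller algebra $A$ generated by the single cup $\cup\in D_k(1,0)$ and cap $\cap\in D_k(0,1)$, decomposing $L^2(S_k,Tr)$ into $A$-bimodules as in \cite[Theorem 4.9]{JSW}. (Minor point: the relevant results of \cite{JSW} are Theorem 4.9 and Corollaries 4.11--4.12; \cite[Theorem 3.3]{JSW} is the boundedness statement.)

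However, your treatment of the off-diagonal corners $p_vxp_w$, $v\neq w$, has a genuine gap. The elements $y_n=j_k(n,0)(1)$ you propose lie in $D_k(n,0)\subset M_k(\J)\subset T_k$, and $p_v\in Z(T_k)$ by Proposition \ref{prop:vna}; hence $p_vy_np_w=y_np_vp_w=0$ whenever $v\neq w$. These elements are purely diagonal with respect to $\{p_v\}$, so the relation $xy_n=y_nx$ imposes no constraint on $p_vxp_w$; for the same reason your last step collapses, since $x=\sum_vf(v)p_v\in Z(T_k)$ commutes with $y_n\in T_k$ automatically and no condition on $f$ results. In fact \emph{no} element of $T_k(\J)$ has a nonzero off-diagonal corner, so the equality $T_k(\J)'\cap S_k=Z(T_k)$ cannot be obtained by intertwining against off-diagonally supported elements of $T_k(\J)$: the mechanism has to be the bimodule one, namely that $p_wL^2(S_k,Tr)p_v$ decomposes over the cup/cap algebra $A\subset T_k(\J)$ into $L^2(Ap_v)$ (only when $w=v$) plus multiples of the coarse $A$-bimodule, so that the $A$-central vector $p_vxp_w$ must vanish for $v\neq w$. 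For the final step, the elements that do connect distinct vertices are those of $D_k(n,m)$ with $n,m\geq 1$; the paper uses $\Vert\in D_k(1,1)$, for which $p_v\Vert p_w\neq0$ when $v=w$ or $d(v,w)=2$. These lie in $S_k(\J)$ but not in $T_k(\J)$, which is exactly why the first assertion is stronger than the ``Moreover'' and why your choice of through-strand elements must be corrected before the connectedness argument can run.
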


\begin{proof}
We assume that $k=0$ and drop the subscript $k$.
The general case can easily be deduced.
Consider the element $\cup\in D(1,0)$ and $\cap\in D(0,1).$
Denote by $C$ and $C^\op$ the abelian \VNA s generated by $\cup$ and $\cap$ respectively.
Let $A=C\vee C^\op$ be the abelian \VNA\ generated by $C$ and $C^\op.$
Consider the \VNA\ $B=A\vee Z(T)$ generated by $A$ and the set of projections $\{p_v:v\in V^+\}$.
We claim that the relative commutant $S\cap A'$ is contained in $B$.
Consider the following subspaces of $S$:
$$W^{t,n}(v)= \{x\in D(n,0) p_v: \diagWtn\},$$
$$W_{b,m}(v)= \{x\in D(0,m) p_v: \diagWbm\}, \text{ and }$$
$W^{t,n}_{b,m}(v,w)$ the space of $x\in p_wD(n,m) p_v$ such that $$\diagWtnbm, \text { for any } n,m\geqslant 1, v,w\in V^+.$$
Observe that all those spaces are contained in $L^2(S,Tr)$ and are pairwise orthogonal.
Denote by $W^t(v)=\bigoplus_{n\geqslant 1} W^{t,n}(v)$, $W_b(v)=\bigoplus_{m\geqslant 1} W_{b,m}(v)$, and $W^t_b(v,w)=\bigoplus_{n,m\geqslant 1} W^{t,n}_{b,m}(v,w)$ the direct sum of those spaces inside $L^2(S,Tr)$ for any $v,w\in V^+.$
Let $H^t(v), H_b(v)$, and $H^t_b(v,w)$ be the $A$-bimodules generated by $W^t(v), W_b(v)$, and $W^t_b(v,w)$ respectively for any $v,w \in V^+.$
Consider the $A$-bimodule generated by $D(0,0)$ inside $L^2(S,Tr).$
It is isomorphic to $L^2(A)\otimes \ell^2(V^+)$ as a $A$-bimodule. 
We identify those two bimodules.
A similar proof to \cite[Theorem 4.9]{JSW} shows that we have the following decomposition into $A$-bimodules:
\begin{equation}\label{equa:decompo}
L^2(S,Tr)=(L^2(A)\otimes \ell^2(V^+))\oplus\bigoplus_{v\in V^+} H^t(v)\oplus\bigoplus_{v\in V^+} H_b(v)\oplus\bigoplus_{v,w\in V^+} H_b^t(v,w)
\end{equation}
Consider $x\in S\cap A'$ and two different even vertices $v\neq w$.
Observe that $p_v$ and $p_w$ commute with $A$.
Furthermore, $p_vxp_w$ is in $L^2(S,Tr).$
Hence, $p_vxp_w$ is in $L^2(S,Tr)\cap A'.$
The equality \eqref{equa:decompo} implies that $p_vxp_w$ is in $H^t_b(v,w).$
Therefore, $p_vxp_w$ is a $A$-central vector of $H^t_b(v,w).$
Following \cite[Theorem 4.9]{JSW}, we can prove that the $A$-bimodule $H^t_b(v,w)$ is isomorphic to $L^2(A)\otimes W_b^t(v,w)\otimes L^2(A)$.
Therefore, it is isomorphic to a direct sum of the coarse $A$-bimodule $L^2(A)\otimes L^2(A).$
Thus, $H^t_b(v,w)$ does not admit any nonzero $A$-central vectors.
Therefore, $p_vxp_w=0$ for any $v\neq w.$
The element $p_vxp_v$ is a $A$-central vector of $L^2(S,Tr).$
The equality \eqref{equa:decompo} implies that $p_vxp_v\in L^2(A p_v)\oplus H^t(v)\oplus H_b(v)\oplus H^t_b(v,v).$
By the previous argument we have that the orthogonal component of $p_vxp_v$ inside $H^t_b(v,v)$  is equal to $0$.
The \VNA\ $A$ is isomorphic to $C\ootimes C^\op$.
Let $\xi$ be the orthogonal component of $p_vxp_v$ in $H^t(v)$.
It is a $C$-central vector.
We can prove that the $A$-bimodule $H^t(v)$ is isomorphic to $(L^2(C)\otimes W^t(v)\otimes L^2(C))\otimes L^2(C^\op)$.
In particular, the $C$-bimodule $H^t(v)$ is isomorphic to a direct sum of the coarse bimodule $L^2(C)\otimes L^2(C)$.
Therefore, $\xi=0.$
A similar argument shows that the orthogonal component of $p_vxp_v$ in $H_b(v)$ is equal to $0$.
We obtain that $p_vxp_v$ is in the $A$-bimodule generated by $p_v.$
Therefore, $x$ belongs to $B$.
This proves the claim.

Consider $x\in S\cap T(\J)'$.
Since $A$ is contained in $T(\J)$, we have that $x\in A'\cap S\subset B.$
The element $p_vx$ is in $L^2(Ap_v)$ for any $v\in V^+.$
It commutes with the two elements $$\diagdblecup \text{ and } \diagdblecap.$$
A similar argument to \cite[Corollary 4.11]{JSW} shows that $p_vx\in \C p_v$ for any $v\in V^+.$
Since $x=\sum_{v\in V^+}p_v x$, we obtain that $x\in Z(T).$
In particular, $T(\J)'\cap S=Z(T)$.

Suppose that $x\in S\cap S(\J)'.$
By the argument of above, we have that $x\in Z(T).$
Hence, there exists a bounded function $f:V^+\loriar\C$ such that $x=\sum_{v\in V^+}f(v)p_v.$
Consider the element $\Vert\in D(1,1)\subset S(\J)$ which commutes with $x$.
Observe that $p_v\Vert p_w\neq 0$ if $v=w$ or if $d(v,w)=2$, where $d$ is the length metric of the graph $\Ga.$
Therefore, $p_vx\Vert p_w=f(v) p_v\Vert p_w=p_v\Vert x p_w= f(w) p_v\Vert p_w$ for any $v,w\in V^+$ such that $d(v,w)=2.$
This implies that $f(v)=f(w)$ if $d(v,w)=2.$
Since $\Ga$ is connected, we obtain that $f$ is constant.
Therefore, $x\in \C 1.$
\end{proof}

Part of the following proposition can be deduced from \cite{CJS}.
We provide a full proof for the convenience of the reader.

\begin{prop}\label{prop:SQ-factor}
Consider a \SPA\ $\Ql$ which is contained in $\Pl.$
Then $S_k(\Ql)$ is a II$_1$ factor.
If $v\in\Ve$, then the map $tr:S_k(\Ql)\loriar\C, x\longmapsto \mu_V(v)^{-2} \langle \pi_k(x) p_v,p_v\rangle$ is the unique \NFTS\ of $S_k(\Ql).$
\end{prop}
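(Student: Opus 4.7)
My plan is to combine the factor criterion from Proposition~\ref{prop:irreducible_subfactor} with the intrinsic SPA trace of $\Ql$. Since $\Ql$ is a SPA it contains the Temperley-Lieb-Jones planar algebra $\J$, so $S_k(\J)\subset S_k(\Ql)\subset S_k$. Proposition~\ref{prop:irreducible_subfactor} gives $S_k(\J)'\cap S_k=\C$, hence
\[Z(S_k(\Ql))=S_k(\Ql)'\cap S_k(\Ql)\subset S_k(\J)'\cap S_k=\C,\]
so $S_k(\Ql)$ is a factor.

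For the trace, fix $v\in V^\epsilon$ and define $tr(x)=\mu_V(v)^{-2}\langle\pi_k(x)p_v,p_v\rangle$. Unravelling the inner product on $H_k$ and using Proposition~\ref{prop:vna}(4), one rewrites this as $tr(x)=\mu_V(v)^{-2}Tr(xp_v)$; since $Tr(p_v)=\mu_V(v)^2$ this already yields $tr(1)=1$, and $tr$ is clearly a normal positive functional. The key step is to compute $tr$ on the $\sigma$-weakly dense $*$-subalgebra $Gr_k\Ql\boxtimes Gr_k\Ql$ and identify it with the intrinsic SPA trace $E^{\Ql}\colon Gr_k\Ql\boxtimes Gr_k\Ql\to\Ql_0^\epsilon=\C$. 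Because $\Ql_0^\epsilon=\C$ sits inside $\Pl_0^\epsilon$ as $\C\cdot\sum_{w\in V^\epsilon}e_w$, a diagrammatic computation gives $E(xp_v)=E^{\Ql}(x)\,e_v$ for $x\in Gr_k\Ql\boxtimes Gr_k\Ql$, so $tr(x)=E^{\Ql}(x)$ on this subalgebra. Since $E^{\Ql}$ is tracial by the standard SPA argument (cf.~\cite{CJS}), combining joint $\sigma$-strong continuity of multiplication on bounded sets with Kaplansky density and the normality of $tr$ propagates the identity $tr(xy)=tr(yx)$ to all of $S_k(\Ql)$.

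A non-zero normal positive tracial functional on a factor is automatically faithful, since its support projection is central and non-zero, hence equal to $1$. So $tr$ is a NFTS on the factor $S_k(\Ql)$, which is therefore finite; being infinite-dimensional (it already contains $S_k(\J)$) it is of type II$_1$, and uniqueness of the NFTS is automatic. The main obstacle is the diagrammatic identity $E(xp_v)=E^{\Ql}(x)\,e_v$, which requires carefully tracking how multiplication by $p_v=j_k(0,0)(e_v)$ inside $Gr_k\Pl\boxtimes Gr_k\Pl$ interacts with the closure operation defining $E$, ensuring that the $\Pl_0^\epsilon$-valued map $E$ applied to an element of $Gr_k\Ql\boxtimes Gr_k\Ql$ restricts, after multiplication by $p_v$, to precisely the $v$-component of the intrinsic $\Ql$-trace.
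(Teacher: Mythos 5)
Your proposal is correct and follows essentially the same route as the paper: factoriality from $S_k(\J)'\cap S_k=\C$ via Proposition~\ref{prop:irreducible_subfactor}, and the trace obtained by identifying the vector state at $p_v$ with the intrinsic planar-algebra trace on the dense subalgebra $Gr_k\Ql\boxtimes Gr_k\Ql$ (where the paper carries out explicitly, via sphericality and the traciality of $\tau_l$, the step you delegate to the ``standard SPA argument''), then extending by density. Your explicit remark that a normal tracial state on a factor is automatically faithful is a small clarification the paper leaves implicit.
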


\begin{proof}
Let $\J$ be the \TLJ\ \PA\ with modulus $\delta$.
We have a chain of inclusions $\J\subset \Ql\subset \Pl$.
This implies that we have the chain of inclusions $S_k(\J)\subset S_k(\Ql) \subset S_k(\Pl)=S_k.$
We obtain that $S_k(\Ql)$ is a factor since $S_k(\J)\subset S_k(\Pl)$ is an irreducible subfactor by Proposition \ref{prop:irreducible_subfactor}.

For any vertex $v\in\Ve$ we consider the linear functional $$tr_v:S_k(\Ql)\loriar\C, x\longmapsto \mu_V(v)^{-2} \langle \pi_k(x) p_v,p_v\rangle.$$
The linear function $tr_v$ is a normal state since it is a vector state.
Denote by $B$ the $*$-algebra $Gr_k\Ql\boxtimes Gr_k\Ql.$

Since $\Ql$ is a \SPA, we have that $\Ql^\ep$ is one dimensional and can be identified to the space of constant functions of $\Pl^\ep\simeq \ell^\infty(\Ve).$
Consider $b\in B$ and the element $E(b)\in\Ql^\ep,$ where $E:Gr_k\Ql\boxtimes Gr_k\Ql\loriar\Pl_0^\ep$ is the map defined in Section \ref{sec:construction}.
Observe that $tr_v(b)$ is the value of $E(b)$ at the vertex $v$.
Since $E(b)$ is constant, we have that $tr_v(b)=tr_w(b)$ for any $v,w\in \Ve.$
By density, we obtain that $tr_v=tr_w$ for any $v,w\in\Ve.$

We fix $v\in \Ve.$ Let us show that $tr:=tr_v$ is tracial.
By density, it is sufficient to show that $tr(ab)=tr(ba)$ for any $a,b\in B.$
Consider $a,b\in B$ and denote by $a_{n,m}$ and $b_{n,m}$ their $(n,m)$-component in $D_k(n,m), n,m\geqslant 0.$
Recall that $Tr$ is the weight of $S_k$ defined in Proposition \ref{prop:vna}.4.
Observe that 
\begin{align*}
tr(ab) & = \sum_{n,m,i,j\geqslant 0} tr(a_{n,m} b_{i,j}) = \sum_{n,m,i,j\geqslant 0} \mu_V(v)^{-2} Tr(a_{n,m}b_{i,j}p_v)\\
 & = \sum_{n,m\geqslant 0} \mu_V(v)^{-2} Tr(a_{n,m}b_{n,m}p_v) \text{ since } a_{n,m}\perp b_{i,j}p_v \text{ in $H_k$ if } (n,m)\neq (i,j)\\
& = \sum_{n,m\geqslant 0} tr(a_{n,m}b_{n,m}).
\end{align*}
Hence, it is sufficient to show that $tr(ab)=tr(ba)$ for $a,b\in D_k(n,m), n,m\geqslant 0$.
We fix $n,m\geqslant 0, a,b\in D_k(n,m), $ and $x,y \in \Ql^+_{2k+n+m}$ such that $j_k(n,m)(x)=a, j_k(n,m)(y)=b.$
Since $\Ql$ is spherical, we have that $tr(ab)=\tau_l(xy)$, where $xy$ is the product of $x$ and $y$ in the planar algebra $\Ql$ and where $\tau_l$ is the left trace of $\Ql.$
By traciality of $\tau_l$, we obtain that $\tau_l(xy)=\tau_l(yx)=tr(ba).$
Therefore, $tr$ is a a normal tracial state of the factor $S_k(\Ql).$
Since $S_k(\Ql)$ is infinite dimensional, we obtain that $S_k(\Ql)$ is a II$_1$ factor and its unique \NFTS\ is $tr.$
\end{proof}

\begin{prop}\label{prop:subalgebras}
Consider a \SPA\ $\Ql$ which is contained in $\Pl.$
The tower of \VNA s $M_0(\Ql)\subset M_1(\Ql)\subset M_2(\Ql)\subset\cdots$ is isomorphic to the tower constructed in \cite{GJS_1}.
The inclusion $T_k(\Ql)\subset S_k(\Ql)$ is isomorphic to the k-th \SEI\ constructed in \cite{CJS}.
In particular, the \SPA\ of $M_0(\Ql)\subset M_1(\Ql)$ is isomorphic to $\Ql$ and the \SEI\ associated to $M_k(\Ql)\subset M_{k+1}(\Ql)$ is isomorphic to $T_{k+1}(\Ql)\subset S_{k+1}(\Ql).$
\end{prop}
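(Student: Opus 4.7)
The plan is to verify that the construction of Section \ref{sec:construction}, restricted to the planar subalgebra $\Ql\subset\Pl$, reproduces on the nose the tower of \cite{GJS_1} and the symmetric enveloping inclusions of \cite{CJS} associated to $\Ql$.

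First I would identify the underlying $*$-algebras. Since $\Ql\subset\Pl$ is by assumption an inclusion of planar algebras and the Bacher product and the dagger involution of Section \ref{sec:construction} are defined by planar tangles, the graded vector spaces $Gr_k\Ql$, $Gr_k\Ql^\op$ and $Gr_k\Ql\boxtimes Gr_k\Ql$ are $*$-subalgebras of $Gr_k\Pl$, $Gr_k\Pl^\op$ and $Gr_k\Pl\boxtimes Gr_k\Pl$; as such they coincide on the nose with the abstract $*$-algebras built directly from $\Ql$ in \cite{GJS_1,CJS}.

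Next I would match the traces. Both constructions produce their von Neumann algebras as the GNS completion of the above $*$-algebras with respect to a canonical faithful tracial state. By Proposition \ref{prop:SQ-factor}, $S_k(\Ql)$ is a II$_1$ factor whose unique \NFTS\ is $tr(x)=\mu_V(v)^{-2}\langle\pi_k(x)p_v,p_v\rangle$ for any $v\in V^\ep$. Orthogonality of $H_k=\bigoplus_{n,m}H_k(n,m)$ implies that $tr$ vanishes on $D_k(n,m)\cap Gr_k\Ql\boxtimes Gr_k\Ql$ when $(n,m)\neq(0,0)$. On the remaining diagonal piece $D_k(0,0)\cap Gr_k\Ql\boxtimes Gr_k\Ql=\Ql_{2k}^+$, sphericality of $\Ql$ combined with \eqref{equa:traces} forces $E(x)\in\Pl_0^\ep$ to be the constant function with value $\delta^{-2k}\tau_\Ql(x)$, and a short calculation then yields $tr(x)=\delta^{-2k}\tau_\Ql(x)$, which is precisely the trace normalisation used in \cite{GJS_1,CJS}.

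Since the GNS completion of a $*$-algebra with respect to a faithful tracial state is unique up to canonical $*$-isomorphism, the identity on $Gr_k\Ql\boxtimes Gr_k\Ql$ extends to an isomorphism between the CJS von Neumann algebra for $\Ql$ and $S_k(\Ql)$ that sends $Gr_k\Ql$, $Gr_k\Ql^\op$ and $Gr_k\Ql\otimes_\C Gr_k\Ql^\op$ to $M_k(\Ql)$, $M_k(\Ql)^\op$ and $T_k(\Ql)$ respectively. The connecting maps $i_k$ are defined by the same planar tangle (inserting two horizontal strands in the middle and dividing by $\delta^2$) on both sides, so these isomorphisms intertwine the inclusions $M_k(\Ql)\subset M_{k+1}(\Ql)$ and $T_k(\Ql)\subset T_{k+1}(\Ql)$. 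The concluding claims about the \SI\ of $M_0(\Ql)\subset M_1(\Ql)$ and the \SEI\ of $M_k(\Ql)\subset M_{k+1}(\Ql)$ then follow from \cite[Theorem 3.3]{CJS}. The main obstacle will be the trace computation: one must carefully exploit sphericality of $\Ql$ inside the non-spherical BGPA $\Pl$ (whose zero-box space is the large algebra $\ell^\infty(V^\pm)$) and track the normalisation constants; everything else then reduces to bookkeeping with planar tangles.
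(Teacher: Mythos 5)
Your proposal is correct and follows essentially the same route as the paper: identify the $*$-algebra $Gr_k\Ql\boxtimes Gr_k\Ql$ with the one underlying the CJS/GJS construction via the planar-tangle definitions, match the traces using sphericality of $\Ql$ (which makes $E$ land in the constants), and conclude by uniqueness of the von Neumann completion of a $*$-algebra acting boundedly with a faithful trace, citing \cite[Theorem 8]{GJS_1} and \cite[Theorem 3.3]{CJS} for the final claims. The only point the paper is more explicit about is that the Bacher product here matches the second (``$\star$'') multiplication of \cite{CJS} rather than their primary one, so the identification passes through their trace-preserving isomorphism $X$ between the two product structures; this is bookkeeping already supplied by \cite{CJS} and does not affect the validity of your argument.
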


\begin{proof}
Let $\Ql$ be a \SPA\ contained in $\Pl.$
We put $B=Gr_k\Ql\boxtimes Gr_k\Ql$.
In \cite{CJS}, to $\Ql$ is associated a tracial $*$-algebra $(V,\wedge,\dag,Tr)$ where $\wedge$ is the multiplication, $\dag$ the anti-linear involution, and $Tr$ a tracial linear functional.
They also consider another tracial $*$-algebra $(W,\star,\dag,Tr')$ where $W$ is a copy of $V$ as a vector space.
They provide a map $X:V\loriar W$ which is an isomorphism of tracial $*$-algebras and consider a projection $p_{k,+}\in V$ that satisfies that $X(p_{k,+})=p_{k,+}$.
Observe that the vector spaces $p_{k,+}Wp_{k,+}$ and $B$ are equal.
Moreover, the $*$-structures of $(p_{k,+}Wp_{k,+},\star,\dag)$ and $(B,\cdot,\dag)$ are defined by the same planar tangles.
If we identify $\Ql_0^\ep$ with the complex numbers inside $\Pl^\ep_0$, then we have that our trace $tr:B\loriar\C$ considered in Proposition \ref{prop:SQ-factor} is equal to the restriction to $B$ of the map $E:Gr_k\Pl\boxtimes Gr_k\Pl\loriar\Pl_0^\ep$ defined in Section \ref{sec:construction}.
Observe that by definition and the fact that $\Ql$ is spherical, we have that $\delta^{-2k} Tr' \vert_{ p_{k,+} W p_{k,+} }$ and $E\vert_B$ are the exact same maps.
Therefore, the tracial $*$-algebras $(p_{k,+}Wp_{k,+},\star,\dag,Tr')$ and $(B,\cdot,\dag,tr)$ are isomorphic.
Thus $(B,\cdot,\dag,tr)$ is isomorphic to $(p_{k,+}Vp_{k,+},\wedge,\dag,Tr)$.
It is proved in \cite{CJS} that $V$ acts by bounded operators on the GNS \HS\ $L^2(V,Tr).$
This implies that the restriction of $\delta^{-2k}Tr$ to $p_{k,+}Vp_{k,+}$ that they denote by $\tau_k\boxtimes\tau_k$ is a faithful tracial state.
Let $L_k$ be the GNS completion of $p_{k,+}Vp_{k,+}$ \WRT\ $\tau_k\boxtimes\tau_k$ (which is denoted by $M_k\boxtimes M_k$ in \cite{CJS}.).
It is proved that $L_k$ is a II$_1$ factor with unique \NFTS\ $\tau_k\boxtimes\tau_k$.

Consider the \VNA\ $S_k(\Ql)$ which is the bicommutant of $B$ inside $S_k$.
The \VNA\ $S_k(\Ql)$ is a II$_1$ factor and its unique \NFTS\ is $tr$.
Therefore, $L_k$ and $S_k(\Ql)$ are both II$_1$ factors which contain a weakly dense $*$-subalgebra $X^{-1}(B)$ and $B$ respectively.
Moreover, $\tau_k\boxtimes\tau_k\circ X^{-1}(b)=tr(b)$ for any $b\in B$.
This implies that there exists an isomorphism $\Phi:L_k\loriar S_k(\Ql)$ such that $\Phi(X^{-1}(b))=b$ for any $b\in B.$
Let $F_k$ be the \VNA\ generated by $Gr_k\Ql$ and $Gr_k\Ql^\op$ inside $L_k.$
It is easy to see that $X^{-1}$ sends $Gr_k\Ql$ and $Gr_k\Ql^\op$ to itself.
This implies that $\Phi(F_k)=T_k(\Ql)$. 
Therefore, $T_k(\Ql)\subset S_k(\Ql)$ is isomorphic to the k-th \SEI\ constructed in \cite{CJS}.

A similar proof shows that the tower of \VNA s $M_0(\Ql)\subset M_1(\Ql)\subset M_2(\Ql)\subset\cdots$ is isomorphic to the tower  constructed in \cite{GJS_1}.
The rest of the proposition follows from \cite[Theorem 8]{GJS_1} and \cite[Theorem 3.3]{CJS}.
\end{proof}

\begin{defi}
If $\Pl$ is a \BGPA\ or a \SPA\ we say that $T_0\subset S_0$ is the \SEI\ associated to $\Pl$ and denote it by $\TS.$
\end{defi}

\section{Fixed point planar algebras}\label{sec:fixedpoint}

\subsection{Actions of the automorphism group of the \BGPA}

Let $(\Ga,\mu)$ be a weighted graph with modulus $\delta>0$.
Consider the \BGPA\ $\Pl$.
An automorphism of $\Pl$ is a sequence of maps $a=(a_n^\pm:n\gO)$ such that $a_n^\pm$ is an automorphism of the \VNA\ $\Pl_n^\pm$ and such that $a$ commutes with the action of the planar tangles.
We denote by $\Aut(\Pl)$ the automorphism group of $\Pl.$

Let $\text{Aut}(\Ga)$ be the group of permutations of the vertices that conserve the number of edges connecting pairs of vertices and send even vertices to even vertices.
We label each n-fold multiple edges by $\{1,\cdots,n\}$ and extend each element of $\text{Aut}(\Ga)$ by a permutation of the edges that preserves the labeling.
Let $\AutG$ be the subgroup of $g\in \Aut(\Ga)$ such that $\mu(ga)=\mu(a)$ for any edge $a$.
We extend the action of $\AutG$ to an action on all paths $C_*$ of $\Ga$ that we denote as follows: $\AutG\times C_*\loriar C_*, (g,a)\mapsto ga.$
Burstein proved that the following map $\gamma:\AutG\times \Pl\loriar\Pl, (g,e_{a,b})\mapsto e_{ga,gb}$ defines an embedding of $\AutG$ into $\AutP$ \cite{Burstein_BGPA}.

Consider the \VNA\ $\Pl_1^+$ and its unitary group $L=U(\Pl_1^+).$
We recall an argument due to Burstein that explains how the group $L$ embeds in $\AutP.$
Consider the map $a\in C_*\mapsto \bar a$ that reverses the orientation of a path.
It induces an injective $*$-morphisms $Rev:\Pl_1^\pm\loriar \Pl_{1}^\mp,e_{a,b}\mapsto e_{\bar a, \bar b}$.
We identify $\Pl_{n}^\pm$ with its image in $\Pl_{n+1}^+$ and $\Pl_n^-$ with its image in $\Pl_{n+1}^+.$
Consider the collection of shift operators $sh:\Pl_n^\pm\loriar \Pl^\pm_{n+2}, e_{a,b}\mapsto \sum_{c\in C_2^\pm} e_{ca,cb}, n\gO.$
If $x\in \Pl^+_1$, we consider the element $x^+_1=x\in \Pl_1^+, x^+_2=x Rev(x)\in \Pl_2^+, x^+_{2n+1}=x^+_{2n} sh^n(x)\in\Pl_{2n+1}^+, x^+_{2n+2}=x^+_{2n+1} sh^n\circ Rev(x)\in \Pl_{2n+2}^+$ and $x_1^-=Rev(x)\in\Pl_1^-, x^-_2=x_1^- sh(x) \in\Pl_2^-, x^-_{2n+1} = x^-_{2n}  sh^n(x_1^-)\in\Pl_{2n+1}^-, x^-_{2n+2}=x^-_{2n+1} sh^{n+1}(x)\in\Pl_{2n+2}^-$ for any $n\geqslant 1$.
Consider $u\in L$, we have that $u^\pm_n$ is a unitary of $\Pl_n^\pm$ for any $n\geqslant 1.$ 
We put $u^\pm_0=1$ and consider the collection of automorphisms $(Ad(u_n^\pm):n\gO)$, where $Ad(u_n^\pm)(a)=u_n^\pm a(u_n^\pm)^*$ for any $n\geqslant 0, a\in \Pl_n^\pm.$
By \cite[Section 3]{Burstein_BGPA}, the map $u\in L\mapsto (Ad(u_n^\pm):n\gO)$ is an embedding of the group $L$ into $\AutP.$

Consider the group $\AutG$ and its action $\gamma$ on $\Pl.$
This action $\gamma$ defines an action of $\AutG$ on $L$ that we continue to denote by $\gamma$.
Consider the semi-direct product $L\rtimes\AutG$ \WRT\ this action.
Burstein proved that those two subgroups generates $\AutP$ and that $\AutP$ is isomorphic to $L\rtimes \AutG$ \cite{Burstein_BGPA}.
We identify $\AutP$ and $L\rtimes\AutG.$
If $g\in\AutP, x\in \Pl_n^\pm$, we denote by $g(x)$ the image of $x$ under the automorphism $g$.

We consider the action of $\AutP$ on $V$ given by 
$$ug \cdot v = gv \text{ for any } u\in\UP,g\in\AutG, \text{ and } v\in V.$$

Here are some remarks regarding \FPS s contained in $\Pl$.
\begin{remark}
Consider a subgroup $G<\AutP$ and the \FPS\ $\Ql=\Pl^G$ under the action of $G.$
\begin{itemize}
\item The collection of \FPS s $(\Ql_n^\pm=(\Pl^\pm_n)^G:n\geqslant 0)$ is a planar algebra.
\item If $G$ is contained in $\AutG$ and acts transitively on $V^+$ and $V^-$, then $\Ql_0^+$ and $\Ql_0^-$ are one dimensional and $\Ql$ satisfies all the axioms of a \SPA\ except the sphericality.
In fact, $\Ql$ is a \SPA\ \IFF\ for any $a\in C_1^\pm$ we have that $\mu(a)\vert\{\alpha\in G\cdot a:s(\alpha)=v^\pm\}\vert=\mu(\bar a)\vert\{\alpha\in G\cdot a:t(\alpha)=v^\mp\}\vert$ where $v^\pm\in V^\pm$ is a fixed pair of vertices.
Indeed, the planar algebra $\Ql$ is spherical \IFF\ $\tau_r$ and $\tau_l$ coincide on $\Ql_1^\pm$.
Recall that $\{e_{a,b}:(a,b)\in ST_1^\pm\}$ is a \SMU\ of $\Pl_1^\pm,$ where $ST^\pm_1=\{(a,b)\in C_1^\pm\times C_1^\pm : s(a)=s(b) \text{ and } t(a)=t(b) \}.$
Therefore, $\Ql_1^\pm$ is equal to the weak closure of $\text{Span} \{  f_{a,b} = \sum_{(\alpha,\beta)\in G\cdot (a,b)} e_{\alpha,\beta}: (a,b)\in ST_1^\pm\}.$
Consider $(a,b)\in ST_1^\pm.$
We have that 
\begin{align*}
\tau_l(f_{a,b}) & = \sum_{ (\alpha,\beta) \in G\cdot (a,b) } \tau_l( e_{ \alpha,\beta } ) = \delta_{a,b} \sum_{ \alpha\in G\cdot a } \mu(\bar a) e_{ t(\alpha) } = \delta_{ a,b } \sum_{ w\in V^\mp } \sum_{ \alpha \in G\cdot a : t( \alpha ) =w } \mu(\bar a) e_w\\
& = \delta_{a,b} \mu(\bar a) \vert\{\alpha\in G\cdot a:t(\alpha)=v^\mp\}\vert.
\end{align*}
A similar computation shows that $\tau_r(f_{a,b})=\delta_{a,b}\mu(a)\vert\{\alpha\in G\cdot a:s(\alpha)=v^\pm\}\vert.$
\item If $\Pl^G$ is a \SPA, then $G$ acts transitively on $V^+$ and $V^-$.
\item In general $\Ql=\Pl^G$ is reducible.
We can deduce the following.
Suppose there exists a subgroup $G<\AutP$ such that the \FPS\ $\Pl^G$ is an irreducible \SPA.
Then, the group $\AutG$ acts transitively on the set of positives edges $C_1^+.$
This implies that the weight $\mu$ is constant on $C_1^+.$
\item If we start with a countable locally finite undirected connected bipartite graph $\Ga$ that can have multiple edges between two vertices and a group $G<\Aut(\Ga)$ that acts transitively on $V^+$ and on $V^-$, then it was observed in \cite[Proposition 2.5]{Arano_Vaes_SF} that there exists a unique weight $\mu:C^1\loriar\R_+^*$ such that $\Pl^G$ is a \SPA, where $\Pl$ is the \BGPA\ associated to $(\Ga,\mu).$
\end{itemize}
\end{remark}

We fix a natural number $k\gO.$

\begin{prop}\label{prop:minimal}
Let $j_k(n,m):\Pl^+_{n+m+2k}\loriar S_k$ be the inclusion of the vector space $\Pl^+_{n+m+2k}$ in $S_k$ for $n,m\geqslant 0.$
There exists a group morphism $ \sigma :\Aut(\Pl) \loriar \Aut(S_k)$ that satisfies that $\sigma_g\circ j_k(n,m)(x)=j_k(n,m)(g(x))$ for any $g\in \Aut(\Pl),n,m\gO,$  and $x\in \Pl^+_{n+m+2k}.$
In particular, $\Aut(\Pl)$ acts on $T_k$, $M_k$ and $M_k^\op.$
Moreover, the action of $\sigma$ is minimal, i.e. the relative commutant $(S_k^{\Aut(\Pl)})'\cap S_k$ is trivial.
\end{prop}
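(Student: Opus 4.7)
The plan is to define the action $\sigma$ first at the $*$-algebra level on $Gr_k\Pl \boxtimes Gr_k\Pl$, then to extend it to $S_k$ by constructing a unitary implementation on $H_k$, and finally to derive minimality from the irreducibility of the \TLJ\ sub-inclusion established in Proposition \ref{prop:irreducible_subfactor}. For $g \in \Aut(\Pl)$ I would start by defining a linear map $\tilde\sigma_g$ on $Gr_k\Pl \boxtimes Gr_k\Pl = \bigoplus_{n,m\gO} D_k(n,m)$ that applies $g$ to each summand through the identification $D_k(n,m) \simeq \Pl^+_{n+m+2k}$. Since the Bacher product and the involution $\dag$ are described by explicit planar tangles with $x$ and $y$ occupying inner discs, and since a PA automorphism commutes with every planar tangle, $\tilde\sigma_g$ is automatically a unital $*$-algebra automorphism satisfying $\tilde\sigma_g \tilde\sigma_h = \tilde\sigma_{gh}$. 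The same argument shows that $\tilde\sigma_g$ preserves the graded pieces $D_k(n,0)$ and $D_k(0,m)$, hence the subalgebras $Gr_k\Pl$, $Gr_k\Pl^\op$, and $Gr_k\Pl \otimes_{\C[\Ve]} Gr_k\Pl^\op$.

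Next, to extend $\tilde\sigma_g$ to $S_k$, I would exploit that the linear map $E$ of Section \ref{sec:construction} is itself given by a planar tangle, so $E \circ \tilde\sigma_g = g \circ E$ on the pre-algebra. The weight $\mu$ being $g$-invariant, the uniqueness (up to a positive scalar) of $\mu_V$ produces a constant $c_g > 0$ with $\mu_V(g \cdot v) = c_g \mu_V(v)$ for every $v \in V$, and consequently $\tau_V \circ E \circ \tilde\sigma_g = c_g^2 \, \tau_V \circ E$. Setting $U_g(\xi) := c_g^{-1} \tilde\sigma_g(\xi)$ for $\xi \in K_k$ then defines an inner-product preserving map on the dense pre-Hilbert subspace of $H_k$, which extends to a unitary $U_g \in B(H_k)$; I would put $\sigma_g := \text{Ad}(U_g)$ and check that it restricts to a $*$-automorphism of $S_k$ sending $j_k(n,m)(x)$ to $j_k(n,m)(g(x))$. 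The group morphism property and the invariance of $T_k$, $M_k$ and $M_k^\op$ then follow by normality from the corresponding statements at the graded level.

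For the minimality statement my key observation would be that the \TLJ\ subalgebra $\J \subset \Pl$ is pointwise fixed by every $g \in \Aut(\Pl)$, since elements of $\J$ come from planar tangles with no inner discs while the $\Aut(\Pl)$-action is an action on the inner discs. Therefore $\tilde\sigma_g$ is the identity on $Gr_k\J \boxtimes Gr_k\J$, and so $S_k(\J) \subset S_k^{\Aut(\Pl)}$. Taking commutants and invoking the irreducibility conclusion of Proposition \ref{prop:irreducible_subfactor} yields
$$(S_k^{\Aut(\Pl)})' \cap S_k \;\subset\; S_k(\J)' \cap S_k \;=\; \C 1,$$
which is precisely minimality. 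The main technical obstacle I expect is the normalization constant $c_g$ needed to turn $\tilde\sigma_g$ into a unitary on $H_k$, and the verification that $\sigma_g$ descends to an automorphism of $S_k$ rather than just a map on the pre-algebra; once that is done, the minimality statement collapses to a one-line containment thanks to the irreducibility of $S_k(\J) \subset S_k$.
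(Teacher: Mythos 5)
Your proposal is correct and follows essentially the same route as the paper: the action is defined on the graded $*$-algebra via the tangle-equivariance of $g$, implemented on $H_k$ by the unitaries $U_g=c_g^{-1}\tilde\sigma_g$ obtained from the scaling constant of $\mu_V$ (the paper writes the same normalization as $\sigma_g/\sqrt{c_g}$ with $c_g=\mu_V(gw)^2/\mu_V(w)^2$), and minimality is deduced exactly as you do, from $S_k(\J)\subset S_k^{\Aut(\Pl)}$ together with the irreducibility of $S_k(\J)\subset S_k$ from Proposition \ref{prop:irreducible_subfactor}. The only presentational difference is that the paper treats the two factors of $\Aut(\Pl)\simeq U(\Pl_1^+)\rtimes\Aut(\Ga,\mu)$ separately and verifies the covariance relation $U_gU_lU_g^*=U_{\gamma_g(l)}$, which your uniform treatment subsumes.
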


\begin{proof}
For any $g\in \Aut(\Pl),n,m\gO,x\in \Pl^+_{n+m+2k}$, we put $\sigma_g( j_k(n,m)(x))=j_k(n,m)(g(x)).$
Since any element $g\in\AutP$ commutes with the action of the planar operad, we obtain that $\sigma_g$ is a $*$-algebra morphism of $\GrkPb.$
Observe that $p_v H_k$ is orthogonal to $p_wH_k$ for two different vertices $v,w\in V^+.$
Since $\sum_{v\in V^+}p_v=1$, we obtain that
\begin{equation}\label{equa:decompoH} H_k=\bigoplus_{v\in V^+} p_vH_k.\end{equation}
Consider $g\in\AutG$ and $x\in \GrkPb$ such that $\{v\in V : p_vx\neq 0\}$ is finite.
Note that $Tr(xx^*)<\infty.$
Observe that $Tr\circ\sigma_g= c_g \cdot Tr$ for any $g\in G$, where $c_g$ is the ratio $\frac{Tr(p_{gw})}{Tr(p_w)}=\frac{\mu_V(gw)^2}{\mu_V(w)^2}$ which does not depend on $w\in V^+.$
We put $U_g(x) = \sigma_g(x)/\sqrt{c_g}$.
Observe that 
$$c_g \Vert U_g(x)\Vert_2^2 = \Vert \sigma_g(x)\Vert_2^2 = \tau_V\circ E\circ \sigma_g(xx^*) = \tau_V \circ \sigma_g \circ E(xx^*),$$
since $g$ commutes with the action of the planar operad.
Since $\tau_V\circ \sigma_g = c_g\cdot \tau_V,$ we obtain that $\Vert U_g(x)\Vert_2^2=\tau_V\circ E(xx^*)=\Vert x\Vert_2^2.$
This implies that $U_g$ extends as an isometry of $H_k$.
By definition, $U_{ g^{-1} }\circ U_g(x)=x$ for any $n,m\gO$ and $x$ in the range of $j_k(n,m).$
This implies that $U_{ g^{-1} }\circ U_g$ is the identity operator.
By symmetry, $U_g$ is a unitary of $H_k$ and $U_g^*=U_{g^{-1}}.$
By definition and by a density argument we obtain that $U_g U_h=U_{gh}$ for any $g,h\in\AutG.$
Consider the map $Ad(U_g)(x)=U_gxU_g^*$ for any $g\in \AutG, x\in B(H_k).$
Consider $x\in\GrkPb,g\in\AutG,$ and $\xi\in H_k\cap \GrkPb$.
We have that $$Ad(U_g)(x)\xi  = U_gxU_g^*\xi = U_g x \sqrt{c_g}\sigma_{g^{-1}}(\xi) =  \sigma_g( x \sigma_{ g^{-1} } ( \xi ) )=\sigma_g(x)\xi.$$
If $g\in L$, we define $U_g(x)=\sigma_g(x)$ for any $x\in H_k\cap \GrkPb.$
A similar proof shows that $Ad(U_g)(x)=\sigma_g(x)$ for any $x\in \GrkPb,g\in L.$
One can check that $U_g U_l U_g^*=U_{\gamma_g(l)}$ for any $g\in\AutG,l\in L.$
This implies that $U:\AutP\loriar U(H_k)$ is a unitary representation.
By density, we obtain that $Ad(U_g)(S_k)=S_k$ for any $g\in\AutP.$
We denote by $\sigma_g$ the restriction to $S_k$ of the automorphism $Ad(U_g), g\in\AutP.$
We have that $\sigma_g\circ j_k(n,m)(x)=j_k(n,m)(g(x))$ for any $g\in \Aut(\Pl),n,m\gO,$  and $x\in \Pl^+_{n+m+2k}.$
Hence, $\Aut(\Pl)$ acts on $T_k$, $M_k$ and $M_k^\op$ by restricting $\sigma_g,g\in\AutP.$

If $\mathcal J$ is the \TLJ\ \PA\ with modulus $\delta$, then $S_k(\mathcal J)$ is contained in the \FPS\ $S_k^{\Aut(\Pl)}.$
By Proposition \ref{prop:irreducible_subfactor}, $S_k(\mathcal J)$ is an irreducible subfactor of $S_k$.
Therefore, $S_k^{\Aut(\Pl)}\subset S_k$ is an irreducible subfactor.
\end{proof}

\begin{prop}\label{prop:fixedpoint}
Consider a subgroup $G<\AutP$ and the \FPPA\ $\Ql:=\Pl^G.$
Assume that $\Ql$ is a \SPA.
Consider the fixed point \VNA s $S_k^G, M_k^G$, and $(M_k^\op)^G$.
We have the equalities $S_k^G=S_k(\Ql), M_k^G=M_k(\Ql)$, and $(M_k^\op)^G=M_k^\op(\Ql),$
where $S_k(\Ql),M_k(\Ql)$, and $M_k^\op(\Ql)$ are the von Neumann subalgebras of $S_k$ defined in Section \ref{sec:subalgebras}.
In particular, $M_0^G\subset M_1^G$ is a \SF\ with \SPA\ isomorphic to $\Ql$.
Furthermore, the \SEI\ of $\Ql$ is isomorphic to $M_0^G\vee (M_0^\op)^G \subset S_0^G.$
\end{prop}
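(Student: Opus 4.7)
The plan is to establish the three von Neumann algebra equalities $S_k^G=S_k(\Ql)$, $M_k^G=M_k(\Ql)$, and $(M_k^\op)^G=M_k^\op(\Ql)$; the subfactor and SEI statements then follow quickly from Proposition~\ref{prop:subalgebras}.

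The forward inclusions $S_k(\Ql)\subseteq S_k^G$, $M_k(\Ql)\subseteq M_k^G$, and $M_k^\op(\Ql)\subseteq (M_k^\op)^G$ are immediate from Proposition~\ref{prop:minimal}. Indeed, the identity $\sigma_g\circ j_k(n,m)(x)=j_k(n,m)(g(x))$ shows that for $g\in G$ and $x\in\Ql^+_{n+m+2k}=(\Pl^+_{n+m+2k})^G$ one has $\sigma_g(j_k(n,m)(x))=j_k(n,m)(x)$. Hence $\sigma_g$ fixes the generating $*$-algebras $Gr_k\Ql\boxtimes Gr_k\Ql$, $Gr_k\Ql$, and $Gr_k\Ql^\op$ pointwise; by normality of $\sigma_g$, the respective weak closures lie inside the fixed point von Neumann algebras.

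For the reverse inclusions I would work spatially on $H_k$. The first step is to observe that the orthogonal decomposition $H_k=\bigoplus_{n,m\geqslant 0}H_k(n,m)$ from Section~\ref{sec:construction} is $G$-invariant, because the unitary $U_g$ built in the proof of Proposition~\ref{prop:minimal} is a rescaling of $\sigma_g$ by $\sqrt{c_g}$ and $\sigma_g$ preserves each $j_k(n,m)(\Pl^+_{n+m+2k})$. Each $H_k(n,m)$ further decomposes into the finite-dimensional corners $j_k(n,m)(e_v\Pl^+_{n+m+2k}e_w)$ indexed by pairs $(v,w)\in V^+\times V^+$, on which $G$ acts by permutation of the indices. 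Summing over $G$-orbits of such pairs (a sum that converges in $L^2$ by Bessel's inequality, since the norm is $G$-invariant on each orbit) gives that $H_k(n,m)^G$ is the $L^2$-closure of the $G$-fixed elements $j_k(n,m)(\Ql^+_{n+m+2k})\cap H_k$. Any $x\in S_k^G$ commutes with every $U_g$ and hence preserves $H_k^G=\bigoplus_{n,m}H_k(n,m)^G$; identifying $H_k$ with the standard form $L^2(S_k,Tr)$ and reducing to finite-trace spectral projections of $|x|$, one deduces that such $x$ is implemented by an element of the weak closure of $Gr_k\Ql\boxtimes Gr_k\Ql$, i.e.\ $x\in S_k(\Ql)$. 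The same argument restricted to $Gr_k\Pl$ or $Gr_k\Pl^\op$ produces the analogous equalities for $M_k^G$ and $(M_k^\op)^G$.

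The in-particular statements are then quick consequences. Applied at $k=0$ and $k=1$, the equalities $M_k^G=M_k(\Ql)$ together with Proposition~\ref{prop:subalgebras} identify $M_0^G\subseteq M_1^G$ with the initial inclusion of the Guionnet--Jones--Shlyakhtenko tower for $\Ql$, whose standard invariant is $\Ql$ itself. Likewise $T_0(\Ql)\subseteq S_0(\Ql)$ is isomorphic to the \SEI\ of $\Ql$ by Proposition~\ref{prop:subalgebras}, and since $T_0(\Ql)=M_0(\Ql)\vee M_0(\Ql)^\op=M_0^G\vee(M_0^\op)^G$ and $S_0(\Ql)=S_0^G$, the \SEI\ of $\Ql$ is precisely $M_0^G\vee(M_0^\op)^G\subseteq S_0^G$. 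The main obstacle is the reverse inclusion $S_k^G\subseteq S_k(\Ql)$: because $\Ql^+_0=\C$ forces $G$ to act transitively on $V^+$, there are no $G$-fixed vertex projections, and because $G$ is not assumed amenable or compact, one cannot produce a conditional expectation $S_k\to S_k^G$ by any Haar-type averaging. The orbit-averaging argument succeeds only at the $L^2$-level (where convergence is automatic via Bessel), and the delicate point is transferring this $L^2$ description back to bounded operators while keeping track of the scaling cocycle $c_g$ controlling how $Tr$ transforms under $\sigma_g$.
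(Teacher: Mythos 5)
Your forward inclusions and your derivation of the ``in particular'' statements from Proposition \ref{prop:subalgebras} are correct and agree with the paper. The gap is in the reverse inclusion $S_k^G\subseteq S_k(\Ql)$, and it is not a removable technicality: the mechanism you propose fails whenever $\Ga$ is infinite (the main case of interest, e.g.\ trees and \biha\ examples). You assert that the $G$-orbit sums converge in $L^2$ ``by Bessel's inequality, since the norm is $G$-invariant on each orbit''; but pairwise orthogonal summands of constant nonzero norm over an \emph{infinite} orbit diverge in $L^2$ --- constancy of the norm along the orbit is exactly the obstruction, not the cure, and every $G$-orbit on $V^+\times V^+$ is infinite because $G$ acts transitively on the infinite set $V^+$. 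Worse, both sides of your claimed identification are trivial when $\Ga$ is infinite: a nonzero $G$-invariant $x\in\Ql^+_{n+m+2k}$ has $E$ of $xx^\dag$ equal to a nonzero \emph{constant} function on $V^\epsilon$ (since $\Ql_0^\epsilon=\C$ sits inside $\ell^\infty(V^\epsilon)$ as the constants), so $\tau_V\circ E(xx^\dag)=\infty$ and $j_k(n,m)(\Ql^+_{n+m+2k})\cap H_k=\{0\}$; likewise the $U$-fixed subspace of $H_k$ vanishes, since the unitaries $U_g$ permute the mutually orthogonal corners $p_vH_kp_w$ isometrically along infinite orbits. (There is also a normalization mismatch you gloss over: an element fixed by $\sigma_g$ is only fixed by $U_g$ up to the factor $\sqrt{c_g}$.) Consequently ``$x\in S_k^G$ preserves $H_k^G$'' is vacuous, and the subsequent reduction to ``finite-trace spectral projections of $|x|$'' is empty as well, because every nonzero projection in $S^G$ has infinite $Tr$-trace when $\Ga$ is infinite.

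The paper's proof circumvents all of this by compressing with a single minimal central projection $p_o$. It first shows (using transitivity of $G$ on $V^+$, a step you omit) that $tr=Tr(p_o)^{-1}Tr(\,\cdot\,p_o)$ is faithful on $S^G$, then embeds $L^2(S^G,tr)$ into $L^2(S,Tr)$ via $\xi\mapsto p_o\xi$, landing in $\bigoplus_{n,m}p_oH(n,m)$ where each corner $p_oH(n,m)=p_oD(n,m)$ is finite dimensional by local finiteness of $\Ga$. From each component one recovers a planar element $d\in e_o\Pl^+_{n+m}$, proves it is $G_o$-invariant, and forms $\tilde d=\sum_{g\in\lGGor}g(d)$: this sum converges in the \emph{strong operator topology} of the \VNA\ $\Pl^+_{n+m}$ (the summands lie in distinct blocks and have uniformly bounded operator norm), not in $L^2(S,Tr)$, where $j(n,m)(\tilde d)$ has infinite norm. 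This yields an honest element of $Gr\Ql\boxtimes Gr\Ql$ whose compression by $p_o$ is the prescribed component; pairing it against a vector $\xi$ orthogonal to $L^2(S(\Ql),tr)$ forces $\xi=0$, and a bounded-vector argument using that $S(\Ql)$ is a II$_1$ factor with unique trace $tr$ (Proposition \ref{prop:SQ-factor}) upgrades $L^2(S^G,tr)=L^2(S(\Ql),tr)$ to $S^G=S(\Ql)$. To repair your outline you would need to replace ``$L^2$-convergence of orbit sums'' by ``SOT-convergence in $\Pl^+_{n+m}$ after compressing by $p_o$'' and supply the faithfulness of $tr$ on $S^G$.
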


\begin{proof}
We assume that $k=0$ and drop the subscript $k$.
The general case can easily be deduced.
Let us show that $S^G=S(\Ql).$
By definition, $S(\Ql)$ is the weak closure of $B:=Gr\Ql\boxtimes Gr\Ql$ inside $S$.
Any element of $B$ is $G$-invariant.
Therefore, $B$ is included in $S^G$ and so does its weak closure $S(\Ql).$

Consider a vertex $o\in V^+$ and the normal state $tr:S\loriar\C, x\longmapsto Tr(p_o)^{-1}Tr(xp_o).$
By Proposition \ref{prop:SQ-factor}, $S(\Ql)$ is a II$_1$ factor and $tr$ is its unique \NFTS.
In particular, $tr$ is faithful on $S(\Ql).$
Let us show that $tr$ is faithful on $S^G.$
Consider $x\in S^G$ such that $tr(xx^*)=0.$
The action of $G$ on $V^+$ is transitive since $\Pl^G$ is a \SPA.
Hence, for any $v\in V^+$, there exists $g_v\in G$ such that $g o=v.$
Observe that 
\begin{align*}
Tr(xx^*) & = \sum_{v\in V^+} Tr(xx^* p_v) = \sum_{v\in V^+} Tr(xx^* p_{g_v o})\\
& = \sum_{v\in V^+} Tr(\sigma_{g_v}(xx^* p_o)) \text{ since } xx^* \text{ is $G$-invariant}\\
& = \sum_{v\in V^+} \frac{\mu_V(v)^2}{\mu_V(o)^2} Tr(xx^* p_o) = \sum_{v\in V^+} \mu_V(v)^2 tr(xx^*) = 0.
\end{align*}
Since $Tr$ is faithful, we obtain that $x=0.$
Therefore, $tr$ is a \NFS\ on $S^G.$

Let us show that $L^2(S(\Ql),tr)=L^2(S^G,tr),$ where $L^2(S(\Ql),tr)$ and $L^2(S^G,tr)$ are the GNS \HS\ associated to $(S(\Ql),tr)$ and $(S^G,tr).$
Consider $\xi$ in the orthogonal complement of $L^2(S(\Ql),tr)$ inside $L^2(S^G,tr).$
Observe that $Tr((p_o\xi)(p_o\xi)^*)=Tr(\xi\xi^*p_o)=Tr(p_o) tr(\xi\xi^*)<\infty.$
Therefore, $p_o\xi\in L^2(S,Tr).$
The \HS\ $L^2(S,Tr)$ is equal to the direct sum 
$$\bigoplus_{n,m\geqslant 0} H(n,m).$$
Hence, there exists $x^n_m\in H(n,m),n,m\geqslant 0$ such that $p_0\xi=\sum_{n,m\geqslant 0} x^n_m$ where the sum converges in $L^2(S,Tr)$.
Let us fixed $n,m\geqslant 0.$
We have that $x^n_m\in p_oH(n,m).$
Observe that $$\{j(n,m)(e_{a,b}): (a,b)\in ST_{n+m}^+, s(a)=o\}$$ is an orthogonal basis of $p_oH(n,m).$
Since $\Ga$ is locally finite, we obtain that this basis is finite.
Therefore, $p_oH(n,m)$ is finite dimensional and is equal to $p_oD(n,m)$.
Hence, there exists a unique $d\in e_o\Pl_{n+m}^+$ such that $j(n,m)(d)=x^n_m$.
Recall that 
$$\Pl_{n+m}^+=\bigoplus_{v\in V^+, w\in V} B(\ell^2(C_{n+m}^+(v,w))), \text{ where } C_{n+m}^+(v,w)=\{a\in C_{n+m}^+: s(a)=v,t(a)=w\}.$$
We have that $d\in \bigoplus_{w\in V} B(\ell^2(C_{n+m}^+(o,w))).$

If $g\in \AutP,$ then $g(d)\in \bigoplus_{w\in V} B(\ell^2(C_{n+m}^+(go,w))).$
Consider $G_o=\{g\in G:go=o\}$, and a \SR\ $\lGGor$ of the quotient space $G/G_o$.
The elements $p_o$ and $\xi$ are $G_o$-invariants.
Therefore, $p_o\xi$ is $G_o$-invariant as an element of $L^2(S,Tr)$.
Consider the orthogonal projection $P:L^2(S,Tr)\loriar H(n,m)$ and observe that $P$ is $G$-equivariant.
Consider $g\in G_o$.
Since $g$ fixes $o$, we have that $Tr\circ \sigma_g = Tr$ by uniqueness of the trace.
Hence, $\sigma_g$ and $U_g$ coincide on $L^2(S,Tr)\cap S$, where $U_g$ is the unitary defined in the proof of the previous proposition.
Observe that $j(n,m)(g(d)) = \sigma_g \circ j(n,m)(d) = U_g(P(p_o\xi)) = P\circ U_g(p_o \xi) = P(p_o \xi ) = j(n,m)(d).$
Therefore, $d$ is $G_o$-invariant as an element of $\Pl_{n+m}^+$.

We have that $go\neq ho$ for any $g\neq h$ in $\lGGor.$
This implies that the elements of $\{g(d):g\in\lGGor\}$ belongs to distinct summands of the \VNA\ $$\Pl_{n+m}^+=\bigoplus_{v\in V^+, w\in V} B(\ell^2(C_{n+m}^+(v,w))).$$
Moreover, $\Vert g(d)\Vert=\Vert d\Vert$ for any $g\in\AutP$ since $g$ is an automorphism of the \VNA\ $\Pl_{n+m}^+.$
This implies that the sum $\sum_{g\in\lGGor} g(d)$ converges for the \SOT\ to an element $\tilde d\in\Pl_{n+m}^+.$
Let us show that $\tilde d$ is $G$-invariant.
Consider $f=\sum_{r\in R} r(d)$, where $R$ is a \SR\ of $G/G_o$.
For any $r\in R$ there exists $g_r\in \lGGor, h_r\in G_o$ such that $r=g_r h_r.$
Moreover, $\{g_r:r\in R\}=\lGGor.$
Observe that 
$$f=\sum_{r\in R} r(d) = \sum_{r\in R} g_r h_r(d) = \sum_{r\in R} g_r(d) = \sum_{g\in \GGo} g(d) = \tilde d.$$
Therefore, $\tilde d$ does not depend on the choice of the \SR\ $\lGGor.$
Consider $g\in G.$
We have that $g(\tilde d) = \sum_{h\in\lGGor} gh(d) = \sum_{r\in R} r(d),$
where $R=\{gh:h\in\lGGor\}$.
The set $R$ is a \SR\ of $G/G_o.$
Therefore, $g(\tilde d)=\tilde d.$
Hence, $\tilde d$ is a $G$-invariant element of $\Pl_{n+m}^+.$
This means that $\tilde d\in\Ql_{n+m}^+.$

We put $y^n_m= j(n,m)(\tilde d).$
By definition, $y^n_m\in B.$
By assumption, $tr(\xi (y^n_m)^*)=0.$
Observe that 
\begin{align*}
Tr(p_o) tr(\xi\xi^*) & = Tr(\xi\xi^*p_o)=Tr(\xi (p_o\xi)^*)=\sum_{k,l\geqslant 0} Tr(\xi(x^k_l)^*)\\
& = \sum_{k,l\geqslant 0} Tr(\xi (y^k_l)^*p_o) \text{ since } p_oy^k_l=x^k_l\\
& = Tr(p_o) \sum_{k,l\geqslant 0} tr(\xi (y^k_l)^*)=0.
\end{align*}
We obtain that $L^2(S(\Ql),tr)=L^2(S^G,tr)$ since the orthogonal complement of $L^2(S(\Ql),tr)$ inside $L^2(S^G,tr)$ is trivial.
Consider $\xi\in S^G$.
Since $tr$ is a faithful state, we have that $\xi\in L^2(S^G,tr)$.
Hence, $\xi\in L^2(S(\Ql),tr)$.
Let us show that $\xi$ is a bounded vector of the II$_1$ factor $S(\Ql).$
Consider the map $\theta:S(\Ql)\loriar L^2(S(\Ql),tr),y\longmapsto y\xi.$
Observe that 
\begin{align*}
tr(\theta(y)\theta(y)^*) & = Tr(p_o)^{-1} Tr(y\xi\xi^* y^* p_o)\leqslant Tr(p_o)^{-1} \Vert \xi\xi^*\Vert_S Tr(yy^*p_o)\\
& \leqslant \Vert \xi\Vert_S^2 tr(yy^*) \text{ for any } y\in S(\Ql).
\end{align*}
Therefore, $\theta$ extends as a bounded linear operator from $L^2(S(\Ql),tr)$ to $L^2(S(\Ql),tr)$.
Since $S(\Ql)$ is a II$_1$ factor and $tr$ is its unique \NFTS, we obtain that $\xi\in S(\Ql)$ by \cite[Theorem 1.2.4.2]{Jones_Sunder_subfactors}.
Therefore, $S(\Ql)=S^G.$

A similar proof show that $M(\Ql)=M^G$ and $M(\Ql)^\op=(M^\op)^G.$
By \cite{GJS_1} and Proposition \ref{prop:subalgebras}, $\Ql$ is isomorphic to the \SPA\ of $M_0(\Ql)\subset M_1(\Ql).$
Therefore, it is isomorphic to the \SPA\ of $M_0^G\subset M_1^G.$
The \SEI\ of $\Ql$ is isomorphic to $M_0(\Ql)\vee M_0(\Ql)^\op \subset S_0(\Ql)$ by Proposition \ref{prop:subalgebras}.
Therefore, it is isomorphic to $M_0^G\vee (M_0^\op)^G\subset S_0^G.$
\end{proof}

\subsection{Proof of Theorem \ref{theo:introone}}
Let $(\Ga,\mu)$ be a weighted graph with modulus $\delta$ and let $G<\AutP$ be a countable or closed subgroup.
Let $\Pl$ be the \BGPA\ associated to $(\Ga,\mu)$ and let $\TS$ be its \SEI.
Consider the \FPS\ $\Ql=\Pl^G$ that we assume to be a \SPA.
This implies that the action of $G$ on $V^+$ is necessarily transitive and that $\TS$ is an irreducible \SF.
If $K<L$ is an inclusion of groups, we denote by $\langle L/K\rangle$ (resp. $\langle K\backslash L/K\rangle$) a \SR\ of the coset space $L/K$ (resp. $K\backslash L/K$).
If $v,w\in V^+$, then $G_v$ (resp. $G_{v,w}$) denotes the subgroup of $G$ that fixes the vertex $v$ (resp. fixes the vertices $v$ and $w$).
Let $o\in V^+$ be a fixed vertex.
Note that $G/G_o$ is in bijection with $V^+$ since $G$ acts transitively on $V^+.$
Hence, $\{p_{go}:g\in\lGGor\}$ is the set of minimal central projections of $T$.

\begin{remark}\label{rem:almost-normal}
\begin{itemize}
\item The subgroup $G_o<G$ is \AN.
Indeed, consider $g\in G.$
We need to show that $G_o\cap gG_og^{-1}$ is a finite index subgroup of $G_o$ and $gG_og^{-1}.$
Observe that $G_o\cap gG_og^{-1}=\Gogo=\{h\in G : hgo=go \text{ and } ho=o\}.$
Consider the map $h\in G_o\longmapsto hgo\in V.$
We have that $G_o /\Gogo$ is in bijection with the orbit $G_o\cdot go.$
But $G_o\cdot go$ is contained in the sphere centered in $o$ with radius equal to the distance between $o$ and $go.$
This sphere is a finite set since $\Ga$ is locally finite.
Therefore, $\Gogo$ is a finite index subgroup of $G_o$.
A similar argument shows that $\Gogo$ is a finite index subgroup of $gG_og^{-1}$.
Therefore, $G_o<G$ is an \AN\ subgroup.
\item By Propositions \ref{prop:SQ-factor} and \ref{prop:fixedpoint}, the unique \NFTS\ of the II$_1$ factor $S^G$ is $tr:S^G\loriar\C, x\mapsto Tr(p_v)^{-1}Tr(p_vx)$ where $v\in V^+.$
\end{itemize}
\end{remark}

The next proposition gives a decomposition of the $T^G$-bimodule $L^2(S^G,tr)$.

\begin{prop}\label{prop:bimodules}
We define the \HS\ $H_g$ which is the closure in $L^2(S,Tr)$ of $p_o S^G p_{g o}$ for any $g\in G.$
\TFAT
\begin{enumerate}
\item The \HS\ $H_g$ is a bifinite $T^G$-bimodule  for any $g\in G.$
\item The $T^G$-bimodules $L^2(S^G,tr)$ and $\bigoplus_{g\in\GGG} H_g$ are isomorphic.
\end{enumerate}
\end{prop}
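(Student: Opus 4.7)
The central observation is that every $p_v$, $v \in V^+$, lies in the center $Z(T) = \ell^\infty(V^+)$ and hence commutes with every element of $T^G$. This immediately makes $H_g$ a $T^G$-bimodule: for $t \in T^G$ and $p_o x p_{go} \in p_o S^G p_{go}$ we have $t \cdot p_o x p_{go} = p_o(tx)p_{go}$ with $tx \in S^G$, and symmetrically on the right. This is also the key to the orbit decomposition in part (2).

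For part (2), I would introduce the $T^G$-bimodule map $\phi : L^2(S^G, tr) \longrightarrow L^2(S, Tr)$, $\phi(x) = p_o x$. The identity $\Vert p_o x \Vert_{L^2(S, Tr)}^2 = Tr(xx^* p_o) = Tr(p_o) \cdot tr(xx^*)$ shows $\phi$ is isometric up to the constant $\sqrt{Tr(p_o)}$, and $T^G$-bimodularity follows from $p_o t = t p_o$ for $t \in T^G$. Its image is $\overline{p_o S^G} \subseteq L^2(S, Tr)$. Using the resolution $1 = \sum_{w \in V^+} p_w$, this image decomposes orthogonally as $\bigoplus_{w \in V^+} \overline{p_o S^G p_w}$. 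The $G$-invariance of $\xi \in L^2(S^G, tr)$ translates to $\sigma_h(p_o \xi p_w) = p_o \xi p_{hw}$ for every $h \in G_o$, so the tuple of matrix entries is $G_o$-equivariant in $w$. The $G_o$-orbits on $V^+$ are in bijection with $\GGG$ via $G_o \cdot go \leftrightarrow G_o g G_o$, with $\Gogo = G_o \cap g G_o g^{-1}$ the stabilizer of $go$, of finite index in $G_o$ by almost-normality (Remark \ref{rem:almost-normal}). Regrouping summands yields $L^2(S^G, tr) \cong \bigoplus_{g \in \GGG} (p_o L^2(S, Tr) p_{go})^{\Gogo}$ as $T^G$-bimodules. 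To conclude part (2), I would identify $H_g$ with $(p_o L^2(S, Tr) p_{go})^{\Gogo}$: the forward inclusion is clear from $\Gogo$-invariance of $p_o S^G p_{go}$; the reverse is shown by averaging a $\Gogo$-invariant representative $\eta$ over $\lGGogor$ to produce a $G$-invariant class in $L^2(S^G, tr)$ whose $(o, go)$-entry recovers $\eta$.

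For part (1), bifiniteness of $H_g$ follows from the \SEI\ structure. By Proposition \ref{prop:fixedpoint}, $T^G \subset S^G$ is the \SEI\ associated to the \SPA\ $\Ql$, so Popa's general decomposition expresses $L^2(S^G, tr)$ as a Hilbert sum of irreducible bifinite $T^G$-bimodules indexed by irreducible objects of $\Ql$. Combined with part (2), each $H_g$ is a $T^G$-subbimodule. Using local finiteness of $\Ga$ to control the dimension of $p_o D_0(n,m) p_{go}$ for each $(n,m)$, together with the fact that the shortest contributing length is bounded below by $d(o, go)$, one verifies that only finitely many irreducible summands appear in $H_g$, each with finite multiplicity. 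Hence $H_g$ is bifinite.

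\textbf{Main obstacle.} The most delicate step is the reverse inclusion in the identification $H_g = (p_o L^2(S, Tr) p_{go})^{\Gogo}$. The natural averaging $\tilde\eta = \sum_{h \in \lGGogor} \sigma_h(\eta)$ does not in general converge in $L^2(S, Tr)$, but upon pulling back through $\phi$ to $L^2(S^G, tr)$ only the $|G_o/\Gogo|$-many summands with $h \in G_o/\Gogo$ survive after left-multiplication by $p_o$, yielding a well-defined class of finite $L^2(S^G, tr)$-norm. Almost-normality of $G_o$ in $G$ is thus essential to make the construction well-defined and to give the correct normalization between the two sides of the isomorphism.
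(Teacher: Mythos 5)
Your argument for part (2) is in essence the paper's: after regrouping the components $p_oxp_w$ of $\phi(x)=p_ox$ along the $G_o$-orbits of $V^+$ and renormalizing each double-coset block by $\sqrt{[G_o:\Gogo]}$, your $\phi$ becomes exactly the isometry $\beta(x)=\sum_{g\in\GGG}p_oxp_{go}\sqrt{[G_o:\Gogo]/Tr(p_o)}$ used in the paper. The step you single out as the main obstacle is indeed where the content lies, and your sketch does not yet close it: to show that a given $\Gogo$-invariant $\eta\in p_oSp_{go}$ lies in the (closed) range of $\phi$, it is not enough to observe that only $[G_o:\Gogo]$ summands of the formal average survive after multiplying by $p_o$ — that controls the norm of the would-be image but does not produce a preimage in $L^2(S^G,tr)=\overline{S^G}$. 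One must actually prove that $\Theta_g(\eta)=\sum_{s\in\lGGogor}\sigma_s(\eta)$ converges (the paper shows weak convergence in $S$ via a Cauchy-filter estimate using $\Vert\eta\Vert_\infty$ and the fact that each $w\in V^+$ is reached by at most $[G_o:\Gogo]$, resp.\ $[G_{go}:\Gogo]$, of the translates $\sigma_s(\eta)=p_{so}\sigma_s(\eta)p_{sgo}$), that the limit is $G$-invariant, and that $\beta\circ\Theta_g$ is a scalar multiple of the identity on $H_g$. Also note that the stronger identification $H_g=(p_oL^2(S,Tr)p_{go})^{\Gogo}$ you propose is not needed and would require an additional density argument; the proposition only needs $\mathrm{Im}\,\beta=\bigoplus_gH_g$.

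For part (1) you depart from the paper, and here there is a genuine gap. First, a correctable slip: by Proposition \ref{prop:fixedpoint} the \SEI\ of $\Ql$ is $M^G\vee(M^{\op})^G\subset S^G$, not $T^G\subset S^G$ (the latter is strictly larger whenever $G_o\neq 1$); this is harmless since a bimodule bifinite over the smaller algebra is bifinite over the larger one. The real problem is the assertion that only finitely many irreducible summands of Popa's decomposition occur in $H_g$. The finite-dimensionality of $p_oD_0(n,m)p_{go}$ for each fixed $(n,m)$ gives no control, because $H_g$ receives contributions from all pairs $(n,m)$ with $n+m$ at least $d(o,go)$, i.e.\ from infinitely many of these finite-dimensional pieces; and "finitely many irreducible bifinite summands, each with finite multiplicity" is precisely equivalent to the bifiniteness you are trying to establish, so as written the argument is circular. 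The paper's proof is elementary and avoids all of this: since $S^{\Gogo}$ is a factor (Proposition \ref{prop:minimal}), there are partial isometries $m_1,\dots,m_n\in S^{\Gogo}$ with $m_1m_1^*=p_o$ and $\sum_jm_j^*m_j=p_{go}$; any $x\in p_oS^Gp_{go}$ then satisfies $m_1^*x\in T(go)^{\Gogo}$ and $xm_i^*\in T(o)^{\Gogo}$, and the inclusions $T(go)^{G_{go}}\subset T(go)^{\Gogo}$ and $T(o)^{G_o}\subset T(o)^{\Gogo}$ have finite index because $[G_o:\Gogo]$ and $[G_{go}:\Gogo]$ are finite by almost normality (Remark \ref{rem:almost-normal}); the resulting finite spanning sets $m_1\cdot E$ and $F\cdot\{m_j\}$ exhibit $H_g$ as bifinite. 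You should either adopt this argument or supply an independent proof that each $H_g$ meets only finitely many irreducibles.
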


\begin{proof}
Proof of (1).
Consider $g\in G$.
Since $Tr(p_o)<\infty$, we have that $p_o S$ is contained in $L^2(S,Tr).$
Therefore, $H_g$ is well defined.
Since $p_{g o}$ and $p_o$ commute with $T^G$, we have that $H_g$ is a $T^G$-bimodule.
Suppose that $Tr(p_o)\leqslant Tr(p_{g o}).$
Since $S^\Gogo$ is a factor by Proposition \ref{prop:minimal}, there exists some elements $m_1,\cdots,m_n \in S^\Gogo$ such that $m_1m_1^*=p_o, m_im_i^*\leqslant p_o, 2\leqslant i \leqslant n$ and $\sum_{j=1}^n m_j^*m_j=p_{go}$.
Since $G_o<G$ is an \AN\ subgroup by Remark \ref{rem:almost-normal}, we have that the index $[G_o:\Gogo]$ is finite.
Therefore, $[T(o)^{\Gogo}: T(o)^{G_o}]<\infty.$
Hence, there exists a finite subset $F\subset T(o)^\Gogo$ such that $T(o)^\Gogo=\spann(T(o)^{G_o}\cdot F).$
Similarly, there exists a finite subset $E\subset T(g o) ^{\Gogo}$ such that $T(g o)^{\Gogo} = \spann(E\cdot T(g o)^{G_{ g o } }).$
Observe that 
$$p_{g o} T^G = T^G p_{g o} = T(g o)^{G_{g o } }.$$
Consider $x\in p_o S^G p_{go}.$
We have that $x= m_1(m_1^*x)$ and $m_1^* x\in p_{go} S p_{go}=T(go).$
Moreover, $m_1^*x$ is $\Gogo$-invariant.
Therefore, $m_1^*x$ is in $T(g o)^{\Gogo}= \spann(E \cdot T(g o)^{G_{go}}).$
Hence, $x\in \spann(m_1\cdot E \cdot T(g o)^{G_{go}})=\spann( m_1\cdot E\cdot T^G).$

Consider $1\leqslant i \leqslant n$.
Observe that $x=\sum_{j=1}^n x m_j^* m_j$ and $xm_i^*\in p_o S p_o=T(o).$
Moreover, $xm_i^*$ is $\Gogo$-invariant.
Hence, $xm_i^*$ is in $T(o)^{\Gogo}= \spann(T(o)^{G_o}\cdot F)= \spann(T^G\cdot F).$
Therefore, $x$ is in $\spann(T^G \cdot F \cdot \{m_j : 1\leqslant j\leqslant n\}).$
Therefore, $H_g$ is a bifinite $T^G$-bimodule.

Proof of (2).
Consider the map $\beta: L^2(S^G,tr)\loriar L^2(S,Tr)$ such that 
$$\beta(x)=\sum_{g\in\GGG} p_o x p_{g o} \sqrt{\frac{[G_o : \Gogo]}{Tr(p_o)}}$$ for any $x\in S^G$, where the sum converges for the $L^2$-norm.
Let us show that $\beta$ is an isometry.
Let $\sigma:G\loriar\Aut(S)$ be the action of $G$ on $S$.
Observe that $Tr\circ\sigma_g= c_g \cdot Tr$ for any $g\in G$, where $c_g$ is the ratio $\frac{Tr(p_{gw})}{Tr(p_w)}$ which does not depend on $w\in V^+.$
In particular, 
\begin{equation}\label{equa:Trsigmag} Tr\circ \sigma_g=Tr \text{ if } g\in G \text{ fixes a vertex of } V^+.\end{equation}
Consider $x\in S^G.$
We have that 
\begin{align*}
Tr(\beta(x)\beta(x)^*) & = \sum_{g,h\in\GGG} \frac{\sqrt{ [G_o : \Gogo ] [G_o : G_{o,ho} ]}}{Tr(p_o)} Tr( p_o x p_{ go } p_{ h o } x^* p_o )\\
 & = \sum_{g\in\GGG} \frac{[G_o : \Gogo ]}{Tr(p_o)}  Tr(p_o x p_{go} x^*).
\end{align*}
Consider $g,h\in G$ such that $G_o g G_o=G_o h G_o$.
Then, there exists $k\in G_o$ such that $g G_o=kh G_o.$
Observe that
\begin{align*}
Tr(p_o x p_{go} x^*) & = Tr(p_o x p_{kho} x^*) = Tr\circ \sigma_{k}(p_o x p_{h o} x^*)\\
 & = Tr(p_o xp_{ho}x^*) \text{ by \eqref{equa:Trsigmag}.}
\end{align*}
Therefore, $Tr(p_o x p_{go} x^*) = [G_o : \Gogo]^{-1} \sum_{k\in \langle G_o/\Gogo\rangle} Tr(p_o x p_{kgo} x^*).$
Hence,
\begin{align*}
Tr(\beta(x)\beta(x)^*)  & = \sum_{g\in\GGG}\sum_{k\in\langle G_o/\Gogo\rangle}\frac{Tr(p_oxp_{kgo}x^*)}{Tr(p_o)}\\
 & = \sum_{s\in \lGGor} \frac{Tr(p_oxp_{so}x^*)}{Tr(p_o)} \text{ for a \SR\ } \lGGor\\ 
 & = \frac{Tr(p_oxx^*)}{Tr(p_o)} = tr(xx^*).
\end{align*}
Therefore, $\beta$ defines an isometry from $L^2(S^G,tr)$ to $L^2(S,Tr).$

The map $\beta$ is $T^G$-bimodular because $p_o$ and $p_{go}$ commute with $T^G$ for any $g\in G.$
Note that $H_g$ is orthogonal to $H_k$ if $g G_o\neq k G_o$.
Therefore, the $T^G$-bimodules $\{H_l : l\in\GGG\}$ are pairwise orthogonal.
We have that $Im\beta\subset \bigoplus_{l\in\GGG} H_l$ by definition.
Let us show that the range of $\beta$ is equal to $\bigoplus_{l\in\GGG} H_l$.

We fix $g\in \GGG.$
Consider $y\in p_o S^G p_{go}=H_g\cap S.$
Let us show that the sum $\sum_{ s\in\lGGogor} \sigma_s(y)$ converges for the \WOT\ of $S$.
Consider $\varepsilon>0,\xi,\eta\in L^2(S,Tr)$ such that $\Vert \xi\Vert_2=\Vert \eta\Vert_2=1.$
If $A\subset V^+$, we denote by $A^c$ its complement and by $p_A$ the projection $\sum_{v\in A} p_v.$
Since $1=\Vert\xi\Vert_2^2=\sum_{v\in V^+} \Vert p_v \xi\Vert_2^2=\sum_{v\in V^+} \Vert p_v \eta\Vert_2^2$, there exists a finite subset $A\subset V^+$ such that $\Vert p_{A^c} \xi\Vert_2^2, \Vert p_{A^c} \eta\Vert_2^2<\varepsilon.$
Consider the set $E=\{s\in \lGGogor : so\in A \text{ or } sgo\in A\}.$
Note that its cardinal $\vert E \vert$ is smaller than $\vert A\vert ([G_o : \Gogo ] + [G_{go} : \Gogo]) <\infty.$
Denote by $E^c$ the complement of $E$ inside  $\lGGogor.$
Observe that 
\begin{align*}
\vert\langle \sum_{s\in E^c} \sigma_s(y)\xi,\eta\rangle\vert & \leqslant  \sum_{s\in E^c} \vert \langle \sigma_s(y) \xi, \eta \rangle \vert \leqslant \sum_{s\in E^c} \vert \langle \sigma_s(y)p_{sgo}\xi,p_{so}\eta\rangle\vert \text{ since } y\in p_o S p_{go}  \\
 & \leqslant \sum_{s\in E^c} \Vert \sigma_s(y) p_{sgo} \xi \Vert_2 \Vert p_{so}\eta\Vert_2 \text{ by the Cauchy-Schwarz inequality}\\
 & \leqslant \sum_{s\in E^c} \Vert y \Vert \Vert p_{sgo} \xi \Vert_2 \Vert p_{so} \eta \Vert_2\\
 & \leqslant \Vert y \Vert \sqrt{ \sum_{s\in E^c} \Vert p_{sgo} \xi \Vert_2^2 } \sqrt{ \sum_{t\in E^c} \Vert p_{to} \eta \Vert_2^2 } \text{ by the Cauchy-Schwarz inequality}\\
 & \leqslant \Vert y \Vert \sqrt{\sum_{w\in A^c} \vert\{ s\in E^c : sgo=w \}\vert \Vert p_w\xi\Vert_2^2} \sqrt{\sum_{w\in A^c} \vert\{ t\in E^c : to=w \}\vert \Vert p_w\eta\Vert_2^2} 
\end{align*}
Consider $w\in V^+, r\in G$ such that $rw=o$ and $F=\{t\in E^c : to=w\}.$
Observe that $r\cdot F\subset G_o\cap r\lGGogor.$
Therefore, $\vert F\vert=\vert r\cdot F\vert\leqslant \vert G_o\cap r\lGGogor\vert=[G_o : \Gogo],$ since $\vert G_o\cap R\vert =[G_o : \Gogo]$ for any \SR\ $R$ of $G/\Gogo.$
Similarly, $\vert\{ s\in E^c : sgo=w \}\vert\leqslant [G_{go} : \Gogo].$
Therefore, 
\begin{align*}
\vert\langle \sum_{s\in E^c} \sigma_s(y)\xi,\eta\rangle\vert & \leqslant \sqrt{ [G_o : \Gogo] [G_{go} : \Gogo]} \Vert p_{A^c}\xi\Vert_2 \Vert p_{A^c}\eta\Vert_2\\
 & \leqslant \varepsilon \sqrt{ [G_o : \Gogo] [G_{go} : \Gogo]}.
\end{align*}
This implies that for any $\delta>0$ there exists a finite set $E\subset\lGGogor$ such that for any sets $I,J$ that contains $E$ we have that 
$\vert \langle \sum_{s\in I} \sigma_s(y)\xi,\eta\rangle - \langle \sum_{t\in J} \sigma_t(y)\xi,\eta\rangle \vert <\delta.$
Hence, $( (\sum_{s\in I} \sigma_s(y) : I\subset \lGGogor)$ is a Cauchy filter in $S$ \WRT\ the weak uniform structure.
This implies that the sum $\sum_{s\in \lGGogor} \sigma_s(y)$ converges weakly in $S$, since $S$ is complete for the weak uniform structure.
We denote by $\Theta_g(y)$ this limit.
Let us show that $\Theta_g(y)$ is $G$-invariant.
Suppose that $R$ is a \SR\ of $G/\Gogo.$
For any $r\in R$ there exists a unique $a_r\in\lGGogor$ and $b_r\in\Gogo$ such that $r=a_rb_r.$
we have that $\sigma_r(y)=\sigma_{a_rb_r}(y)=\sigma_{a_r}\circ\sigma_{b_r}(y)=\sigma_{a_r}(y)$ since $y$ is $\Gogo$-invariant.
Moreover, $\{a_r: r\in R\}=\lGGogor.$
Therefore, $ \Theta_g(y)=\sum_{r\in R} \sigma_r(y).$
Hence, $\Theta_g$ does not depend on the choice of the \SR\ of $G/\Gogo.$
Consider $a\in G.$
We have that $\sigma_a\circ\Theta_g(y)=\sum_{r\in R}\sigma_r(y)$, where $R=a\cdot\lGGogor.$
Since $R$ is a \SR, we obtain that $\sigma_a\circ\Theta_g(y)=\Theta_g(y).$
Hence, $\Theta_g(y)$ is $G$-invariant.

Let $\langle G_o/\Gogo\rangle$ be a \SR\ of $G_o/\Gogo$.
Observe that 
\begin{align*}
tr(\Theta_g(y)\Theta_g(y)^*) & = Tr(p_o)^{-1} Tr(\Theta_g(y)\Theta_g(y)^*p_o) \\
 & = Tr(p_o)^{-1} \sum_{s,t\in \lGGogor} Tr(\sigma_s(y)\sigma_t(y)^* p_o) \\
& = Tr(p_o)^{-1} \sum_{s\in G_o\cap\langle G/\Gogo\rangle} Tr(\sigma_s(y)\sigma_s(y)^*) \text{ since } y\in p_oSp_{go}\\
 & = Tr(p_o)^{-1}  \sum_{s\in G_o\cap\langle G/\Gogo\rangle} Tr(yy^*) \text{ by \eqref{equa:Trsigmag} }\\
 & = \frac{[G_o : \Gogo ] }{ Tr(p_o) } \Vert y \Vert_2^2.
\end{align*}
Hence, $\Theta_g$ extends to an injective bounded linear operator from $H_g$ to $L^2(S^G,tr).$
Observe that $\Theta_g$ is a $T^G$-bimodular map.
Consider $z\in S^G$ and $y=p_o z p_{go}.$
We have that
\begin{align*}
\beta\circ\Theta_g(y) & = \sum_{t\in\GGG} \sum_{s\in\langle G/\Gogo\rangle} p_o \sigma_s(y) p_{to} \sqrt{\frac{[G_o : G_{o,to} ] }{Tr(p_o)}}\\
 & = \sum_{t\in\GGG} \sum_{s\in\langle G/\Gogo\rangle} \delta_{o,so} \delta_{sgo,to} p_o z p_{to} \sqrt{\frac{[G_o : G_{o,to} ] }{Tr(p_o)}},
\end{align*}
where $\delta_{a,b}$ is the Kronecker symbol.
Observe that if $o=so$ and $sgo=to$, then $s\in G_o$ and $G_o g G_o =G_o t G_o.$
Therefore,
\begin{align*}
\beta\circ\Theta_g(y) & = \sum_{s\in G_o\cap \langle G/\Gogo\rangle} \delta_{sgo, go} p_o z p_{go} \sqrt{\frac{[G_o : G_{o,go} ] }{Tr(p_o)}}\\
 & = y \sqrt{\frac{[G_o : G_{o,go} ] }{Tr(p_o)}}
\end{align*}
We obtain that $\beta\circ\Theta_g=\sqrt{\frac{[G_o : G_{o,go} ] }{Tr(p_o)}} Id_{H_g}$ by a density argument.
Therefore, $H_g$ is contained in $Im\beta$ for any $g\in\GGG.$
Since $Im\beta\subset \bigoplus_{l\in\GGG} H_l$ and $\beta$ is an isometry, we obtain that $Im\beta=\bigoplus_{l\in\GGG} H_l$.
Hence, $\beta$ realizes an isomorphism of $T^G$-bimodules from $L^2(S^G,tr)$ onto $\bigoplus_{l\in\GGG} H_l$.
\end{proof}

\begin{prop}\label{prop:maps}
Let $f:G\loriar\C$ be a unital bounded $G_o$-bi-invariant map.
Denote by $\bar f:G/G_o\loriar\C$ the induced map on the coset space.
\TFAT
\begin{enumerate}
\item There exists a unique normal trace-preserving unital $T$-bimodular map $\phi_f:S\loriar S$ such that
$\phi_f(x)=f(h^{-1}g)x$ for any $x\in p_{go} S p_{ho}$ and $g,h\in G.$
\item If $f$ is positive definite, then $\phi_f$ is \copo.
\item The restriction of $\phi_f$ to $S^G$ defines a trace-preserving $T^G$-bimodular map $\psi_f:S^G\loriar S^G$.
\item The map $\bar f$ has finite support if and only if $\psi_f(S^G)$ is a bifinite $T^G$-bimodule.
\end{enumerate}
\end{prop}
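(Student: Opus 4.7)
The plan is to realize $\phi_f$ as a block-diagonal (Schur-type) multiplier with respect to the decomposition of $S$ coming from the minimal central projections $\{p_v:v\in V^+\}$ of $T$ (Proposition \ref{prop:vna}). First I would use $G_o$-bi-invariance of $f$ to descend it to a well-defined kernel $\tilde f:V^+\times V^+\loriar\C$ by $\tilde f(go,ho):=f(h^{-1}g)$: replacing $(g,h)$ by $(gk,hk')$ with $k,k'\in G_o$ transforms $h^{-1}g$ into $k'^{-1}h^{-1}gk$, which $f$ sends to the same value. Since every $p_v$ lies in $Z(T)$, any normal $T$-bimodular map on $S$ respects the block decomposition $S=\bigoplus_{v,w\in V^+}p_vSp_w$, so the requirement $\phi_f(x)=\tilde f(v,w)x$ on $x\in p_vSp_w$ determines $\phi_f$ uniquely. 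Unitality of $f$ combined with $G_o$-bi-invariance forces $f|_{G_o}\equiv 1$, hence $\tilde f(v,v)=1$; this immediately yields $\phi_f(1)=\sum_v p_v=1$ and, via traciality of $Tr$, $Tr\circ\phi_f=Tr$ on positive elements (off-diagonal blocks carry no $Tr$, and diagonal blocks pass through unchanged).

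The principal technical point in (1) is the boundedness and normality of $\phi_f$, which I would obtain together with (2). For positive definite $f$, a Kolmogorov dilation provides a Hilbert space $\mathcal K$ and vectors $(\xi_v)_{v\in V^+}\subset\mathcal K$ with $\tilde f(v,w)=\langle\xi_w,\xi_v\rangle$. Using the orthogonal decomposition $L^2(S,Tr)=\bigoplus_v p_v L^2(S,Tr)$, I define an isometry $V:L^2(S,Tr)\loriar L^2(S,Tr)\otimes\mathcal K$ by $V(p_v\eta)=p_v\eta\otimes\xi_v$; the formula $\phi_f(x)=V^*(x\otimes 1)V$ then exhibits $\phi_f$ as a unital normal \copo\ contraction, which gives (2). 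The general bounded case in (1) follows by standard polarization, decomposing $f$ as a complex linear combination of positive definite $G_o$-bi-invariant maps.

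For (3), I would check that $\phi_f$ commutes with every $\sigma_g$, $g\in G$. Since $\sigma_g(p_v)=p_{gv}$ sends $p_vSp_w$ to $p_{gv}Sp_{gw}$, the required identity reduces to $\tilde f(gv,gw)=\tilde f(v,w)$; writing $v=g_1o$, $w=g_2o$ this is $f((gg_2)^{-1}(gg_1))=f(g_2^{-1}g_1)$, which is immediate. Hence $\phi_f$ restricts to $\psi_f:S^G\loriar S^G$, $T^G$-bimodularity is inherited from $T$-bimodularity via $T^G\subset T$, and trace preservation on $S^G$ follows from
$$tr(\psi_f(x))=Tr(p_o)^{-1}Tr(p_o\phi_f(x)p_o)=Tr(p_o)^{-1}Tr(\phi_f(p_oxp_o))=Tr(p_o)^{-1}Tr(p_oxp_o)=tr(x),$$
using $\tilde f(o,o)=1$, $T$-bimodularity applied to $p_o\in T$, and traciality of $Tr$.

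Part (4) falls out cleanly from Proposition \ref{prop:bimodules}, which gives $L^2(S^G,tr)\cong\bigoplus_{g\in\GGG}H_g$ with each $H_g$ a nontrivial bifinite $T^G$-bimodule. By construction $\psi_f$ acts on $H_g$ as multiplication by the scalar $\tilde f(o,go)=f(g^{-1})$, and since $G_o<G$ is \AN\ (Remark \ref{rem:almost-normal}) each double coset $G_ogG_o$ decomposes into $[G_o:\Gogo]<\infty$ right cosets, so $\bar f$ having finite support on $G/G_o$ is equivalent to $\tilde f(o,go)$ vanishing for all but finitely many $g\in\GGG$. In that case $\overline{\psi_f(S^G)}^{L^2}$ is a finite orthogonal sum of bifinite bimodules, hence bifinite via the union of the generating sets. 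Conversely, if $\psi_f(S^G)$ is bifinite and $F$ is a finite generating set, then $\overline{\spann(F\cdot T^G)}^{L^2}$ has finite right $T^G$-dimension, whereas an infinite orthogonal sum of nontrivial bifinite $H_g$ inside $\overline{\psi_f(S^G)}^{L^2}$ would have infinite right dimension, a contradiction. The main obstacle throughout is the boundedness step in (1), resolved by reducing to the positive definite case via polarization.
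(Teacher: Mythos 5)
Your overall strategy coincides with the paper's: $\phi_f$ is the block (Schur) multiplier attached to the kernel $\tilde f(go,ho)=f(h^{-1}g)$ on $V^+\times V^+$; uniqueness comes from the $p_v$ being central in $T$; complete positivity comes from a Kolmogorov/Stinespring factorization $\phi_f(x)=V^*(x\otimes 1)V$ through $L^2(S,Tr)\otimes\mathcal K$ (the paper's operator $A$ is exactly your $V$); part (3) follows from the $G$-equivariance of the kernel; and part (4) is read off from the decomposition $L^2(S^G,tr)\cong\bigoplus_{g\in\GGG}H_g$ of Proposition \ref{prop:bimodules}. Your trace computation in (3) via $p_o\phi_f(x)p_o=\phi_f(p_oxp_o)=p_oxp_o$ is a slightly cleaner version of the paper's argument with the map $E^S_T$.

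The one step that fails is your reduction of the boundedness of $\phi_f$ for a general bounded $f$ to the positive definite case ``by standard polarization.'' The linear span of the positive definite functions on $G$ is the Fourier--Stieltjes algebra $B(G)$, which is a \emph{proper} subspace of the bounded functions whenever $G$ is infinite discrete (resp.\ noncompact): for instance the indicator function of $\mathbb N\subset\mathbb Z$ is bounded but is not a finite linear combination of positive definite functions, and restricting to $G_o$-bi-invariant functions does not repair this. So a general unital bounded $G_o$-bi-invariant $f$ need not decompose as you claim; indeed for the blockwise multiplier to be bounded on all of $S$ one needs $\tilde f$ to be a Schur multiplier, which boundedness alone does not provide. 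The paper sidesteps this by defining $\phi_f$ on the weakly dense $*$-subalgebra of elements supported on finitely many blocks $p_{so}Sp_{to}$ and extending by normality (itself stated tersely). Note that in the only place the proposition is used --- the proof of Theorem \ref{theo:introone} --- the functions $f_n$ are positive definite, and there your Stinespring argument is complete and yields boundedness, normality, unitality and complete positivity in one stroke; you should therefore either restrict part (1) to positive definite (or Schur-multiplier) symbols, or replace the polarization step by a direct extension argument on the dense subalgebra as in the paper.
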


\begin{proof}
Proof of (1).
The map $\phi_f$ is well defined on the weakly dense $*$-subalgebra $B=\{x\in Gr\Pl\boxtimes Gr\Pl: \exists F\subset \lGGor \text{ finite such that } x=\sum_{s,t\in F} p_{so}xp_{to}\}.$
We have that
\begin{align*}
Tr\circ\phi_f(x)&=Tr(\sum_{g,h\in F} f(h^{-1}g)p_{go}xp_{ho})=\sum_{g\in F} Tr(f(g^{-1}g)p_{go}xp_{go})\\
&=\sum_{g\in F} Tr(p_{go}xp_{go})=Tr(x), \text{ for any } x\in B.
\end{align*}
Therefore, $\phi_f$ is trace-preserving on $B.$
The map $\phi_f$ is ultraweakly continuous and extends to a normal trace-preserving map $\phi_f:S\loriar S$.
This map is clearly unital since $f$ is unital.
Consider $t_1,t_2\in T$ and $x\in\sum_{g,h \in F}p_{go}Sp_{ho}$ where $F$ is a finite set.
Observe that $t_1p_{go} x p_{ho} t_2\in p_{go} S p_{ho}$ for any $g,h\in\lGGor.$
Therefore,
$$\phi_f(t_1xt_2)=\phi_f(\sum_{g,h\in F}t_1 p_{go} x p_{ho} t_2)=\sum_{g,h\in F}f(h^{-1}g)t_1 p_{go} x p_{ho} t_2=t_1\phi_f(x)t_2.$$
Hence, by a density argument we have that the map $\phi_f$ is $T$-bimodular.

Proof of (2).
Suppose that $f$ is \pode.
There exists a \HS\ $\mH$ and a continuous right-$G_o$-invariant bounded map $\xi:G\loriar\mH$ such that $f(h^{-1}g)=\langle \xi(h),\xi(g)\rangle$ for any $g,h\in G.$
Consider the \stre\ $\rho:S\loriar B(L^2(S,Tr)\otimes \mH)$ defined as $\rho(x)(a\otimes\zeta)=xa\otimes \zeta$ for any $x\in S$, $a\in L^2(S,Tr)$ and $\zeta\in\mH$.
Consider the bounded operator $A:L^2(S,Tr)\loriar  L^2(S,Tr)\otimes\mH,a\in p_{go} L^2(S,Tr)\longmapsto a\otimes \xi(g),$ for any $g\in\lGGor.$
One can check that $\phi_f(x)=A^*\rho(x)A$ for any $x\in S.$
Therefore, $\phi_f$ is \copo.

Proof of (3).
Consider the action $\sigma:G\loriar\Aut(S).$
Since $\phi_f$ does not depend on the choice of \SR\ $\lGGor$ we get that $\phi_f\circ\sigma_g=\sigma_g\circ\phi_f$ for any $g\in G.$
This implies that $\phi_f(S^G)$ is contained inside $S^G.$
The map $\psi_f$ is $T^G$-bimodular since it is the restriction of a $T$-bimodular map.
Consider the normal map $E^S_T:S\loriar S,x\longmapsto \sum_{g\in \lGGor}p_{go}xp_{go}$.
The unique \NFTS\ $tr$ of $S^G$ is the restriction of the map $\omega:S\loriar\C,x\longmapsto Tr(p_o)^{-1} Tr(xp_o)$.
It is easy to see that $\omega\circ E^S_T=\omega$ and $E^S_T\circ \phi_f=E^S_T.$
Therefore, $$tr\circ\psi_f(x)=\omega\circ\phi_f(x)=\omega\circ E^S_T\circ \phi_f(x)=\omega(x)=tr(x),$$ for any $x\in S^G.$

Proof of (4).
Since $G/G_o$ is an \AN\ subgroup, we have that $\bar f$ is finitely supported \IFF\ the support $supp(f)$ is contained in finitely many double cosets of $G_o\backslash G/G_o.$
Since $f$ is $G_o$-bi-invariant, we have that $supp(f)$ is stable by left and right multiplication by $G_o.$
Hence, there exists $E\subset\GGG$ such that $supp(f)=G_o\cdot E \cdot G_o.$
Observe that the norm closure of the image of $\psi_f$ in $L^2(S^G,tr)$ is isomorphic to $\bigoplus_{g\in E} H_g.$
Proposition \ref{prop:bimodules} implies the result.
\end{proof}

\begin{proof}[Proof of Theorem \ref{theo:introone} ]
Consider $(\Ga,\mu), G,G_o,\Pl$ as above such that $G$ is amenable. 
Denote by $\Ql=\Pl^G$ the \SPA\ equal to the \FPS\ under the action of $G$.
As observed in Section \ref{sec:amenability}, the subgroup $G_o<G$ is co-amenable since $G$ is amenable.
By Section \ref{sec:amenability}, there exists a sequence of positive definite $G_o$-bi-invariant maps $f_n:G\loriar\C, n\gO$ with support contained in a finite union of right $G_o$-cosets and that converges pointwise to $1$.
By Proposition \ref{prop:maps}, the collection $(\psi_n:=\psi_{f_n},n\gO)$ is a sequence of \copo\ $T^G$-bimodular maps from $S^G$ to $S^G$ such that $\psi_n(S^G)$ is a bifinite $T^G$-bimodule for any $n\gO.$
It is easy to see that $\lim_{n\rightarrow\infty}\Vert\psi_n(x)-x\Vert_2=0$ for any $x\in S^G.$
Therefore, $T^G\subset S^G$ is co-amenable.

Consider the inclusion $M^G\vee (M^\op)^G\subset T^G.$
The map $\Theta:T(o)^{G_o}\loriar S,x\longmapsto \sum_{s\in\lGGor}\sigma_s(x)$ realizes an isomorphism from $T(o)^{G_o}$ onto $T^G$ such that $\Theta(M(o)^{G_o})=M^G$ and $\Theta((M(o)^\op)^{G_o})=(M^\op)^G$.
Therefore, the inclusion $M^G\vee (M^\op)^G\subset T^G$ is isomorphic to $T(o)^{G_o\times G_o}\subset T(o)^{G_o}$ for the action $\sigma\otimes \sigma^\op:G_o\times G_o\loriar \Aut(M(o))\times \Aut(M(o)^\op)<\Aut(T(o)).$
The group $G_o$ is amenable since it is a closed subgroup of the amenable group $G$.
By the same observation made in Section \ref{sec:amenability}, we have that the subgroup $G_o\subset G_o\times G_o$ given by the diagonal inclusion is co-amenable since $G_o\times G_o$ is amenable.
Therefore, the inclusion $M^G\vee (M^\op)^G\subset T^G$ is co-amenable by \cite[Proposition 6]{Monod_Popa_co-moy}.
The composition of two co-amenable inclusions is co-amenable by \cite[Theorem 3.2.4.1]{Popa_correspondances}.
Therefore, $M^G\vee (M^\op)^G\subset S^G$ is co-amenable.
Proposition \ref{prop:fixedpoint} implies that this later inclusion is isomorphic to the \SEI\ of $\Ql.$
Therefore, the \SPA\ $\Ql$ is amenable.
\end{proof}

We end this section with an observation regarding the principal graph of $\Ql.$

\begin{prop}
Let $(\Ga,\mu), G, G_o,$ and $\Ql$ satisfying the assumptions of this section.
Moreover, assume that $G$ is contained in $\AutG$.
Then the \SPA\ $\Ql$ has finite depth if and only if the graph $\Ga$ is finite.
\end{prop}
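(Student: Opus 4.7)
The plan is to characterize finite depth of $\Ql$ by boundedness of the sequence $s_n:=\dim Z(\Ql_n^+)$ counting the simple summands of $\Ql_n^+$. For any subfactor standard invariant $s_n$ is non-decreasing in each parity (via the basic construction step), so boundedness of $s_n$ is equivalent to the Bratteli diagram of $\Ql_0^+\subset\Ql_1^+\subset\cdots$ eventually stabilising, which is exactly the finite depth condition on the principal graph $\Ga(\Ql)$.

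The next step is to express $s_n$ via an orbit decomposition. Writing the block decomposition $\Pl_n^+=\bigoplus_{(v,w)} B(\ell^2(C_n^+(v,w)))$, with $v\in V^+$ and $w$ lying in $V^+$ or $V^-$ according to the parity of $n$, one sees that $G<\Aut(\Ga,\mu)$ permutes the blocks by $(v,w)\mapsto(gv,gw)$ and acts inside each block by permutation of paths. Taking fixed points therefore yields
$$
\Ql_n^+ \;=\; \bigoplus_{O} B(\ell^2(C_n^+(o,w_O)))^{G_{o,w_O}},
$$
where $O$ runs over $G$-orbits of pairs with $C_n^+\neq\emptyset$ and $(o,w_O)$ is a chosen representative, using transitivity of $G$ on $V^+$. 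These orbits correspond bijectively to $G_o$-orbits on the set of $w$ of appropriate parity with $d(o,w)\leqslant n$ and $d(o,w)\equiv n\pmod 2$, and each block summand has a number of simple components equal to the number of distinct irreducible $G_{o,w_O}$-representations occurring in the permutation module $\ell^2(C_n^+(o,w_O))$.

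For the backward direction, suppose $\Ga$ is finite. Then $\Aut(\Ga,\mu)$ is a subgroup of the finite group of permutations of $V$, hence $G$ is finite; the number of $G_o$-orbits on $V$ is at most $|V|$, and each stabiliser has at most $|G|$ irreducible representations, so $s_n\leqslant |V^+|\cdot|V|\cdot|G|$ uniformly in $n$, giving finite depth. For the forward direction, assume $\Ga$ is infinite. Any element of $G_o\subset\Aut(\Ga,\mu)$ preserves the graph distance from $o$, so each $G_o$-orbit lies in a sphere $S_k(o)=\{w:d(o,w)=k\}$, which is finite by local finiteness of $\Ga$. Since $V=\bigsqcup_k S_k(o)$ is infinite with each sphere finite, infinitely many spheres are non-empty, producing infinitely many $G_o$-orbits on $V$. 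Because $G$ preserves the bipartition, one of $V^+, V^-$ must carry infinitely many $G_o$-orbits; along $n\to\infty$ of the matching parity the number of orbits inside the ball of radius $n$ tends to infinity, and each contributes at least one simple summand to $\Ql_n^+$. Hence $s_n\to\infty$ and $\Ql$ has infinite depth.

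The main obstacle will be justifying the characterisation of finite depth via boundedness of $\{s_n\}$ together with the precise orbit decomposition of $(\Pl_n^+)^G$; once these are in place, the backward implication is an elementary finiteness count and the forward implication reduces to the sphere-counting argument above.
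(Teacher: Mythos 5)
Your proof is correct, but it follows a genuinely different route from the paper. The paper characterises finite depth of $\Ql$ by finiteness of the index of its symmetric enveloping inclusion $M^G\vee (M^\op)^G\subset S^G$, and then computes: the factor $[S^G:T^G]$ is controlled by the number of double cosets $G_o\backslash G/G_o$ (via the bimodule decomposition of Proposition \ref{prop:bimodules}), which is infinite when $\Ga$ is infinite because $g\mapsto d(o,go)$ is a $G_o$-bi-invariant map with infinite image, and finite when $\Ga$ is finite; the remaining factor $[T^G:M^G\vee(M^\op)^G]$ equals $[T(o)^{G_o}:T(o)^{G_o\times G_o}]=\vert G_o\vert$, which is finite when $\Ga$ is finite. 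You instead never touch the \SEI: you work directly in the planar algebra, identifying the blocks of $\Ql_n^+=(\Pl_n^+)^G$ with $G$-orbits of pairs $(v,w)$, i.e.\ with $G_o$-orbits of endpoints $w$ at distance $\leqslant n$ of the right parity, and you detect finite depth through boundedness of $\dim Z(\Ql_n^+)$. Both implications then follow from the same combinatorial facts the paper uses (spheres around $o$ are finite, $G_o$ preserves distance to $o$, $\Aut(\Ga)$ and hence $G$ and $G_o$ are finite when $\Ga$ is), just read off at the level of the relative commutant tower rather than of the \SEI. Your argument is more elementary and self-contained, since it avoids Popa's theorem relating finite depth to finiteness of the index of the \SEI\ as well as Propositions \ref{prop:fixedpoint} and \ref{prop:bimodules}, and it yields slightly finer information (the growth of the principal graph is governed by the count of $G_o$-orbits on spheres); the paper's proof is shorter given the machinery it has already built. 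The only points you should spell out in a full write-up are the two standard facts you flag yourself: that $\dim Z(\Ql_n^+)$ is non-decreasing in each parity with boundedness equivalent to finite depth, and that the fixed points of a block-permutation action decompose as $\bigoplus_O B(\ell^2(C_n^+(o,w_O)))^{G_{o,w_O}}$, each summand being a nonzero finite-dimensional algebra whose number of simple components is bounded by the number of irreducible representations of (the finite image of) the stabiliser. Both are routine, so the proof stands.
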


\begin{proof}
Recall that a \SPA\ $\Ql$ has finite depth if and only if its \SEI\ is a finite index \SF.
The \SEI\ of $\Ql$ is isomorphic to $M^G\vee (M^\op)^G\subset S^G$ by Proposition \ref{prop:fixedpoint}.
By Proposition \ref{prop:bimodules}, the space $L^2(S^G,tr)$ is isomorphic to a sum of bifinite $T^G$-bimodules indexed by $G_o\backslash G/G_o.$

Suppose that $\Ga$ is infinite.
Observe that $f:G\loriar\mathbb N, g\longmapsto d(o,go)$ is $G_o$-bi-invariant map.
Moreover, $Im f=\{m\in\mathbb N : \exists w\in V^+, d(o,w)=m\}$ since $G$ acts transitively on $V^+.$
Therefore, $\vert G_o\backslash G/G_o\vert \geqslant \vert Im f\vert.$
Since $\Ga$ has infinitely many vertices and is locally finite, there exists vertices in $V^+$ that are arbitrary far from $o\in V^+.$
Hence, $Im f$ is infinite and $\vert G_o\backslash G/G_o\vert =\infty.$
This implies that $[S^G : M^G\vee (M^\op)^G]=[S^G : T^G] [T^G : M^G\vee (M^\op)^G]=\infty.$

Suppose that $\Ga$ is finite.
Then $\vert G_o\backslash G/G_o\vert\leqslant \vert G/G_o\vert=\vert V^+\vert<\infty.$
Hence, $[S^G : T^G]<\infty.$
The group $G_o$ is necessarily finite since $\Ga$ has a finite number of vertices and edges and $G_o$ is contained in $\Aut(\Ga)_o$.
As observed in the last proof, the inclusion $M^G\vee (M^\op)^G\subset T^G$ is isomorphic to $T(o)^{G_o\times G_o}\subset T(o)^{G_o}$ and $T(o)$ is a factor.
Hence, $[S^G:M^G\vee (M^\op)^G]=[S^G:T^G] [T^G:M^G\vee (M^\op)^G]=[S^G:T^G] [T(o)^{G_o}:T(o)^{G_o\times G_o}]=[S^G:T^G] \vert G_o\vert<\infty.$
This concludes the proof.
\end{proof}

\section{Description of some \SEI s via Hecke pairs}\label{sec:Heckepair}

\subsection{Crossed products and Hecke pairs}
\subsubsection{The ordinary action case}
Consider an inclusion of groups $H<G$ and assume that $G/H$ is countable.
We say that $(G,H)$ is a \Hepa\ if the subgroup $H<G$ is \AN, i.e. for any $g\in G$ the group $H\cap gHg^{-1}$ has finite index inside $H$ and $gHg^{-1}.$
We refer the reader to \cite{Delaroche_AP_Coset_spaces} for more details on \Hepa s and operator algebras.
To $(G,H)$ can be associated a \VNA\ $L(G,H)$ analogous to the group \VNA.
In this section, we define the cocycle action of a \Hepa\ on a tracial \VNA\ and its corresponding twisted \CrP.
This has been considered in the framework of ordinary action on $C^*$-algebras by Palma, see \cite{Palma_HP_I,Palma_HP_II}.
Our approach is a natural generalization of the construction given in \cite[Section 1]{Delaroche_AP_Coset_spaces}.
We first define this notion in the ordinary action case which has much simpler formulas.

Let $(A,\tau)$ be a finite \VNA\ with a \nofa\ tracial state.
Let $\gamma:G\loriar\Aut(A,\tau)$ be a trace-preserving action of $G$ on $A$.
Let $\CAGH$ be the space of functions $f:G\loriar A$ such that 
$f(hgk)=\gamma_h(f(g))$ for any $g\in G$, $h,k\in H$, and such that the induced functions $\overline f :G/H\loriar A$ is finitely supported.
Note that if $f\in\CAGH$ and $g\in G$, then $f(g)$ is fixed by $\sigma(H\cap gHg^{-1})$.
We define a multiplication and an involution $*$ on $\CAGH$ as follows.
$$f_1f_2(g)=\sum_{s\in\lGHr}f_1(s)\gamma_s(f_2(s^{-1}g)), \text{ for any } f_1,f_2\in\CAGH \text{ and } g\in G,$$
where $\lGHr$ is a \SR\ of $G/H$,
$$f^*(g)=\gamma_g(f(g^{-1})^*), \text { for any } f\in\CAGH\ \text{ and } g\in G.$$
The space $\CAGH$ endowed with those operations is a unital $*$-algebra.
We have an inclusion of $A^H$ inside $\CAGH$ given by the map $j:a\in A^H\longmapsto f_a\in\CAGH$ such that $f_a(g)=a$ if $g\in H$ and zero otherwise. 
We identify $A^H$ and $j(A^H).$
Consider the linear functional $\omega:\CAGH\loriar\C,f\longmapsto \tau(f(1))$.
Let $\pi=\gamma\otimes Ad(\lambda):G\loriar \Aut(A\ootimes B(\ell^2(G/H)))$ be the group action defined as
$\pi_g(a\otimes e_{h, k})=\gamma_g(a)\otimes e_{gh, gk},$
for any $g\in G$, $h,  k\in G/H$ and $a\in A$.
Denote by $Tr:B(\ell^2(G/H))_+\loriar \overline \R_+$ the usual \nofa\ semi-finite tracial weight on $B(\ell^2(G/H))$ that sends any minimal projection to $1$.

\begin{prop}\label{prop:genuine}
The linear functional $\omega$ is faithful and the $*$-algebra $\CAGH$ acts by bounded operators on $L^2(\CAGH,\omega)$ by left multiplication.
Let $\vNAGH$ be the \VNA\ generated by this left action that we call the \CrP\ of $A$ by $(G,H).$
We have an isomorphism $\varphi$ of \VNA s from $\vNAGH$ onto the \FPS\ $(A\ootimes B(\ell^2(G/H)))^G$ such that $\varphi(f)=\sum_{s,t\in\lGHr} \gamma_s ( f(t) )\otimes e_{ s , s t }$ for any $f\in\CAGH.$
In particular, 
$$\varphi(A^H)=\{\sum_{s\in \lGHr}\gamma_s(a)\otimes e_{s,s}:a\in A^H\}.$$
\end{prop}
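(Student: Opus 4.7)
The plan is to use the formula for $\varphi$ as the organizing device: I will define $\varphi$ first on $\CAGH$ as an injective $*$-algebra morphism into $N := (A \ootimes B(\ell^2(G/H)))^G$, and then transport the Hilbert-space and \VNA\ structure on $\CAGH$ back from $N$. The useful reformulation is that $\varphi(f) \in A \ootimes B(\ell^2(G/H))$ has $(aH, bH)$-matrix-entry $\gamma_a(f(a^{-1}b))$, where $a$ denotes the representative of $aH$ in $\lGHr$; the bi-equivariance $f(hgk)=\gamma_h(f(g))$ makes this independent of the choice of representatives, and $\pi_g$-invariance is then transparent.

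At the algebraic level I would verify: (i) $\varphi$ is a unital $*$-homomorphism, by comparing the convolution formula for $f_1 f_2$ with matrix multiplication of entries (after substitution $s = a^{-1}b$) and the involution formula with entrywise conjugate transposition; (ii) $\varphi$ is injective, since vanishing of the first-row entries $(H, tH)\mapsto f(t)$ for $t \in \lGHr$ combined with right $H$-invariance forces $f \equiv 0$; (iii) $\omega$ is faithful, via the direct computation $\omega(f^*f) = \sum_{s \in \lGHr} \tau(f(s^{-1})^* f(s^{-1}))$ and the observation that $\{s^{-1} : s \in \lGHr\}$ is a right transversal of $H \backslash G$ on which left-equivariance $f(hg) = \gamma_h(f(g))$ propagates the vanishing to all of $G$; (iv) for $p := 1 \otimes e_{H, H}$ one has $p \varphi(f) p = f(1) \otimes e_{H, H}$, so $\omega$ corresponds under $\varphi$ to the compression $(\tau \otimes Tr)(p\,\cdot\,p)$ of the $G$-invariant \nofa\ semi-finite tracial weight on $A \ootimes B(\ell^2(G/H))$.

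For the \VNA\ statement, boundedness of $\varphi(f)$ on $L^2(A) \otimes \ell^2(G/H)$ follows by decomposing $\varphi(f) = \sum_{t \in \lGHr \cap \text{supp}\,\overline f} y_t$, where $y_t := \sum_{s \in \lGHr} \gamma_s(f(t)) \otimes e_{sH, stH}$ acts as a shift on $\ell^2(G/H)$ twisted by the multipliers $\gamma_s(f(t))$, hence has norm at most $\Vert f(t)\Vert$, and the outer sum is finite. Transporting through $\varphi$, left multiplication by $\CAGH$ is bounded on $L^2(\CAGH, \omega)$, so $\vNAGH$ is well defined and $\varphi$ admits a normal extension $\vNAGH \to N$. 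The formula for $\varphi(A^H)$ is immediate: if $f_a \in \CAGH$ is supported on $H$ with $f_a(h) = \gamma_h(a) = a$ on $H$, then only the terms with $tH = H$ contribute and the entry formula yields $\varphi(f_a) = \sum_s \gamma_s(a) \otimes e_{sH, sH}$. The main obstacle is surjectivity of the extended $\varphi$: any $y \in N$ is $G$-equivariantly determined by its first row $bH \mapsto y_{H, bH}$ satisfying precisely the $H$-equivariance of a bi-equivariant function, so I would approximate such first-row data in the $L^2$-norm associated to $p$ by finite $H$-invariant truncations, producing elements of $\varphi(\CAGH)$ converging to $y$ strongly, and then invoke Kaplansky density to conclude.
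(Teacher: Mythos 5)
Your overall route is the same as the paper's: define $\varphi$ by the matrix--entry formula $(aH,bH)\mapsto \gamma_a(f(a^{-1}b))$, check it is an injective $*$-morphism into $(A\ootimes B(\ell^2(G/H)))^G$, prove faithfulness of $\omega$ by the direct computation $\omega(f^*f)=\sum_{s\in\lGHr}\tau(f(s^{-1})^*f(s^{-1}))$, and get surjectivity from the fact that a $G$-invariant element is determined by its first row. Your step (iv), identifying $\omega$ with the compression of $\tau\otimes Tr$ by $p=1\otimes e_{H,H}$ (equivalently, that $f\mapsto\varphi(f)p$ is an isometry of $L^2(\CAGH,\omega)$ onto a column subspace intertwining the left actions), is a welcome addition: it is exactly what justifies the normal extension of $\varphi$ to $\vNAGH$, a point the paper dismisses as clear. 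Likewise, your recognition that the first row of a general element of the fixed-point algebra need not be finitely supported, so that surjectivity of the extended $\varphi$ requires an approximation/density argument, addresses a step the paper's proof treats too loosely (it simply writes $x=\varphi(f)$ with $f(t)=x_{1,t}$, which need not lie in $\CAGH$). Do make sure your truncations are taken over finitely many \emph{double} cosets, so that the truncated first row still defines an element of $\CAGH$, and be aware that strong convergence of the truncations on a dense subspace does not by itself put $y$ in the weak closure without a uniform bound; it is cleaner to argue that $\varphi(\CAGH)'\,=\,N'$ or to verify weak convergence directly.

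There is one genuine error: the claim that $y_t=\sum_{s\in\lGHr}\gamma_s(f(t))\otimes e_{sH,stH}$ ``acts as a shift twisted by multipliers, hence has norm at most $\Vert f(t)\Vert$.'' The map $sH\mapsto stH$ is \emph{not} injective on $\lGHr$ in general: $stH=s'tH$ holds whenever $s^{-1}s'\in tHt^{-1}$, which can occur for $sH\neq s'H$ (e.g. $G=S_3$, $H=\{e,(12)\}$, $t=(13)$, $s=e$, $s'=(23)$). So a single column of $y_t$ may carry several nonzero entries and $y_t$ is not a multiplier times a partial isometry; an operator with $k$ entries of norm $1$ in one column has norm $\sqrt{k}$. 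The correct statement, via the Schur test, is $\Vert y_t\Vert\leqslant \sqrt{k_t}\,\Vert f(t)\Vert$ with $k_t=[H:H\cap t^{-1}Ht]$, which is finite precisely because $(G,H)$ is a Hecke pair; this is the same finiteness the paper exploits when it decomposes $HrH=\bigsqcup_{i=1}^{k}h_irH$ and bounds the left multiplication by $k\Vert a\Vert_A$. Your conclusion (boundedness of $\varphi(f)$, hence of left multiplication on $L^2(\CAGH,\omega)$) survives after this correction, but as written the estimate is false and hides the only place where almost normality enters the boundedness argument.
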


\begin{proof}
If $f\in\CAGH$, then $\omega(ff^*)=\tau(ff^*(1))=\sum_{s\in\lGHr}\tau(f(s)f(s)^*).$
Hence, if $\omega(ff^*)=0$, then $f(s)=0$ for any $s\in\lGHr$ since $\tau$ is faithful.
This implies that $\omega$ is faithful.

Consider the \HS\ $\ell^2(G/H,A)$ of right-$H$-invariant functions $f:G\loriar L^2(A)$ such that $\sum_{s\in\lGHr} \tau ( f(s)f(s)^* )<\infty$ with the norm $\Vert f\Vert_2=\sqrt{\sum_{s\in\lGHr} \tau( f(s)f(s)^* )}$.
The \HS\ $L^2(\CAGH,\omega)$ is isometric to a subspace of $\ell^2(G/H,A)$.
We identify $L^2(\CAGH,\omega)$ and this isometric subspace.
Consider $r\in\lGHr$ and $\{h_1,\cdots,h_k\}\subset H$ such that $HrH$ is equal to the disjoint union of the right cosets $\bigcup_{i=1}^kh_irH.$
Consider $a\in A$ and the function $f\in\CAGH$ such that its support is contained in $HrH$ and $f(r)=a.$
Consider an element $\xi\in\CAGH\subset L^2(\CAGH,\omega).$
We have that
$$(f\xi)(g)=\sum_{s\in\lGHr} f(s) \ga_s(\xi(s^{-1}g))=\sum_{i=1}^k\ga_{h_i}(a)\ga_{h_ir}(\xi((h_ir)^{-1} g)),\  g\in G.$$
We fix $1\leqslant i\leqslant k.$
Consider the map $\eta_i:g\in G\longmapsto \ga_{h_i}(a)\ga_{h_ir}(\xi((h_ir)^{-1} g)).$
Note that $\eta_i\in \ell^2(G/H,A)$ and $f\xi=\sum_{j=1}^k\eta_j.$
We have that 
\begin{align*}
\Vert\eta_i\Vert_2^2&=\sum_{s\in\lGHr}\tau(\ga_{h_i}(a)\ga_{h_ir}(\xi((h_ir)^{-1} s)\ga_{h_ir}(\xi((h_ir)^{-1} s)^*\ga_{h_i}(a)^*)\\
&=\sum_{s\in\lGHr}\tau(a^*a \ga_r(\xi((h_ir)^{-1} s)\xi((h_ir)^{-1} s)^*))\\
&\leqslant \Vert a\Vert_A^2\sum_{s\in\lGHr}\tau(\xi((h_ir)^{-1} s)\xi((h_ir)^{-1} s)^*)=\Vert a\Vert_A^2\Vert \xi\Vert_2^2,
\end{align*}
where $\Vert\cdot\Vert_A$ is the C$^*$-norm of $A.$
Hence, $\Vert f\xi\Vert_2\leqslant k\Vert a\Vert_A \Vert \xi\Vert_2.$
Since the subspace $\CAGH\subset L^2(\CAGH,\omega)$ is dense, we obtain that the left multiplication by $f$ on $L^2(\CAGH,\omega)$ is bounded.
Therefore, the left multiplication of $\CAGH$ defines a $*$-representation of $\CAGH$ on the GNS \HS\ $L^2(\CAGH,\omega)$.
It is clear the $\varphi$ defines a normal $*$-morphism from $\vNAGH$ to the \FPS\ $(A\ootimes B(\ell^2(G/H)))^G$.
If $\varphi(f)=0$, then $\gamma_s(f(t))=0$ for any $s,t\in\lGHr.$
Hence, $f=0$ and $\varphi$ is injective.
Consider $x=\sum_{s,t\in G/H} x_{s,t}\otimes e_{s,t}\in (A\ootimes B(\ell^2(G/H)))^G$.
Since $x$ is $G$-invariant, we have that $x_{gs,gt}=\ga_g(x_{s,t})$ for any $g\in G,s,t\in G/H.$
Therefore, $x=\varphi(f)$ where $f(t)=x_{1,t}, t\in \lGHr.$
Hence, $\varphi$ is surjective. We obtain that $\varphi$ realizes an isomorphism of \VNA s.
The last assertion of the proposition follows from the definition of $\varphi.$
\end{proof}

\subsubsection{The twisted case}
Let $G,H,A,\tau$ be as above.
A cocycle action of the \Hepa\ $(G,H)$ on the tracial \VNA\ $(A,\tau)$ is a couple of maps $(\gamma,u)$ where 
$$\ga:G\times G/H\loriar \Aut(A,\tau) \text{ and } u:G\times G/H\times G/H\loriar U(A)$$
satisfying the following axioms:
\begin{enumerate}
\item $\ga_{1,s}=\text{Id}$,
\item $\ga_{gh,s}=\ga_{g,hs}\circ\ga_{h,s}$,
\item $\ga_{g,s}=\text{Ad}(u_{g,s,t})\circ \ga_{g,t},$
\item $u_{1,s,t}=u_{g,s,s}=1,$
\item $u_{g,s,t} u_{g,t,r}=u_{g,s,r}$, and
\item $u_{gh,s,t}=\ga_{g,hs}(u_{h,s,t})u_{g,hs,ht}$ for any $g,h \in G$ and $s,t,r\in G/H.$
\end{enumerate}
We continue to denote by $\CAGH$ the space of functions $f:G\loriar A$ such that 
$f(hgk)=\gamma_{h,1}(f(g))u_{h,1,g}$ for any $g\in G$, $h,k\in H$, and such that the induced functions $\overline f :G/H\loriar A$ is finitely supported.
We define a multiplication and an involution $*$ on $\CAGH$ as follows.
$$f_1f_2(g)=\sum_{s\in\lGHr}f_1(s)\ga_{s,1}(f_2(s^{-1}g))u_{s,1,s^{-1}g}, \text{ for any } f_1,f_2\in\CAGH \text{ and } g\in G,$$
where $\lGHr$ is a \SR\ of $G/H$,
$$f^*(g) = \ga_{g,g^{-1}} ( f ( g^{-1} )^* ) u_{ g , g^{-1} , 1 }, \text { for any } f\in\CAGH\ \text{ and } g\in G.$$
A careful check shows that the space $\CAGH$ endowed with those operations is a unital associative $*$-algebra.
Observe that the map $h\in H\longmapsto \gamma_{h,1}\in\Aut(A,\tau)$ is a group morphism. 
Hence, we have an ordinary action of $H$ on $A$.
Denote by $A^H$ the algebra of $a\in A$ such that $\gamma_{h,1}(a)=a$ for any $h\in H.$
We have an inclusion of $A^H$ inside $\CAGH$ given by the map $j:a\in A^H\longmapsto f_a\in\CAGH$ such that $f_a(g)=a$ if $g\in H$ and zero otherwise. 
Consider the linear functional $\omega:\CAGH\loriar\C,f\longmapsto \tau(f(1))$.

\begin{prop}\label{prop:twisted}
The linear functional $\omega$ is faithful and the $*$-algebra $\CAGH$ acts by bounded operators on $L^2(\CAGH,\omega)$ by left multiplication.
Let $\vNAGH$ be the \VNA\ generated by this left action that we call the twisted \CrP\ of $A$ by $(G,H).$
There exists a unique group morphism $\pi:G\loriar\Aut(A\ootimes B(\ell^2(G/H)))$ satisfying that $\pi_g(a\otimes e_{s,t})=\gamma_{g,s}(a)u_{g,s,t}\otimes e_{gs,gt}$ for any $a\in A, g\in G, s,t\in G/H.$
We have an isomorphism $\varphi$ of \VNA s from $\vNAGH$ onto the \FPS\ $(A\ootimes B(\ell^2(G/H)))^G$ such that $\varphi(f)=\sum_{s,t\in\lGHr} \ga_{s,1} ( f(t) )u_{s,1,t}\otimes e_{ s , s t }$ for any $f\in\CAGH.$
In particular, 
$$\varphi(A^H)=\{\sum_{s\in \lGHr}\ga_{s,1}(a)\otimes e_{s,s}:a\in A^H\}.$$
\end{prop}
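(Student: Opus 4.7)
The plan is to follow the template of Proposition \ref{prop:genuine}, treating the cocycle corrections as bookkeeping controlled by axioms (1)--(6). First, to prove faithfulness of $\omega$, I would compute $(ff^*)(1)$ directly: substituting $f^*(s^{-1}) = \ga_{s^{-1},s}(f(s)^*) u_{s^{-1},s,1}$ into
$(ff^*)(1) = \sum_{s \in \lGHr} f(s)\, \ga_{s,1}(f^*(s^{-1}))\, u_{s,1,s^{-1}},$
axiom (2) collapses $\ga_{s,1} \circ \ga_{s^{-1},s}$ to the identity, while axiom (6) with $(g,h) = (s, s^{-1})$ and $u_{1,\cdot,\cdot}=1$ forces $\ga_{s,1}(u_{s^{-1},s,1})\, u_{s,1,s^{-1}} = 1$. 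Hence $(ff^*)(1) = \sum_s f(s)f(s)^*$, so $\omega(ff^*) = \sum_s \tau(f(s)f(s)^*)$ is zero only when $f\equiv 0$. For boundedness of left multiplication, the argument of Proposition \ref{prop:genuine} goes through verbatim: decompose $f$ according to its double-coset support, and for an $f$ supported on a single $HrH$ expand $f\xi$ as a finite sum of functions $\eta_i$, each of which is bounded by $\Vert f(r)\Vert_A \Vert\xi\Vert_2$; the unitary factors $u$ appearing in the expansion do not affect the norm estimate because $\tau$ is tracial.

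The group morphism $\pi$ is verified by a direct calculation:
$\pi_g \pi_h(a \otimes e_{s,t}) = \ga_{g,hs}\ga_{h,s}(a)\, \ga_{g,hs}(u_{h,s,t})\, u_{g,hs,ht} \otimes e_{ghs, ght},$
which equals $\ga_{gh,s}(a)\, u_{gh,s,t}\otimes e_{ghs, ght} = \pi_{gh}(a\otimes e_{s,t})$ by axioms (2) and (6); axioms (1) and (4) give $\pi_1 = \mathrm{Id}$. Uniqueness follows since the elements $a \otimes e_{s,t}$ span a weakly dense $*$-subalgebra.

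The heart of the statement is the identification $\vNAGH \simeq (A \ootimes B(\ell^2(G/H)))^G$ via $\varphi$. I would first check that the formula $\varphi(f) = \sum_{s,t \in \lGHr} \ga_{s,1}(f(t))\, u_{s,1,t} \otimes e_{s,st}$ does not depend on the choice of representative (using axioms (3) and (6) and the covariance of $f$), and then verify that $\varphi$ is a $*$-homomorphism and that its image lies in the $G$-fixed point subalgebra; the latter reduces, after a careful application of axioms (5) and (6), to the identity $\pi_g(\varphi(f)) = \varphi(f)$. Injectivity is immediate: if $\varphi(f)=0$, then each $\ga_{s,1}(f(t)) u_{s,1,t}$ vanishes, hence $f \equiv 0$ because $u_{s,1,t}$ is unitary and $\ga_{s,1}$ is an automorphism. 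For surjectivity, write $x = \sum_{s',t'} x_{s',t'} \otimes e_{s',t'} \in (A \ootimes B(\ell^2(G/H)))^G$; the $\pi_g$-invariance forces $x_{gs',gt'} = \ga_{g,s'}(x_{s',t'})\, u_{g,s',t'}$, so $x$ is entirely determined by the first-row entries $f(t) := x_{1,t}$ for $t \in \lGHr$. Covariance of $f$ under left and right multiplication by $H$ then follows from this invariance formula together with axioms (4)--(6), yielding $f \in \CAGH$ with $\varphi(f) = x$.

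The main obstacle is not any single deep step but rather the careful cocycle bookkeeping needed to confirm that $\varphi$ is a $*$-homomorphism and that its range really coincides with the $G$-fixed point algebra; in particular the matching of the twisted multiplication in $\CAGH$ with the matrix product in $A \ootimes B(\ell^2(G/H))$ requires axiom (5) (the cocycle condition in the right two variables) to make the partial products of $u_{s,1,\cdot}$ telescope correctly. Once $\varphi$ is shown to be an injective normal $*$-homomorphism onto a weakly dense subalgebra of the fixed point algebra, the equality of von Neumann algebras follows, and the final identification of $\varphi(A^H)$ is read off from the definition of the embedding $j: A^H \hookrightarrow \CAGH$.
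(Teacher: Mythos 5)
Your proposal is correct and takes exactly the route the paper intends: the paper's proof of this proposition is the single line ``similar to Proposition \ref{prop:genuine},'' and you carry out precisely that adaptation, supplying the cocycle identities (in particular the use of axioms (2), (4), and (6) to collapse $\ga_{s,1}\circ\ga_{s^{-1},s}$ and $\ga_{s,1}(u_{s^{-1},s,1})u_{s,1,s^{-1}}$ in the faithfulness computation, and of (3) and (5) to match the twisted convolution with the matrix product). The computations you indicate all check out against the axioms as stated.
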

\begin{proof}
The proof is similar to the proof of Proposition \ref{prop:genuine}.
\end{proof}

\begin{remark}\label{rem:CP}
If $A=\C$, then the \VNA\ $\vNAGH$ is the \VNA\ $L(G,H)$ (possibly twisted by a cocycle) associated to the \Hepa\ $(G,H)$ \cite{Delaroche_AP_Coset_spaces}.
If $H<G$ is a normal subgroup, then the \VNA\ $\vNAGH$ is isomorphic to a (twisted) \CrP\ $A^H\rtimes (G/H)$ of the \FPS\ $A^H$ by the quotient group $G/H.$
If the standard \SMU\ of $B(\ell^2(G/H))$ is $G$-invariant, i.e. $\pi_g(1\otimes e_{s,t})=1\otimes e_{gs,gt}$ for any $g\in G, s,t\in G/H$, then $ (A\ootimes B(\ell^2(G/H)))^G$ is isomorphic to $\vNAGH$ for an ordinary action of $(G,H)$ on $A.$
\end{remark}

\subsection{Description of \SEI s}
Let $(\Ga,\mu)$ be a weighted graph with vertex set $V=V^+\cup V^+$ and let $\Pl$ be its associated \BGPA.
Moreover, assume that $\mu$ is constant on the set of edges with source in $V^+.$
Let us fix a vertex $o\in V^+$ and denote by $\TS$ the \SEI\ of $\Pl.$
The set $\{e_v:v\in V^+\}$ is the set of minimal projections of $\ell^\infty(V^+)$ and $\{e_{v,w}:v,w\in V^+\}$ is the standard \SMU\ of $B(\ell^2(V^+)).$
We continue to denote by $T(v)$ the corner $Tp_v$ for any $v\in V^+.$

\begin{theo}\label{theo:SEIP}
The \SEI\ of $\Pl$ is isomorphic to $$T(o)\ootimes \ell^\infty(V^+)\subset T(o)\ootimes B(\ell^2(V^+)),$$
where the inclusion is given by $a\otimes e_v\mapsto a\otimes e_{v,v}$ for any $a\in T(o),v\in V^+.$
\end{theo}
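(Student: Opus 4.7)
The plan is to leverage the assumption that $\mu$ is constant on $C_1^+$ to show that all projections $p_v$, $v\in V^+$, have a common finite trace in the factor $S = S_0$, hence are pairwise Murray--von Neumann equivalent, so that $S$ is realized as the amplification $T(o)\ootimes B(\ell^2(V^+))$ with $T = T_0$ sitting inside as the block-diagonal subalgebra $T(o)\ootimes \ell^\infty(V^+)$.

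First I would verify that $\mu_V$ is constant on $V^+$. If $\mu(a) = c$ for every $a\in C_1^+$, then $\mu(\bar a) = 1/\mu(a)$ forces $\mu$ to equal $1/c$ on edges with source in $V^-$. Along any length-two path $v\to u\to w$ between two even vertices the two weights multiply to $1$, so $\mu_V(v) = \mu_V(w)$. Since $\Ga$ is connected and bipartite, $\mu_V$ takes a common value on $V^+$, and by Proposition \ref{prop:vna}.4 this gives that $Tr(p_v) = \mu_V(v)^2$ is the same finite positive number for every $v\in V^+$.

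Next, since $S$ is a type II factor by Proposition \ref{prop:vna}.4 and the $p_v$'s form a partition of unity by orthogonal projections of equal finite trace, they are pairwise Murray--von Neumann equivalent. I fix partial isometries $w_v \in S$ with $w_v w_v^* = p_v$, $w_v^* w_v = p_o$, and $w_o = p_o$, and define a unital $*$-homomorphism
$$\phi\colon T(o)\otimes B(\ell^2(V^+))\loriar S,\quad \phi(a\otimes e_{v,w}) = w_v a w_w^*$$
on the algebraic tensor product. Multiplicativity uses the identity $w_w^* w_{v'} = \delta_{w,v'} p_o$. Proposition \ref{prop:vna}.2 gives $w_v^* S w_w \subset p_o S p_o = T(o)$, hence $p_v S p_w = w_v T(o) w_w^*$ for all $v, w \in V^+$, and the extension of $\phi$ to a normal isomorphism $T(o)\ootimes B(\ell^2(V^+)) \simeq S$ is then the standard amplification argument, realized spatially via the unitary $H_0 \simeq p_o H_0 \ootimes \ell^2(V^+)$ implemented by the $w_v$'s.

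Finally I identify $T$ under $\phi$. By Proposition \ref{prop:vna}.3 the $p_v$'s are the minimal central projections of $T$, so $T = \bigoplus_{v\in V^+} T(v)$ with $T(v) = p_v S p_v = w_v T(o) w_v^*$. Therefore $\phi$ sends the block-diagonal element $\sum_v a_v \otimes e_{v,v}$ to $\sum_v w_v a_v w_v^* \in T$, and conversely every element of $T$ arises in this form, so $\phi$ restricts to the desired isomorphism $T(o)\ootimes \ell^\infty(V^+) \simeq T$ of inclusions. The only nonroutine point is the normality and surjectivity of the extension of $\phi$ when $V^+$ is infinite and $S$ is of type II$_\infty$, but this is handled by the amplification argument noted above.
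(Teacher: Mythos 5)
Your proof is correct and follows essentially the same route as the paper's: the equal traces of the $p_v$ yield a system of matrix units $\{\ep_{v,w}=w_vw_w^*\}$ implementing the equivalences $p_v\sim p_o$, Proposition \ref{prop:vna}.(2) identifies each corner $p_vSp_w$ with $T(o)$, and the resulting normal isomorphism carries $T=\bigoplus_{v}T(v)$ onto the diagonal $T(o)\ootimes\ell^\infty(V^+)$. Your explicit verification that $\mu_V$ is constant on $V^+$ (hence $Tr(p_v)=Tr(p_o)$) spells out a step the paper leaves implicit.
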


\begin{proof}
By Proposition \ref{prop:vna}, $Tr(p_v)=Tr(p_o)$ for any $v\in V^+.$
Since $S$ is a factor, there exists a \SMU\ $\{\ep_{v,w}:v,w\in V^+\}\subset S$ such that $\ep_{v,w}\ep_{v,w}^*=p_v$ and $\ep_{v,w}^*\ep_{v,w}=p_w$ for any $v,w\in V^+.$
Consider $v\in V^+$ and the map $\beta_v:T(o)\loriar S, x\longmapsto \ep_{v,o} x \ep_{o,v}$.
Since $p_vSp_v=T(v)$, we have that the range of $\beta_v$ is contained in $T(v).$
Observe that $\beta_v$ realizes an isomorphism of \VNA s from $T(o)$ onto $T(v)$ and its inverse is the map $\beta_v^{-1}: T(v)\loriar T(o),x\longmapsto \ep_{o,v} x \ep_{v,o}$.
Observe that $S$ is the weak closure of the vector space 
$\spann(x \ep_{v,w}: v,w\in V^+, x\in T(v)).$
Consider the densely defined map $\phi:S\loriar T(o)\ootimes B(\ell^2(V^+))$ defined as 
$\phi(x \ep_{v,w})=\beta_v^{-1}(x)\otimes e_{v,w}$ for any $v,w\in V^+,x\in T(v).$
This map is clearly a $*$-morphism and extends to a normal map from $S$ to $T(o)\ootimes B(\ell^2(V^+))$.
Since $S$ is a factor and $\phi$ is not identically equal to zero we have that $\phi$ is injective.
The range of $\phi$ is weakly dense since it contains the set $\{x\otimes e_{v,w}: x\in T(o),v,w\in V^+\}$.
This implies that $\phi$ is surjective.
Therefore, $\phi$ realizes an isomorphism of \VNA s.
If $x\in T$, then $\phi(x)=\sum_{v\in V^+} \beta_v^{-1}(xp_v)\otimes e_{v,v}$.
This implies that $\phi(T)$ is contained in $T(o)\ootimes \ell^\infty(V^+)$.
Furthermore, $\phi(T)$ is clearly dense in $T(o)\ootimes \ell^\infty(V^+).$
Therefore, $\phi(T)=T(o)\ootimes\ell^\infty(V^+).$
\end{proof}

Consider a subgroup $G<\AutP$ such that $\Ql:=\Pl^G$ is a \SPA.
In particular, $G$ acts transitively on $V^+$.
We identify $V^+$ and $G/G_o$.
Let $\sigma:G\loriar \Aut(S)$ be the action defined in Section \ref{sec:fixedpoint}.
Note that we have an action of $G_o\times G_o$ on the corner $T(o)$ given by $(g,h)\cdot a\otimes b^\op=\sigma_g(a)\otimes \sigma_h(b)^\op$ for any $g,h\in G_o,a,b\in M(o).$
We denote by $T(o)^{G_o\times G_o}$ the \FPS\ with respect to this action.
Consider the diagonal inclusion of groups $g\in G_o\longmapsto (g,g)\in G_o\times G_o$ and the corresponding inclusion of \VNA s $T(o)^{G_o\times G_o}\subset T(o)^{G_o}.$
Denote by $\Ql=\Pl^G$ the \FPS\ of $\Pl$ with respect to the action of $G.$

\begin{theo}\label{theo:SEIQ}
There exists a cocycle action $(\ga,u)$ of the \Hepa\ $(G,G_o)$ on the II$_1$ factor $T(o)$ such that $\ga_{g,1}(a\otimes b^\op)=\sigma_g(a)\otimes \sigma_g(b)^\op$ for any $g\in G, a,b\in M(o)$, and such that the \SEI\ of $\Ql$ is isomorphic to 
$$T(o)^{G_o\times G_o}\subset \text{vN}[T(o);G,G_o].$$
\end{theo}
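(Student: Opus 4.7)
The plan is to realize $S\cong T(o)\ootimes B(\ell^2(V^+))$ via Theorem \ref{theo:SEIP}, identify $V^+\cong G/G_o$ (the $G$-action on $V^+$ is transitive since $\Pl^G$ is a \SPA), and transport the $G$-action $\sigma$ through this isomorphism. The main claim is that the transported action has exactly the form of the action $\pi$ in Proposition \ref{prop:twisted}, arising from some cocycle action $(\gamma,u)$ of the \Hepa\ $(G,G_o)$ on $T(o)$. Once this structure is recognized, Proposition \ref{prop:twisted} gives $S^G\simeq\text{vN}[T(o);G,G_o]$; tracking the inclusion $T\subset S$ through Theorem \ref{theo:SEIP} and combining with Proposition \ref{prop:fixedpoint} (which identifies the \SEI\ of $\Ql$ with $M^G\vee (M^\op)^G\subset S^G$) then singles out the corner $T(o)^{G_o\times G_o}=M(o)^{G_o}\otimes (M(o)^\op)^{G_o}$ as the image of the \SEI.

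Concretely, I would fix matrix units $\{\epsilon_{v,w}:v,w\in V^+\}$ in $S$ with $\epsilon_{o,o}=p_o$ and the induced isomorphisms $\beta_v:T(o)\loriar T(v)$, $a\mapsto\epsilon_{v,o}a\epsilon_{o,v}$. For $g\in G$ and $v\in V^+$, the partial isometry $\sigma_g(\epsilon_{v,o})$ has source $p_{go}$ and range $p_{gv}$, hence decomposes uniquely as $U_{g,v}\,\epsilon_{gv,go}$ with $U_{g,v}:=\sigma_g(\epsilon_{v,o})\epsilon_{go,gv}\in U(T(gv))$. Substituting this into $\phi\circ\sigma_g\circ\phi^{-1}(a\otimes e_{v,w})=\phi(\sigma_g(\epsilon_{v,o}a\epsilon_{o,w}))$ and simplifying using the matrix-unit relations yields
$$\phi\circ\sigma_g\circ\phi^{-1}(a\otimes e_{v,w})=\gamma_{g,v}(a)\,u_{g,v,w}\otimes e_{gv,gw},\qquad a\in T(o),$$
where $\gamma_{g,v}=\operatorname{Ad}(\beta_{gv}^{-1}(U_{g,v}))\circ\beta_{go}^{-1}\circ\sigma_g|_{T(o)}\in\Aut(T(o))$ and $u_{g,v,w}\in U(T(o))$ is built analogously from $U_{g,v}$, $U_{g,w}$ and the matrix units. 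The six cocycle axioms from Section \ref{sec:Heckepair} then reduce to $\sigma_{gh}=\sigma_g\sigma_h$, $\sigma_g(p_v)=p_{gv}$, and manipulation of matrix-unit products; this step is routine but lengthy.

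For the normalization $\gamma_{g,1}(a\otimes b^\op)=\sigma_g(a)\otimes\sigma_g(b)^\op$, specializing to $v=o$ yields $U_{g,o}=\sigma_g(p_o)\,p_{go}=p_{go}$, so that $\gamma_{g,1}=\beta_{go}^{-1}\circ\sigma_g|_{T(o)}$; since $\sigma$ preserves both planar subalgebras $M$ and $M^\op$, this map acts as $\sigma_g$ on each tensor factor under the canonical identifications $M(go)\simeq M(o)$, $M(go)^\op\simeq M(o)^\op$. Applying Proposition \ref{prop:twisted} to $(\gamma,u)$ gives $S^G\simeq\text{vN}[T(o);G,G_o]$; under this isomorphism, the subalgebra $T(o)\ootimes\ell^\infty(V^+)\subset T(o)\ootimes B(\ell^2(V^+))$ corresponds via Theorem \ref{theo:SEIP} to $T^G$ and in turn to $T(o)^{G_o}$ via Proposition \ref{prop:twisted}; restricting further to $M^G\vee (M^\op)^G$ isolates $M(o)^{G_o}\otimes (M(o)^\op)^{G_o}=T(o)^{G_o\times G_o}$. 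The main obstacle is choosing matrix units so that each $\beta_v$ respects the tensor splitting $T(v)=M(v)\otimes M(v)^\op$: because $p_o$ and $p_v$ lie in distinct central summands of $M$, no such partial isometry lives inside $M$ alone. The fix is to define $\beta_v$ abstractly as $\sigma_{s(v)}|_{T(o)}$ for a section $s:V^+\loriar G$ (automatically tensor-preserving, since $\sigma$ preserves $M$ and $M^\op$) and take $\epsilon_{v,o}$ to be any partial isometry in the factor $S$ implementing it, absorbing the residual ambiguity into $u$.
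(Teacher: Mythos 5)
Your proposal follows essentially the same route as the paper: transport $\sigma$ through the isomorphism $\phi$ of Theorem \ref{theo:SEIP}, read off the cocycle action $(\gamma,u)$ from the fact that $\pi_g=\phi\circ\sigma_g\circ\phi^{-1}$ permutes the blocks $T(o)\otimes e_{v,w}$ and the diagonal projections $1\otimes e_{v,v}$, and then combine Proposition \ref{prop:twisted} with Proposition \ref{prop:fixedpoint} to identify the corner $T(o)^{G_o\times G_o}$. The one point where you diverge --- redefining $\beta_v$ as $\sigma_{s(v)}|_{T(o)}$ and asking for a partial isometry of $S$ implementing it --- is both slightly off (an isomorphism between two corners of a factor is implemented by a partial isometry only up to an automorphism of the corner, so such an $\epsilon_{v,o}$ need not exist) and unnecessary: the identification of $M^G\vee(M^\op)^G$ with $T(o)^{G_o\times G_o}$ is read off entirely at the corner $p_o$, where $\beta_o=\mathrm{id}$, and for $g\in G_o$ one gets $\gamma_{g,1}=\sigma_g|_{T(o)}$, which visibly preserves the tensor factors $M(o)$ and $M(o)^\op$.
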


\begin{proof}
The map $g\in G\longmapsto go\in V^+$ is surjective since the action is transitive.
It realizes a bijection from $G/G_o$ onto $V^+.$
Denote by $\lGGor$ a \SR\ of $G/G_o.$
By Theorem \ref{theo:SEIP}, the \SEI\ of $\Pl$ is isomorphic to $T(o)\ootimes \ell^\infty(V^+)\subset T(o)\ootimes B(\ell^2(V^+))$ via the map $\phi:S\loriar T(o)\ootimes B(\ell^2(V^+))$ that satisfies $\phi(a\ep_{v,w})=\ep_{o,v} a\ep_{v,o}\otimes e_{v,w}$ for any $v,w\in V^+, a\in T(v).$
We have that the group action 
$$\pi:G\loriar \Aut(T(o)\ootimes B(\ell^2(V^+)),g\longmapsto \phi\circ\sigma_g\circ\phi^{-1}$$ 
satisfies that
\begin{equation}\label{equa:pi}
\pi_g(T(o)\otimes e_{v,w})=T(o)\otimes e_{gv,gw} \text{ and } \pi_g(1\otimes e_{v,v})=1\otimes e_{gv,gv} \text{ for any } g\in G, v,w \in V^+.
\end{equation}
This implies that for any $g\in G, v,w\in V^+$ there exists a unitary $u_{g,v,w}\in U(T(o))$ such that $\pi_g(1\otimes e_{v,w})=u_{g,v,w}\otimes e_{gv,gw}.$
Furthermore, for any $a\in T(o), v\in V^+, g\in G$ there exists $\ga_{g,v}(a)\in T(o)$ such that $\pi_g(a\otimes e_{v,v})=\ga_{g,v}(a)\otimes e_{gv,gv}.$
Since $\pi_g$ is an automorphism and $\pi_g(T(o)\otimes e_{v,v})=T(o)\otimes e_{gv,gv}$ we necessarily have that $\ga_{g,v}$ is an automorphism of $T(o)$ for any $g\in G, v\in V^+.$
From \eqref{equa:pi} and the fact that $\pi$ is a group action we can deduce that $(\ga,u)$ is a cocycle action of $(G,G_o)$ on the II$_1$ factor $T(o).$
Let $\vNTGGo$ be the \CrP\ \VNA\ associated to the cocycle action $(\ga,u)$ of the \Hepa\ $(G,G_o)$ on the II$_1$ factor $T(o).$
By Proposition \ref{prop:twisted}, we obtain that the inclusion $T(o)^{G_o}\subset \vNTGGo$ is isomorphic to the inclusion \begin{equation}\label{equa:SEI}
\{\sum_{s\in\lGGor} \sigma_s(x) \otimes e_{so,so}:x\in T(o)^{G_o}\}\subset (T(o)\ootimes B(\ell^2(V^+))^G,
\end{equation}
where $(T(o)\ootimes B(\ell^2(V^+))^G$ is the \FPS\ under the action of $\pi$.
Consider the isomorphism $\phi:S\loriar T(o)\ootimes B(\ell^2(V^+)$ of the proof of Theorem \ref{theo:SEIP}.
The \SEI\ of $\Ql$ is isomorphic to $M^G\vee (M^\op)^G\subset S^G$ by Proposition \ref{prop:fixedpoint}.
We have that $\phi(S^G)=(T(o)\ootimes B(\ell^2(V^+))^G$.
Observe that $\phi(M^G)=\{\sum_{s\in\lGGor} \sigma_s(a)\otimes 1\otimes e_{so,so}:a\in M(o)^{G_o}\}$ and $\phi((M^\op)^G)=\{\sum_{s\in\lGGor} 1\otimes (\sigma_s(b))^\op\otimes e_{so,so}:b\in M(o)^{G_o}\}$.
This last observation together with the characterization of the image of $T(o)^{G_o}$ given in \eqref{equa:SEI} imply that the \SEI\ of $\Ql$ is isomorphic to the inclusion $T(o)^{G_o\times G_o}\subset \text{vN}[T(o);G,G_o].$
\end{proof}

\subsection{Examples}\label{sec:examples}

We present examples of \SPA s for which we can describe their \SEI\ via our last theorem.

\subsubsection{Diagonal subfactors}

The \SEI\ of a diagonal \SF\ is known to be isomorphic to the (twisted) \CrP\ of a II$_1$ factor by a group \cite[Section 3]{Popa_symm_env_T}.
We give here a new proof of this fact which is a particular case of Theorem \ref{theo:SEIQ}.
Consider a II$_1$ factor $L$ and a finite set $\{g_1,\cdots,g_n\}$ of outer automorphisms of $L$.
Denote by $G$ the subgroup of the outer automorphism group Out$(L)$ generated by $\{g_1,\cdots,g_n\}$.
Let $\NM$ be the \SF\ equal to the inclusion $\{ \sum_{i=1}^{n+1} \alpha_{g_i}(x)\otimes e_{i,i}:x\in L\}\subset L\otimes \mathcal M_{n+1}(\C)$, where $\{e_{i,j}:i,j=1,\cdots n+1\}$ is the usual \SMU\ of the type I$_{n+1}$ factor $\mathcal M_{n+1}(\C)$ and $g_{n+1}=1$.
This subfactor is called a diagonal subfactor.
By \cite{Bisch_Das_Ghosh_Diagonal,Burstein_BGPA}, the \SPA\ of $\NM$ can be described as the fixed point \PA\ of a \BGPA\ as follows.
Consider the bipartite graph $\Ga$, where $V^+=\{v^+_g:g\in G\}$ is a copy of $G$ and $V^-=\{v^-_g:g\in G\}$ is another copy of $G$.
For any $g\in G$ and $1\leqslant i\leqslant n+1$ there is an edge from $v_g^+$ to $v^-_{gg_i}.$
Let $\mu$ be the weight of $\Ga$ that assigns $1$ to any edge and let $\Pl$ be the \BGPA\ associated to $(\Ga,\mu).$
The group $G$ acts on the weighted graph $(\Ga,\mu)$ by left multiplication.
The \FPPA\ $\Ql=\Pl^G$ is isomorphic to the \SPA\ of $\NM$.
For any even vertex of $\Ga$, the subgroup of $G$ that fixes this vertex is trivial.
Therefore, by Theorem \ref{theo:SEIQ} and Remark \ref{rem:CP}, there exists a II$_1$ factor $A$ and a trace-preserving cocycle action of $G$ on $A$ such that the \SEI\ of $\Ql$ is isomorphic to $A\subset A\rtimes G.$

\subsubsection{Bisch-Haagerup subfactors}

Consider a II$_1$ factor $L$ and two finite subgroups $H,K\subset \text{Out}(L).$
Assume that the intersection $H\cap K$ is the trivial group.
Denote by $G$ the subgroup of Out$(L)$ generated by $H$ and $K.$
Assume that there exists a group morphism $\alpha:G\loriar Aut(L)$ which is a lift of the inclusion $G\subset Out(L).$
The subfactor $L^K\subset L\rtimes H$ is called a \biha-\SF\ \cite{Bisch_Haagerup_composition_subfactors}.
By \cite{Bisch_Das_Ghosh_BH_pa,Burstein_BGPA}, its \SPA\ is isomorphic to the \FPS\ of a \BGPA\ as follows.
Consider the bipartite graph $\Ga$ where $V^+=\{v_{gH}: gH\in G/H\}$ is a copy of the coset space $G/H$ and $V^-=\{v_{gK}:gK\in G/K\}$ is a copy of the coset space $G/K$.
The set of edges $C_1=\{e_g:g\in G\}$ is equal to a copy of $G$ where $e_g$ is an edge between $v_{gH}$ and $v_{gK}$.
Let $\mu$ be the weight on $\Ga$ that assigns $\vert K\vert^{1/2}/\vert H\vert^{1/2}$ to any edge in $C_1^+.$ 
Let $\Pl$ be the \BGPA\ associated to $(\Ga,\mu).$
The group $G$ acts by left multiplication on $(\Ga,\mu)$ and the \FPS\ $\Ql=\Pl^G$ is isomorphic to the \SPA\ of the \SF\ $L^K\subset L\rtimes H$.
Consider the even vertex $v_H$.
The subgroup of $G$ that fixes $v_H$ is equal to $H$.
By Theorem \ref{theo:SEIQ}, there exists a II$_1$ factor $A$ and a cocycle action of the \Hepa\ $(G,H)$ on $A\ootimes A^\op$ such that $H$ acts on $A$ and such that the \SEI\ of $\Ql$ is isomorphic to $A^H\ootimes (A^\op)^H\subset \text{vN}[A\ootimes A^\op;G,H].$

\subsubsection{Binary trees and subfactors}

Consider two non-zero natural numbers $r^+$ and $r^-$.
Let $\Ga=T(r^\pm)$ be the infinite bipartite tree where any vertex $v\in V^\pm$ has degree $r^\pm.$
Define the weight $\mu$ such that $\mu(a)=\sqrt{r^-/r^+}$ for any $a\in C_1^+.$
We have that the weighted graph $(\Ga,\mu)$ satisfies all the assumptions of Section \ref{sec:preliminaries} and has modulus $\delta$ equal to $\sqrt{r^+r^-}.$
Let $\Pl$ be the \BGPA\ associated to $(\Ga,\mu).$
The group $\Aut(\Ga,\mu)$ is equal to the automorphism group of the bipartite graph $G=\Aut(\Ga)$ which acts transitively on the even and odd vertices and on the edges.
Let $o$ be an even vertex of $\Ga$ and $G_o$ the subgroup of $G$ that fixes this vertex.
The group $G_o$ is an infinite group.
Let $\Ql$ be the \FPPA\ under the action of $G$.
It is an irreducible \SPA.
By Theorem \ref{theo:SEIQ}, there exists a II$_1$ factor $A$ and a cocycle action of the \Hepa\ $(G,G_o)$ on $A\ootimes A^\op$ such that $G_o$ acts on $A$ and such that the \SEI\ of $\Ql$ is isomorphic to $A^{G_o}\ootimes (A^\op)^{G_o}\subset \text{vN}[A\ootimes A^\op;G,G_o].$

\subsection*{Acknowledgement}
Part of this work was done when the author was visiting the Institute Monsieur Matthieu in Paris during Summer 2015.
He gratefully acknowledges the kind hospitality he received.
The author expresses his gratitude to Cyril Houdayer and Jesse Peterson for very valuable comments and for constant support and encouragement of Dietmar Bisch and Vaughan Jones.
The author was partially supported by NSF Grant DMS-1362138.

\bibliographystyle{alpha}

\end{document}